\newcommand{\N}{\mathbb N}
\newcommand{\Z}{\mathbb Z}
\newcommand{\Q}{\mathbb Q}
\newcommand{\R}{\mathbb R}
\newtheorem*{lem}{Lemma}
\newtheorem*{props}{Proposition}
\newtheorem{theorem}{Theorem}[section]
\newtheorem{lemma}[theorem]{Lemma}
\newtheorem{cor}[theorem]{Corollary}
\newtheorem{prop}[theorem]{Proposition}
\theoremstyle{remark}
\newtheorem{remark}[theorem]{Remark}
\theoremstyle{definition}
\newtheorem{definition}[theorem]{Definition}
\DeclareMathOperator*{\supp}{supp}
\DeclareMathOperator*{\dist}{dist}
\DeclareMathOperator*{\dom}{dom}
\DeclareMathOperator*{\inte}{int}
\DeclareMathOperator*{\id}{id}
\DeclareMathOperator*{\ev}{ev}
\DeclareMathOperator*{\EV}{ev}
\let\phi=\varphi
\let\e=\varepsilon
\begin{document}

\title[Dual solutions for Lorentzian cost functions]{On the existence of dual solutions for Lorentzian cost functions}
\author{Martin Kell}
\address{Martin Kell,\newline\indent Fachbereich Mathematik, Universit\"at T\"ubingen, 
\newline\indent Auf der Morgenstelle 10, 72076 T\"ubingen, Germany}
\email{martin.kell@math.uni-tuebingen.de}
\author{Stefan Suhr}
\address{Stefan Suhr\newline\indent Ruhr-Universit\"at Bochum, Fakult\"at f\"ur Mathematik\newline\indent
Geb\"aude NA 4/33, D-44801 Bochum, Germany}
\email{stefan.suhr@ruhr-uni-bochum.de}

\begin{abstract}
The dual problem of optimal transportation in Lorentz-Finsler geometry is studied. It is shown that in general no solution exists even in the presence
of an optimal coupling. Under natural assumptions dual solutions are established. It is further shown that the existence of a dual solution implies that the 
optimal transport is timelike on a set of full measure. In the second part the persistence of absolute continuity along an optimal transportation under obvious 
assumptions is proven and a solution to the relativistic Monge problem is provided.
\end{abstract}
\date{\today}
\maketitle

\section{Introduction}

The theory of optimal transportation on Riemannian manifolds has revolutionized Riemannian geometry during the last decade 
with its characterization of lower bounds on the Ricci curvature in terms of optimal transport and the formulation of synthetic Ricci curvature for 
metric measure spaces. Einstein's field equations, the central equations of general relativity, are equations for the Ricci curvature 
of a Lorentzian metric. Thus the prospect of developing generalized notions of spacetimes and solutions to the Einstein field equations
readily motivates a theory of optimal transportation in Lorentzian geometry. The works \cite{EcksteinMiller,mccann18,Miller,suhr16} are 
first steps in this direction, with the work by McCann giving a first characterization of lower Ricci curvatue bounds for globally hyperbolic spacetimes. 
Optimal transportation in the context of special relativity was proposed in \cite{brenier} and studied in \cite{berpue,BPP,LPZ}. 

The present theory is formulated for globally hyperbolic Lorentz-Finsler spacetimes. See Section \ref{results} and \cite{besu17} for definitions and properties. 
The cost function in Lorentz-Finsler geometry is the negative of the time separation, or Lorentzian distance function, for future causally related points and
extended by $\infty$ to non-future causally related points. Because of the distance-like character of this cost function the problem is a relativistic
version of the original Monge problem. This ensures that optimal couplings of finite cost transport along future pointing causal geodesics. 
The non-finiteness, non-Lipschitzity and the discontinuity of the cost function at the boundary of $J^+$ cause additional difficulties, though. 

This article is a continuation of the work \cite{suhr16} by the second author on optimal transportation in Lorentz-Finsler geometry. 
The first major result, Theorem \ref{thm1}, focuses on the existence of solutions to the dual problem of Lorentzian optimal transportation, also known as the 
{\it dual Kantorovich problem}, see \cite{villani}. In \cite{suhr16} the second author gave natural conditions on the marginals to obtain 
optimal couplings and the weak Kantorovich duality. Here the weak Kantorovich duality says that the ``$\inf$-$\sup$-equality'' holds.
No statement on the existence of solutions was made, though. The dual problem does in general not admit a solution as Section 
\ref{1-dim-counterexample} shows. The problem lies on the lightcones. In general only a negligible part of mass is allowed to be transported along lightlike 
geodesics if a solution is to exist. To circumvent the underlying phenomenon the condition of {\it strict timelikeness} is introduced in 
Definition \ref{defstricttime}. Theorem \ref{thm1} then shows that dual solutions exists if the marginals satisfy the strict timelikeness condition
and at least one marginal has connected support. The conditions are met on a weakly dense subset of pairs of measures by Corollary \ref{cor1}.
Theorem \ref{thm1} generalizes results in \cite{berpue,BPP,mccann18}. The condition of strict timelikeness is related to parts of the definition
of {\it $q$-separated measures} in \cite{mccann18}. Conversely the existence of a dual solution necessitates that only a negligible part of mass 
is transported along lightlike geodesics, see Theorem \ref{thm2} the second major result of the present paper. A related result in special 
relativity is \cite[Theorem C]{BPP}.

It should be noted that both the existence and non-existence of dual solutions adapt to the Lorentzian cost with $q\in(0,1)$
as studied in \cite{EcksteinMiller,mccann18}. Indeed, the proof of Theorem \ref{thm1} makes only use of the causality structure.
In Example \ref{1-dim-counterexample} and Theorem \ref{thm2} the adjusted upper bound is $-\frac{C}{2} n^{1-\frac{q}{2}}$ and one may 
replace Lemma \ref{DKP-interpolation} by McCann's result on starshapedness of $q$-separated measures \cite[Proposition 5.5]{mccann18}.

An important question for Lorentzian optimal transportation is whether the interpolation measures of an optimal
transport are absolutely continuous with respect to the volume measure if at least one marginal is absolutely continuous. 
Theorem \ref{thm4}, the third major result, shows that intermediate measures are absolutely continuous if one marginal is 
absolutely continuous and the other marginal is concentrated on an achronal set. This result seems optimal also in the 
non-relativistic setting, as the non-uniqueness of optimal couplings and interpolation measures usually prevent intermediate 
measures to be absolutely continuous. However, the optimal couplings constructed from the solution of the relativistic Monge
problem can be used to show that there are indeed absolutely continuous intermediate measure though they are non-unique
even assuming that transport is along time-affinely parametrized geodesics.  We emphasize that the proof of Theorem \ref{thm4}
does not rely on the Lipschitz regularity of the transport directions as e.g. in \cite{bebu2}, since Lipschitz regularity is not available, see \cite{suhr16}.

We remark that the synthetic proof of existence of optimal transport maps adapts easily to Lorentzian cost
functions with $q\in (0,1)$. Indeed, excluding lightlike geodesics one
may parametrized geodesics by arclength. Then the non-branching property (Lemma \ref{lem:strong non-branching}) and
a weak measure contraction property (Lemma \ref{L1}) or alternatively the $(K,N)$-convexity of the entropy as 
obtained in \cite{mccann18} are for example sufficient to follow mutatis mutandis the proof of Cavalletti-Huesmann \cite{CH2015NBTrans}.

The last major result, Theorem \ref{thm5}, provides a solution to the relativistic Monge problem. It is shown that there exists an optimal transport map between any two causally related measures whenever the first measure is 
absolutely continuous with respect to any volume form of the differentiable manifold. If the second measure is 
concentrated on an achronal set this was already proven by the second author, see \cite[Theorem 2.12]{suhr16} 
which also contains a uniqueness statement. Note, however, the existence proof in this article is independent 
of \cite{suhr16} and only relies on a non-branching property of time-affinely parametrized geodesics, 
see Lemma \ref{lem:strong non-branching} below. Uniqueness then follows using \cite[Proposition 3.21]{suhr16} 
which can be an seen as a stronger non-branching property that is related to the volume form.

The article is organized as follows: In Section \ref{results} the setting is introduced and the main results are formulated. In Section \ref{examples}
two examples are given. One example shows that not all pairs of measures with an optimal coupling admit a solution to the dual problem. 
The second example shows that the dual solution does not need to be Lipschitz, i.e. the optimal transport is not bounded away from the lightcones
even though there exists a strictly timelike coupling. Finally Section \ref{proofs} contains the proofs of all results.

{\it Acknowledgements:} The second author is partially supported by the SFB/TRR 191: Symplectic Structures in Geometry, Algebra and Dynamics.

\section{results}\label{results}

Let $M$ be a smooth manifold. Throughout the article one fixes a complete Riemannian metric $h$ on $M$, though local changes to the metric will 
be allowed. Consider a continuous function $\mathbb{L}\colon TM\to \R$, smooth on $TM\setminus T^0M$ (here $T^0M$ denotes the zero section in $TM$) and
positive homogenous of degree $2$ such that the second fiber derivative is non-degenerate with index $\dim M-1$. Let $\mathcal{C}$ be a causal structure of 
$(M,\mathbb{L})$, see \cite{suhr16}, and define the Lagrangian $L$ on $TM$ by setting 
$$L(v):=\begin{cases}
-\sqrt{\mathbb{L}(v)},& \text{ for }v\in \mathcal{C},\\
\infty,&\text{ otherwise.}
\end{cases}$$
The pair $(M,L)$ is referred to as a {\it Lorentz-Finsler manifold}. 

One calls an absolutely continuous curve $\gamma\colon I\to M$ {\it ($\mathcal{C}$-)causal} if $\dot\gamma\in \mathcal{C}$ for almost all $t\in I$.
Note that this condition already  implies that the tangent vector is contained in $\mathcal{C}$ whenever it exists. 

Denote with $J^+(x)$ the set of points $y\in M$ such that there exists a causal curve $\gamma\colon [a,b]\to M$ with $\gamma(a)=x$ and $\gamma(b)=y$. 
Two points $x$ and $y$ will be called {\it causally related} if $y \in J^+(x)$. Note that this relation is in general asymmetric. Define the set
$$J^+:=\{(x,y)\in M\times M|\; y\in J^+(x)\}$$  
and $J^-(y):\{x\in M|\; y\in J^+(x)\}$.

A Lorentz-Finsler manifold is said to be {\it causal} if it does not admit a closed causal curve, i.e. $J^+\cap \triangle =\emptyset$ for $\triangle:=\{(x,x)|\; x\in M\}$.

\begin{definition}
A causal Lorentz-Finsler manifold $(M,L)$ is {\it globally hyperbolic} if the sets $J^+(x)\cap J^-(y)$ are compact for all $x,y\in M$. 
\end{definition}

By \cite{besu17} every globally hyperbolic Lorentz-Finsler spacetime admits a diffeomorphism (called a {\it splitting}) $M\cong \R\times N$ such that the 
projection $\tau\colon \R\times N\to \R$, $(\theta,p)\mapsto \theta$ satisfies 
$$-d\tau(v) \le \min \{L(v),-|v|\}$$ 
for all $v\in \mathcal{C}$. In the following one fixes a splitting $\tau$ and refers to it as a {\it time function}. Note that though the proofs below use a particular highly 
non-unique time function, the existence and uniqueness results do not depend on the choice of such a function.  

Define the {\it Lagrangian action (relative to $L$)} of a absolutely continuous curve $\gamma\colon [a,b]\to M$:
$$\mathcal{A}(\gamma):=\int_a^b L(\dot\gamma)dt \in \R\cup\{\infty\}$$
Note that $\mathcal{A}(\gamma)\in \R$ if and only if $\gamma$ is causal. The following result is proven in the same fashion as in the Lorentzian case, see \cite{suhr16}.

\begin{prop}\label{P1}
Let $(M,L)$ be globally hyperbolic. Then for every pair $(x,y)\in J^+$ there exists a minimizer of $\mathcal{A}$ with finite 
action connecting the two points. The minimizer $\gamma$ solves the Euler-Lagrange equation of $\mathbb{L}$ up to monotone reparametrization and one 
has $\dot\gamma\in\mathcal{C}$ everywhere.
\end{prop}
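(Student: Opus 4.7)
The plan is to mirror the Avez--Seifert argument for globally hyperbolic Lorentzian manifolds, substituting the auxiliary Riemannian metric $h$ for the usual compactness tool and the positively $1$-homogeneous integrand $L=-\sqrt{\mathbb{L}}$ for the length element. First I would normalize every candidate causal curve $\gamma$ from $x$ to $y$ by the time function $\tau$: the bound $-d\tau(v)\le-|v|_h$ on $\mathcal{C}$ yields $d\tau(v)\ge|v|_h$, so $\tau\circ\gamma$ is nondecreasing and strictly so where $\dot\gamma\neq 0$. Thus every nonconstant candidate can be reparametrized on $[\tau(x),\tau(y)]$ so that $\tau\circ\gamma=\id$, and the same bound forces $|\dot\gamma|_h\le 1$ a.e. All such reparametrizations remain inside the compact set $K:=J^+(x)\cap J^-(y)$ and form an equi-Lipschitz family with respect to $h$.

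Next I would use this compactness to extract a minimizer. On the compact set $\mathcal{C}\cap\{v\in TK:|v|_h\le 1\}$ the continuous function $L$ is bounded, giving a uniform bound $|\mathcal{A}(\gamma)|\le C_K(\tau(y)-\tau(x))$, so finiteness of the infimum is immediate from the nonemptiness of the candidate set (guaranteed by $(x,y)\in J^+$). Let $\gamma_n$ be a minimizing sequence; Arzel\`a--Ascoli produces a uniform subsequential limit $\gamma$, and weak convergence $\dot\gamma_n\rightharpoonup\dot\gamma$ in $L^1$ combined with closedness and convexity of the fiberwise cones $\mathcal{C}_p$ forces $\dot\gamma\in\mathcal{C}$ a.e. Since $\sqrt{\mathbb{L}}$ is concave on each future causal cone (equivalently, $L$ is convex in the fiber variable), Tonelli's lower semicontinuity theorem gives $\mathcal{A}(\gamma)\le\liminf\mathcal{A}(\gamma_n)$, so $\gamma$ realizes the infimum.

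For the Euler--Lagrange assertion I would exploit the reparametrization invariance of $\mathcal{A}$ coming from the $1$-homogeneity of $L$, choosing a parameter for which $\mathbb{L}(\dot\gamma)\equiv c$ is constant. Whenever $c>0$, $\dot\gamma$ lies in the open timelike interior of $\mathcal{C}$ where $L$ is smooth, and a first-variation argument against compactly supported smooth variations shows that $\gamma$ solves the Euler--Lagrange equation of $L$; because $\mathbb{L}(\dot\gamma)$ is constant, this equation reduces to the Euler--Lagrange equation of the smooth fiberwise nondegenerate Lagrangian $\mathbb{L}$, yielding smoothness of $\gamma$. Conservation of $\mathbb{L}(\dot\gamma)$ along the resulting geodesic flow, together with continuity of $\dot\gamma$ and connectedness of $\mathcal{C}$, keeps $\dot\gamma\in\mathcal{C}$ at every parameter.

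The main obstacle I expect is the purely lightlike case $c=0$: there $L$ fails to be smooth along $\gamma$ and the energy reformulation degenerates. One would handle it either by perturbing the endpoints slightly to produce nearby timelike minimizers and passing to a limit, or by a direct variational argument using the boundary structure of $\mathcal{C}$ and the fact that lightlike pregeodesics of $\mathbb{L}$ are automatically action-minimizing for $L$; this is precisely the adaptation performed in the Lorentzian case of \cite{suhr16}, to which the proposition defers.
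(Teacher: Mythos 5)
Your overall strategy --- parametrizing candidates by $\tau$ to obtain an equi-Lipschitz family inside the compact set $J^+(x)\cap J^-(y)$, then Arzel\`a--Ascoli together with closedness/convexity of the cones and fiberwise convexity of $L$ (Tonelli lower semicontinuity) to produce a minimizer, then a first-variation argument --- is exactly the Avez--Seifert-type argument the paper has in mind when it states that the proposition is proven ``in the same fashion as in the Lorentzian case'' and defers to \cite{suhr16}; the paper prints no proof of its own. One small caveat: the fiberwise convexity of $L$ (equivalently, concavity of $\sqrt{\mathbb{L}}$ on the convex cone $\mathcal{C}_p$) is itself a consequence of the signature hypothesis on the second fiber derivative and is a nontrivial Lorentz--Finsler fact that should be cited rather than asserted.

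There is, however, one genuine gap in your Euler--Lagrange step. You ``choose a parameter for which $\mathbb{L}(\dot\gamma)\equiv c$ is constant'' and then split into the cases $c>0$ and $c=0$. A Lipschitz causal minimizer could a priori contain both timelike and null segments, and such a curve admits no parametrization with $\mathbb{L}(\dot\gamma)$ constant: on a null segment $\mathbb{L}(\dot\gamma)=0$ for every parametrization, while on a timelike segment it is positive. So your dichotomy silently assumes what must in fact be proved, namely that the minimizer has pure causal type; this issue is inseparable from the null case you defer, and your endpoint-perturbation fallback does not identify the \emph{given} minimizer as a pregeodesic (a limit of nearby timelike geodesics need not converge to it). The standard way to close this --- and the way the Lorentzian argument behind \cite{suhr16} proceeds --- is local: cover $\gamma$ by convex neighborhoods (available in the Lorentz--Finsler setting, cf. Minguzzi's work on convex neighborhoods cited in the bibliography), note that a global minimizer of $\mathcal{A}$ minimizes between any two nearby points on it, and that in a convex neighborhood the unique minimizer is the radial $\Phi^{\mathbb{L}}$-orbit; equivalently, a causal curve that is not a causal pregeodesic can be pushed up to a curve of strictly smaller action. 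Patching these local statements yields simultaneously the Euler--Lagrange property up to monotone reparametrization and $\dot\gamma\in\mathcal{C}$ everywhere, covering the timelike, null and a priori mixed situations uniformly.
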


The Euler-Lagrange equation of $\mathbb{L}$ defines a maximal local flow 
$$\Phi^\mathbb{L}\colon \mathbb{U}\to TM,$$
where $\mathbb{U}\subset \R\times TM$ is an open neighborhood of $\{0\}\times (TM\setminus T^0M)$. Note that $\mathcal{C}$ and $\partial \mathcal{C}$ are 
invariant under $\Phi^\mathbb{L}$. A curve $\gamma\colon I\to M$ is a {\it $\Phi^\mathbb{L}$-orbit} if it solves the Euler-Lagrange equation of $\mathbb{L}$, 
see \cite{suhr16}.

For a globally hyperbolic Lorentz-Finsler manifold $(M,L)$ define the {\it Lorentzian cost function} 
\begin{align*}
c_L\colon M\times M&\to \R\cup\{\infty\}\\
x,y&\mapsto\min\left\{\left.\mathcal{A}(\gamma)\right|\; \gamma\text{ connects $x$ and $y$}\right\}.
\end{align*}
It is immediate that $c_L$ is non-positive for causally related points and infinite otherwise. 

Define $\overline{\tau}\colon M\times M\to \R$, $(x,y)\mapsto \tau(y)-\tau(x)$. Let $\mathcal{P}_\tau(M\times M)$ be the space of of Borel probability measures 
$\pi$ on $M\times M$ such that $\overline{\tau}\in L^1(\pi)$. The {\it Lorentzian cost} is the functional 
$$\mathcal{P}_\tau(M\times M)\to \R\cup\{\infty\},\; \pi \mapsto \int c_Ld\pi.$$

The minimization problem for the Lorentzian cost is called the {\it Relativistic Monge-Kantorovich problem}:
Given two Borel probability measures $\mu_0$ and $\mu_1$ on $M$ find a minimizer of the Lorentzian cost 
among all Borel probability measures on $M\times M$ with first marginal $\mu_0$ and second marginal $\mu_1$. 
Any minimizer will be called an {\it optimal coupling} between $\mu_0$ and $\mu_1$.

Let $\mathcal{P}(M)$ denote the space of Borel probability measures on $M$. For a splitting $\tau\colon M\to \R$ define
$$\mathcal{P}_\tau^+(M):=\{(\mu_0,\mu_1)|\; \tau\in L^1(\mu_0)\cap L^1(\mu_1)\text{ and $\mu_0$, $\mu_1$ are 
$J^+$-related}\}
$$
where two probability measures are {\it $J^+$-related} (or just {\it causally related}) if there exists a 
coupling $\pi$ with $\pi(J^+)=1$. Note that if $\pi$ is a coupling of two $J^+$-related probability measures $\mu_0$ and $\mu_1$, then 
$(\mu_0,\mu_1)\in \mathcal{P}^+_\tau(M)$ if and only if $\pi \in \mathcal{P}_\tau(M\times M)$.

A function $\psi\colon M\to \R\cup\{\infty\}$ with $\psi\not\equiv \infty$ is {\it $c_L$-convex} if there exists a function $\zeta \colon M\to \R\cup\{\pm\infty\}$ such that
$$\psi(x)=\sup\left\{\zeta(y)-c_L(x,y)|\; y\in M\right\}$$ 
for all $x\in M$. The function 
\begin{align*}
\psi^{c_L}\colon M&\to \R\cup \{-\infty\}\\
y&\mapsto \inf \left\{ \psi(x)+c_L(x,y)|\; x\in M\right\}
\end{align*}
is called the {\it $c_L$-transform of $\psi$}. A pair $(x,y)\in M\times M$ belongs to the {\it $c_L$-subdifferential} $\partial_{c_L} \psi$ if 
$$\psi^{c_L}(y)-\psi (x)=c_L(x,y).$$

\begin{prop}[\cite{suhr16}]\label{P2a}
Let $(\mu,\nu)\in \mathcal{P}^+_\tau(M)$.  One has
$$\inf\left\{\left.\int c_{L}d\pi\right|\pi\text{ is a coupling of $\mu$ and }\nu\right\}= \sup\left\{\int_{M} \phi(y) d\nu(y)-\int_M \psi(x)d\mu(x)\right\}$$
where the supremum is taken over the functions $\psi\in L^1(\mu),\phi\in L^1(\nu)$ with $\phi(y)-\psi(x)\le c_L(x,y)$.
\end{prop}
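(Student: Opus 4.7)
The plan is to reduce Proposition \ref{P2a} to the classical Kantorovich duality theorem for non-negative lower semi-continuous costs by subtracting off the time potential. The easy inequality $\sup\le\inf$ is immediate: for every coupling $\pi$ of $\mu$ and $\nu$ and every admissible pair $(\psi,\phi)$ one integrates the pointwise bound $\phi(y)-\psi(x)\le c_L(x,y)$ against $\pi$; the integrals are well-defined because $\overline\tau\in L^1(\pi)$ for any coupling (since $\tau\in L^1(\mu)\cap L^1(\nu)$) and $c_L(x,y)\ge -\overline\tau(x,y)$ for $\pi$-almost all $(x,y)$. The difficulty in the reverse direction is that $c_L$ is not bounded below, jumps to $+\infty$ off $J^+$, and $M$ is non-compact, so standard duality theorems do not apply directly.

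To circumvent this I would introduce the modified cost
$$\tilde c(x,y):=c_L(x,y)+\overline\tau(x,y),$$
which takes values in $[0,+\infty]$. Non-negativity follows by integrating $-d\tau(v)\le L(v)$ along a minimizer from Proposition \ref{P1}, yielding $c_L(x,y)\ge \tau(x)-\tau(y)=-\overline\tau(x,y)$ on $J^+$. Global hyperbolicity combined with the limit-curve properties of causal minimizers shows that $c_L$, and hence $\tilde c$, is lower semi-continuous on $M\times M$. The assumption $(\mu,\nu)\in\mathcal{P}^+_\tau(M)$ provides a coupling $\pi_0$ with $\pi_0(J^+)=1$ and $\overline\tau\in L^1(\pi_0)$, which, together with $c_L\le 0$ on $J^+$, gives $\int\tilde c\,d\pi_0<\infty$.

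At this point Kantorovich duality for non-negative lower semi-continuous costs on Polish spaces (e.g.\ Villani's Theorem 5.10) applies directly to $\tilde c$ and yields
$$\inf_\pi\int\tilde c\,d\pi=\sup\left(\int\tilde\phi\,d\nu-\int\tilde\psi\,d\mu\right),$$
with $\tilde\phi\in L^1(\nu)$, $\tilde\psi\in L^1(\mu)$ subject to $\tilde\phi(y)-\tilde\psi(x)\le\tilde c(x,y)$. Setting $\phi:=\tilde\phi-\tau$ and $\psi:=\tilde\psi-\tau$ (still in the appropriate $L^1$ spaces) turns the constraint into $\phi(y)-\psi(x)\le c_L(x,y)$; likewise $\int\tilde c\,d\pi=\int c_L\,d\pi+\int\tau\,d\nu-\int\tau\,d\mu$. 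Both sides of the duality for $\tilde c$ therefore shift by the same finite constant $\int\tau\,d\nu-\int\tau\,d\mu$, and the identity for $c_L$ follows.

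The step I expect to be most delicate is verifying lower semi-continuity of $c_L$ on all of $M\times M$, particularly at boundary pairs where $c_L$ jumps to $+\infty$: this requires a limit-curve argument exploiting global hyperbolicity to ensure that a sequence of causal curves with uniformly bounded action subconverges, after reparametrization, to a causal curve whose action does not exceed the liminf. Once this and the sign-convention bookkeeping are settled, the rest is a direct invocation of the standard theorem.
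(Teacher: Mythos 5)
Your argument is correct in substance. Note first that the paper itself offers no proof of Proposition \ref{P2a}: it is quoted from \cite{suhr16}, so there is no in-paper argument to compare against; but your route is the expected one, and the definition of $\mathcal{P}^+_\tau(M)$ exists precisely to make it work. Indeed, your subtraction $\tilde c=c_L+\overline\tau\ge 0$ is equivalent to verifying the standard lower-bound hypothesis of Villani's Theorem 5.10, namely $c_L(x,y)\ge a(x)+b(y)$ with $a=\tau\in L^1(\mu)$ and $b=-\tau\in L^1(\nu)$ continuous, so one may even invoke that theorem for $c_L$ directly without introducing $\tilde c$; either way the shift of both sides by the finite constant $\int\tau\,d\nu-\int\tau\,d\mu$ is legitimate because $\overline\tau\in L^1(\pi)$ for \emph{every} coupling of $\mu$ and $\nu$, as you say. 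Two small points deserve to be made explicit: (1) the classical duality statement gives the supremum over bounded continuous pairs, and one passes to $L^1(\mu)\times L^1(\nu)$ pairs by combining it with your easy inequality $\sup\le\inf$ for integrable admissible pairs; (2) the lower semicontinuity of $c_L$, which you correctly isolate as the delicate step, follows from global hyperbolicity exactly along the lines you sketch — closedness of $J^+$ handles the jump to $+\infty$, and the limit-curve argument together with upper semicontinuity of the Lorentz--Finsler length (equivalently lower semicontinuity of $\mathcal{A}$ under $C^0$ limits of causal curves) handles pairs in $J^+$; this is established in \cite{suhr16} (the present paper freely uses continuity of $c_L$ on $J^+$). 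With these points filled in, your proof is complete.
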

Proposition \ref{P2a} shows 
that the {\it weak Kantorovich duality} holds.

\begin{definition}
Let $(\mu,\nu)\in \mathcal{}P^+_\tau(M)$. 
A $c_L$-convex function 
$$\psi\colon M\to \R\cup \{\infty\}$$ 
is a {\it solution to the dual Kantorovich 
problem (DKP) for $(\mu,\nu)$} if $\psi$ is $\mu$-almost surely finite  
and 
$$\psi^{c_L}(y)-\psi(x)=c_L(x,y)$$
$\pi$-almost surely for every optimal coupling $\pi$ of $\mu$ and $\nu$. 
\end{definition}

\begin{definition}[\cite{villani}]
A {\it dynamical coupling} of two probability measures $\mu_0$ and $\mu_1$ is a probability measure $\Pi$ on the space of continuous curves $\eta\colon [0,1]\to M$
such that $(\ev_{0})_\sharp\Pi=\mu_{0}$ and $(\ev_{1})_\sharp\Pi=\mu_{1}$. 
\end{definition}
Dynamical couplings in Lorentzian geometry have been studied in \cite{EcksteinMiller,Miller}.

\begin{definition}\label{defstricttime}
A pair $(\mu,\nu)$ of probability measures is {\it strictly timelike} if there exists a dynamical coupling $\Pi$ supported in the subspace of causal curves such that 
$(\partial_t\ev)_\sharp (\Pi\times \mathcal{L}|_{[0,1]})$ is locally bounded away from $\partial\mathcal{C}$ where $\partial_t\ev(\gamma,t):= \dot\gamma(t)$. 
\end{definition}

\begin{remark}
\begin{itemize}
\item[(1)] Recall that every causal curve admits a Lipschitz parameterization. Further the condition of strict timelikeness is convex in the sense that the set of strictly timelike 
pairs of measures is convex. 
\item[(2)] The condition of {\it strict timelikeness} generalizes the {\it supercritical speed} for {\it relativistic cost functions} in \cite{berpue,BPP,LPZ}.
It is further related to the condition of {\it $q$-separatedness} in \cite{mccann18}.
\end{itemize}
\end{remark}

\begin{theorem}[Existence of dual solutions]\label{thm1}
Let $(\mu,\nu)\in \mathcal{P}^+_\tau(M)$ be strictly timelike and assume that $\supp \mu$ is connected. Then the DKP for $(\mu,\nu)$ has a 
solution. More precisely for every optimal coupling $\pi$ there exists a $c_L$-convex function $\psi\colon M\to\R\cup\{\infty\}$ real-valued on $\supp\mu$ such that 
$\pi$-almost everywhere $\psi^{c_L}(y)-\psi(x)=c_L(x,y)$, i.e. $\supp\pi \subset \partial_{c_L}\psi$. 
\end{theorem}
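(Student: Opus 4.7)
My plan is to adapt the classical Rockafellar construction of a $c_L$-convex potential to the Lorentzian setting, where the cost takes the value $+\infty$ off $J^+$ and is discontinuous across its boundary. Fix an optimal coupling $\pi$; by standard optimal transport theory applied to couplings of finite cost, one obtains a Borel $c_L$-cyclically monotone subset $\Gamma\subset\supp\pi$ of full $\pi$-measure on which $c_L$ is finite. Choose a base point $(x_0,y_0)\in\Gamma$ and define
\begin{equation*}
\psi(x):=\sup\sum_{i=0}^{N}\bigl[c_L(x_i,y_i)-c_L(x_{i+1},y_i)\bigr],\qquad x_{N+1}:=x,
\end{equation*}
where the supremum runs over $N\geq 0$ and $(x_1,y_1),\dots,(x_N,y_N)\in\Gamma$. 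Cyclical monotonicity yields $\psi(x_0)=0$, and by construction $\psi$ is $c_L$-convex with $\supp\pi\subset\partial_{c_L}\psi$ as soon as $\psi$ is real valued on $\supp\mu$.

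The heart of the argument is therefore to show that $\psi\colon\supp\mu\to\R$ is real valued; this is exactly what fails in the counterexample of Section \ref{1-dim-counterexample}. I would use strict timelikeness to establish the following local reachability property: for every $x\in\supp\mu$ there exist an open neighborhood $U_x$ of $x$ and a point $y_x\in\supp\nu$ such that $U_x\subset\inte J^-(y_x)$, so that $c_L(\,\cdot\,,y_x)$ is continuous and bounded on $U_x$. Indeed, $\Pi$-typical curves have tangents in a fixed compact subset of $\inte\mathcal{C}$, hence $(\eta(0),\eta(1))\in\inte J^+$; openness of $\inte J^+$ together with a disintegration of $\Pi$ over $\ev_0$ then produces such an endpoint $y_x\in\supp\nu$ near any prescribed $x$. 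A symmetric statement holds at points of $\supp\nu$.

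To pass from local to global finiteness of $\psi$ I would exploit the connectedness of $\supp\mu$. Any $x\in\supp\mu$ can be linked to $x_0$ by a finite chain of overlapping neighborhoods $U_{z_0},\dots,U_{z_k}$ with $z_0=x_0$ and $x\in U_{z_k}$. Selecting transition points in each intersection and inserting pairs $(a,y_{z_j})\in\Gamma$ (which exist for $\pi$-a.e.\ choice by the support properties) into the Rockafellar sum at each step, together with the fact that $c_L$ is bounded on every $U_{z_j}\times\{y_{z_j}\}$, furnishes a uniform upper bound on the admissible sums defining $\psi(x)$; the lower bound is immediate from exhibiting a single chain that ends near $x$ and uses the local $y_{z_k}$. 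Integrability $\psi\in L^1(\mu)$ then follows from admissibility of the pair $(\psi,\psi^{c_L})$ together with the weak Kantorovich duality of Proposition \ref{P2a}, and the identity $\psi^{c_L}(y)-\psi(x)=c_L(x,y)$ on $\supp\pi$ is built into Rockafellar's construction.

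The main obstacle is the chaining step of the previous paragraph: the discontinuity of $c_L$ along $\partial J^+$ prevents any naive claim that every term $c_L(x_{i+1},y_i)$ in the Rockafellar sum is finite, so the transition points and auxiliary partners in $\Gamma$ must be selected precisely enough that all intermediate pairs lie in $J^+$ (and in fact in $\inte J^+$). It is exactly the uniform timelike margin off $\partial\mathcal{C}$ supplied by strict timelikeness, combined with the finite-chain argument enabled by the connectedness of $\supp\mu$, that makes this selection possible. Without strict timelikeness the counterexample of Section \ref{1-dim-counterexample} shows that the reachability obstruction becomes genuine and the dual problem has no solution.
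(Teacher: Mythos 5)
Your overall skeleton coincides with the paper's: the same Rockafellar-type potential built from chains in a cyclically monotone set of full $\pi$-measure, the observation that $\psi(x_0)=0$ and that the only real issue is finiteness of $\psi$ on $\supp\mu$, and the intention to use strict timelikeness plus connectedness at exactly that point. However, the step you describe as "selecting transition points in each intersection and inserting pairs $(a,y_{z_j})\in\Gamma$" contains a genuine gap, and it sits precisely where the difficulty of the theorem lies. The pairs entering the Rockafellar sum must belong to $\supp\pi$ for the \emph{optimal} coupling, and finiteness of a cross term $c_L(x'_{i+1},y'_i)$ requires the first coordinate of the \emph{next} admissible pair to lie in $J^-(y'_i)$. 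Your points $y_{z_j}$ are produced by the strictly timelike coupling (or merely lie in $\supp\nu$); their $\pi$-partners $a_j$ are completely uncontrolled and need not lie in any of your neighborhoods $U_{z_{j-1}}\cap U_{z_j}$, so the chain is forced to travel through these partners and your local covering argument gives no reason why the subsequent cross terms stay finite. One cannot repair this by choosing the transition point $a_{j+1}$ in the overlap and hoping its $\pi$-partner is $y_{z_{j+1}}$: the paper's second example shows that even for strictly timelike pairs the optimal coupling may transport arbitrarily close to $\partial J^+$, so the partners of points near a given $x$ need not causally dominate any fixed neighborhood. In short, chain-reachability through $\supp\pi$ is not a local property of $\supp\mu$, and the local-to-global passage via overlapping neighborhoods does not go through as stated.

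The paper closes this gap with a global measure-theoretic argument that your proposal lacks. Starting from a pair $(u,w)\in\supp\pi$ one defines the set $A\subset\supp\nu$ of endpoints reachable by finite chains $\{(u_i,w_i)\}\subset\supp\pi$ with $u_{i+1}\in J^-(w_i)\cap\supp\mu$, proves the invariance property $p_2\bigl(p_1^{-1}(J^-(A))\cap\supp\pi\bigr)=A$, and deduces $\nu(A)=\mu(J^-(A))$. This identity forces \emph{every} causal coupling of $(\mu,\nu)$ --- in particular the strictly timelike one --- to couple $J^-(A)$ with $A$; strict timelikeness then shows $J^-(A)\cap\supp\mu$ is locally uniformly bounded away from $\partial J^-(A)$, hence open and nonempty, and connectedness of $\supp\mu$ yields $J^-(A)\supset\supp\mu$ and thus $A=\supp\nu$. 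Only then do the finite chains in both directions exist which give $-\infty<\psi(x)<\infty$ on $\supp\mu$. Your local reachability statement (a neighborhood $U_x$ with $U_x\subset\inte J^-(y_x)$ for some $y_x\in\supp\nu$) is plausible, but without the identity $\nu(A)=\mu(J^-(A))$ there is no mechanism guaranteeing that these $y_x$ are reachable through $\supp\pi$-chains, which is what finiteness of $\psi$ actually requires. To complete your argument you would essentially have to reintroduce this global step.
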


The theorem generalizes \cite[Theorem 5.13]{berpue} and \cite[Theorem B]{BPP}. Note that Theorem \ref{thm1} is most likely optimal, as Theorem \ref{thm2} 
shows that dual solutions cannot exist whenever there are optimal couplings transporting a set of positive measure along $\partial\mathcal{C}$, the boundary 
of the lightcone.  A $1+1$-dimensional example of the non-existence of dual solution is provided in Section \ref{1-dim-counterexample}
below. 

\begin{cor}\label{cor1}
Every pair $(\mu_0,\mu_1)\in \mathcal{P}_\tau^+(M)$ can be approximated in the weak topology by a sequence $\{(\mu_0^n,\mu_1^n)\}_{n\in\N}\subset
\mathcal{P}_\tau^+(M)$ such that every pair $(\mu_0^n,\mu_1^n)$ admits a solution to the DKP.
\end{cor}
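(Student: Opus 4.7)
The plan is to build, for each $n$, a pair $(\mu_0^n,\mu_1^n)\in\mathcal{P}_\tau^+(M)$ converging weakly to $(\mu_0,\mu_1)$, strictly timelike, and with $\supp\mu_0^n$ connected; Theorem~\ref{thm1} then supplies a DKP solution for each $n$. The construction proceeds in two stages: first, push the first marginal slightly into the past and the second slightly into the future along a global strictly timelike vector field, so that the coupling is supported on chronologically related pairs; second, enlarge the support of the first marginal to a connected set by adding a vanishing amount of mass on a connected bridge.

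Fix a coupling $\pi$ of $(\mu_0,\mu_1)$ with $\pi(J^+)=1$, and compact sets $K_n\subset J^+$ with $\pi(K_n)\uparrow 1$; set $\pi_n:=\pi|_{K_n}/\pi(K_n)$. From the splitting $M\cong\R\times N$ choose a smooth vector field $V$ with $V_x\in\inte\mathcal{C}$ for every $x\in M$, and let $\phi_s$ denote its flow. Define
\[
\tilde\pi_n:=(\phi_{-1/n}\times\phi_{1/n})_\sharp\pi_n,
\]
with marginals $\tilde\mu_0^n,\tilde\mu_1^n$. For any $(x,y)\in K_n$ the concatenation of the $V$-flow from $\phi_{-1/n}(x)$ to $x$, a minimizer from $x$ to $y$, and the $V$-flow from $y$ to $\phi_{1/n}(y)$ is a causal path with strictly timelike initial and terminal segments, so its endpoints are joined by a strictly timelike curve. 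By Proposition~\ref{P1} a minimizing geodesic $\gamma^n_{x,y}$ between them exists and must be strictly timelike, since a minimizer of the Lagrangian action between chronologically related endpoints cannot have a lightlike tangent (such a tangent admits a timelike variation producing a strictly more negative action). Parametrized on $[0,1]$ these curves form a dynamical coupling $\Pi_n$ of $(\tilde\mu_0^n,\tilde\mu_1^n)$. The main technical point is to verify that its velocity measure is locally bounded away from $\partial\mathcal{C}$: if not, there would exist $(x_k,y_k)\in K_n$ and $t_k\in[0,1]$ with $\gamma^n_{x_k,y_k}(t_k)$ in a fixed compact set $\Omega$ and $\dot\gamma^n_{x_k,y_k}(t_k)\to\partial\mathcal{C}$; compactness of $K_n$ together with the stability of action minimizers (Proposition~\ref{P1}) would yield a uniformly convergent subsequence whose limit is a minimizer between chronologically related endpoints carrying a lightlike tangent, contradicting the previous paragraph.

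Finally, since $\supp\tilde\mu_0^n$ is compact, pick a compact connected $C_n\supset\supp\tilde\mu_0^n$ (for instance a closed tubular neighborhood of a piecewise smooth arc meeting every connected component) and a probability measure $\lambda_n$ with $\supp\lambda_n=C_n$. Set
\[
\mu_0^n:=(1-1/n)\tilde\mu_0^n+(1/n)\lambda_n,\qquad \mu_1^n:=(1-1/n)\tilde\mu_1^n+(1/n)(\phi_{2/n})_\sharp\lambda_n,
\]
coupled by the corresponding convex combination of $\tilde\pi_n$ and $(\id,\phi_{2/n})_\sharp\lambda_n$, realized dynamically by appending to $\Pi_n$ the $V$-flow lines $t\mapsto\phi_{2t/n}(x)$ for $x\in C_n$. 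These added curves have velocity in $\inte\mathcal{C}$, so strict timelikeness of the full dynamical coupling is preserved; $\supp\mu_0^n=C_n$ is connected; and the perturbation has total variation $1/n\to 0$, so $(\mu_0^n,\mu_1^n)\to(\mu_0,\mu_1)$ weakly. Theorem~\ref{thm1} then applies to each $(\mu_0^n,\mu_1^n)$.
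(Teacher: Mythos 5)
Your construction is correct and is essentially the paper's own proof: perturb along the flow of a strictly timelike vector field to make the pair strictly timelike, mix in an $O(1/n)$ amount of mass with connected support (transported by the same flow) to make $\supp\mu_0^n$ connected, and invoke Theorem \ref{thm1}. The differences are cosmetic — you first restrict to compact sets and push both marginals rather than only the first, and you spell out the compactness/limit-of-minimizers argument for strict timelikeness that the paper leaves implicit.
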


\begin{proof}[Proof of Corollary {\ref{cor1}}]
Choose a vector field $X\in \Gamma(TM)$ with $L(X)<0$ and $d\tau (X)=1$. Denote with $\phi_t^X$ the flow of $X$. Then 
$$(\nu_0^n,\nu_1^n):=((\phi_{-t}^X)_\sharp \mu_0,\mu_1)\in \mathcal{P}_\tau^+(M)$$
is strictly timelike for all $n\in\N$. The measure $\nu_0^n$ can approximated by a measure $o^n$ with connected support and 
$$\supp \nu_0^n\subset \supp o^n.$$
Now 
$$(\mu_0^n,\mu_1^n):=\left(\frac{n-1}{n}\nu_0^n+\frac{1}{n}o^n, \frac{n-1}{n}\nu_1^n+\frac{1}{n}(\phi_1^X)_\sharp o^n\right)$$
satisfies the assumptions of Theorem \ref{thm1}.
\end{proof}
 
If $c_L(x,y)=0$ then by Proposition \ref{P1} there exists a lightlike $\Phi^\mathbb{L}$-orbit $\eta$, i.e. $L(\dot\eta)\equiv0$, which cannot be parametrized by 
``arclength'', i.e. $L(\dot \eta)\equiv 1$. In particular, such lightlike $\Phi^\mathbb{L}$-orbits do not admit a ``preferred affine parametrization'' in any sense. However, 
using the time function $\tau$ one may reparametrize every causal $\Phi^\mathbb{L}$-orbit as follows: Denote with $\Gamma$ 
the set of causal minimizers $\gamma\colon [0,1]\to M$ of $\mathcal{A}$ such that $d\tau(\dot\gamma)\equiv \text{const}(\gamma)$.
Elements of $\Gamma$ are called {\it time-affinely} parametrized geodesics.
For $(x,y)\in J^+$ consider the subspace 
$$\Gamma_{x\to y}:=\{\gamma\in \Gamma|\; \ev\nolimits_0(\gamma)=x,\; \ev\nolimits_1(\gamma)=y\}$$ 
where 
$$\EV\colon \Gamma\times [0,1]\to M,\; (\gamma,t)\mapsto \gamma(t)\text{ and }\ev\nolimits_t:=\ev(.,t).$$
Since $(M,L)$ is assumed to be globally hyperbolic one has $\Gamma_{x\to y} \ne \varnothing$.

\begin{definition}
A Borel measure $\Pi$ on $\Gamma$ is a {\it dynamical optimal coupling}  of $\mu_0:=(\ev_0)_\sharp\Pi$ and $\mu_1:=(\ev_1)_\sharp \Pi$ 
if $\pi:=(\ev_0,\ev_1)_\sharp \Pi$ is an optimal coupling between $\mu_0$ and $\mu_1$. 
\end{definition}

\begin{prop}[\cite{suhr16}]\label{dynoptcou}
Let $(\mu_0,\mu_1)\in \mathcal{P}^+_\tau(M)$. Then there exists a dynamical optimal coupling $\Pi$ 
for $\mu_0$ and $\mu_1$ with $\supp\Pi\subseteq \Gamma$.
\end{prop}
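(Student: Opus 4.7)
The plan is to obtain a static optimal coupling and then lift it to the path space $\Gamma$ by a measurable choice of time-affinely parametrized minimizers.

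First I would secure the existence of a static optimal coupling $\pi$ of $\mu_0$ and $\mu_1$. Since $\mu_0,\mu_1$ are $J^+$-related by hypothesis, the set of couplings with $\pi(J^+)=1$ is non-empty and, by global hyperbolicity together with the estimate $-d\tau(v)\le \min\{L(v),-|v|\}$, tight. The cost $c_L$ is lower semi-continuous on $J^+$: along any $(x_n,y_n)\to (x,y)$ in $J^+$ one picks a minimizer in $\Gamma_{x_n\to y_n}$, extracts an Arzelà--Ascoli limit in $\Gamma_{x\to y}$, and uses lower semi-continuity of $\mathcal{A}$ to conclude. Extending $c_L$ by $+\infty$ off $J^+$, the direct method then produces an optimal $\pi$ with $\int c_L\,d\pi\in\R$.

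Next I would equip $\Gamma$ with the compact-open topology, which makes it a Polish space, and consider the continuous map
$$e:=(\ev\nolimits_0,\ev\nolimits_1)\colon \Gamma\to M\times M,\qquad \gamma\mapsto (\gamma(0),\gamma(1)),$$
whose image is exactly $J^+$. Proposition \ref{P1} shows that each fiber $\Gamma_{x\to y}=e^{-1}(x,y)$ is non-empty: given a minimizer of $\mathcal{A}$ connecting $x$ to $y$, a monotone reparametrization makes $d\tau(\dot\gamma)$ constant and places it in $\Gamma_{x\to y}$. Global hyperbolicity confines all such curves to the compact set $J^+(x)\cap J^-(y)$ with uniformly bounded $h$-length, so each fiber is compact; moreover the graph of the multifunction $(x,y)\mapsto \Gamma_{x\to y}$ is closed in $J^+\times\Gamma$, since uniform limits of time-affinely parametrized causal minimizers remain causal, time-affine (because the constant $d\tau(\dot\gamma)$ passes to the limit), and minimizing (by lower semi-continuity of $\mathcal{A}$).

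I would then apply the Kuratowski--Ryll-Nardzewski selection theorem to this non-empty, closed-valued multifunction over the Polish space $J^+$ to obtain a Borel section $F\colon J^+\to \Gamma$ with $e\circ F=\id_{J^+}$. Setting $\Pi:=F_\sharp \pi$ yields a Borel probability on $\Gamma$ with $(\ev_0,\ev_1)_\sharp \Pi=\pi$, hence $(\ev_0)_\sharp\Pi=\mu_0$ and $(\ev_1)_\sharp\Pi=\mu_1$, and $\Pi$ is a dynamical optimal coupling since $\pi$ is optimal and $\supp\Pi\subseteq \Gamma$ by construction.

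The main obstacle is the measurable selection: one needs a path-space parametrization in which minimizers form a closed, compact-valued multifunction. Using the extrinsic $\tau$-parametrization rather than an intrinsic one is essential, because lightlike minimizers admit no preferred affine parametrization; this is exactly what the constraint $d\tau(\dot\gamma)\equiv\text{const}$ in the definition of $\Gamma$ achieves, and it is what makes the closedness of the graph a routine Arzelà--Ascoli argument rather than a delicate issue at the lightcones.
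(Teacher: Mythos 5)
Your proof is correct and follows essentially the route the paper relies on: Proposition \ref{dynoptcou} is quoted from \cite{suhr16}, where (as the paper itself uses in the proof of Lemma \ref{lem:finite-approximation} via \cite[Proposition 3.10]{suhr16}) one takes a static optimal coupling and lifts it by a Borel selection $S\colon J^+\to C^0([0,1],M)$ with $S(x,y)\in\Gamma_{x\to y}$, exactly your measurable-selection push-forward. Your Arzel\`a--Ascoli/selection argument with the $\tau$-parametrization is the same idea, so there is nothing genuinely different to flag.
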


In the following all Lebesgue measures are understood to be induced by the Riemannian metric $h$. The assumptions $(i)$ and
$(ii)$ of the next theorem are similar and yield the same conclusion. It is not necessarily obvious 
that Theorem \ref{thm2}$(i)$ is the analogue of the classical solution to the dual Kantorovich problem for real-valued cost functions, see \cite{villani}. 
The conclusion of Theorem \ref{thm2} under assumption $(ii)$ on the other hand has no counterpart there. 

\begin{theorem}[Non-existence of dual solutions]\label{thm2}
Let $(\mu,\nu)\in \mathcal{P}^+_\tau(M)$. Assume that the supports of both measures are disjoint and that the DKP for $(\mu,\nu)$ admits a solution $\psi\colon 
M\to\R\cup\{\infty\}$. Further assume $\mu$ to be either 
\begin{enumerate}
\item[(i)] supported on a spacelike hypersurface $H$ and that it is absolutely continuous with respect to the 
Lebesgue measure $\mathcal{L}_H$ on $H$ or
\item[(ii)]\label{thm3} absolutely continuous with respect to the Lebesgue measure $\mathcal{L}$ on $M$.
\end{enumerate}
Denote with $\Pi$ a dynamical optimal coupling of $\mu$ and $\nu$ and with $\Gamma_0$ the set of lightlike 
minimizers in $\Gamma$. Then $\Pi(\Gamma_0)=0$, i.e. only a $\mu$-negligible set of points is transported along lightlike minimizers. 
\end{theorem}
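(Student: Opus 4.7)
I argue by contradiction. Suppose $\Pi(\Gamma_0)>0$; by disintegration one obtains a Borel set $A\subset\supp\mu$ with $\mu(A)>0$ and a measurable section $A\ni x\mapsto y_x\in\supp\nu$ such that $(x,y_x)\in\supp\pi$ lies on a lightlike minimizer and $c_L(x,y_x)=0$ for $\mu$-a.e.\ $x\in A$, where $\pi:=(\ev_0,\ev_1)_\sharp\Pi$.

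The driving estimate is a H\"older-one-half lower bound on $\psi$. From the DKP identity $\psi^{c_L}(y_x)=\psi(x)$ and the defining infimum of the $c_L$-transform one has, for every $x'\in J^-(y_x)$,
\[
\psi(x')\ge\psi(x)+\ell(x',y_x),
\]
where $\ell:=-c_L|_{J^+}$ denotes the time separation. A Gauss-lemma expansion of $\mathbb L$ at the lightlike initial vector of the minimizer from $x$ to $y_x$ produces a definite open cone $\K(x)\subset T_xM$ (intersected with $T_xH$ in case (i)) on which $\ell(x',y_x)\ge C\,d_h(x',x)^{1/2}$, with $C>0$ locally uniform in $x$; moreover $\K(x)$ occupies a positive Lebesgue-density fraction of the ambient tangent space, uniformly on compacta.

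After a Lusin reduction along which $x\mapsto y_x$ (hence $x\mapsto\K(x)$) is continuous, I fix a Lebesgue density point $x_0\in A$, available by the absolute continuity of $\mu$, and build for each large integer $n$ chains $x_0=z_0,z_1,\ldots,z_n$ with $z_{i+1}\in A\cap\K(z_i)$, $d_h(z_{i+1},z_i)\asymp 1/n$, and all $z_i$ remaining in a small fixed neighborhood of $x_0$. At every density point the set $A\cap\K(z)$ carries positive lower Lebesgue density in $H$ (resp.\ $M$), so a Fubini/Vitali-covering argument yields a random chain whose endpoint $z_n$ is distributed over a set of $\mu$-measure bounded below by a constant $c>0$ independent of $n$. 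Chaining the H\"older bound then gives
\[
\psi(z_n)-\psi(x_0)\ge C'\,n\cdot (1/n)^{1/2}=C'\sqrt n
\]
on this $\mu$-positive-measure set of endpoints, so $\mu\{\psi\ge\psi(x_0)+C'\sqrt n\}\ge c$ for every large $n$. Taking $n\to\infty$ produces $\mu\{\psi=+\infty\}\ge c>0$, contradicting the $\mu$-a.e.\ finiteness of any DKP solution $\psi$. Hence $\Pi(\Gamma_0)=0$.

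The main obstacle is the quantitative chain construction: one must show that the joint probability of $n$ successive steps landing in $A\cap\K$ is bounded below uniformly in $n$, via Vitali-covering/Lebesgue-density estimates combined with the Lusin continuity of $x\mapsto\K(x)$ and uniform positivity of the cone density. The H\"older-one-half exponent is precisely the local regularity of the time separation near a lightlike geodesic, and the resulting $n\cdot n^{-1/2}=n^{1/2}$ growth along a chain of $n$ infinitesimal steps matches the $n^{1-q/2}$ scaling mentioned in the introduction for general $q\in(0,1]$.
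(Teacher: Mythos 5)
Your core mechanism is the same as the paper's: chain the dual constraint along lightlike subdifferential pairs, use the H\"older-$\tfrac12$ behaviour of the time separation near the light cone to gain $\approx n^{-1/2}$ per step of size $\approx 1/n$, and let the accumulated $\sqrt n$ force $\psi$ to be infinite on a set of positive $\mu$-measure (your sign bookkeeping, $\psi(x')\ge\psi(x)+\ell(x',y_x)$ for $x'\in J^-(y_x)$, is consistent with the paper's conventions). However, the step you yourself flag as ``the main obstacle'' is a genuine gap, and it is the crux: you need, uniformly in $n$, a set of $\mu$-measure $\ge c>0$ on which the chained lower bound holds, and your route to it --- random chains $z_0,\dots,z_n$ with $z_{i+1}\in A\cap\K(z_i)$ at scale $1/n$, endpoint distribution controlled by a Fubini/Vitali argument --- is not carried out and does not work as sketched. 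Lebesgue density gives quantitative control only below a point-dependent scale, so requiring every intermediate $z_i$ to be a quantitatively good density point of $A$ (in fact of the Lusin/Egorov-regularized subset, so that the next step is again admissible) simultaneously for all $n$ steps is a nontrivial uniformization problem, and no argument is offered that the endpoints $z_n$ sweep out measure bounded below independently of $n$. As written, the contradiction set could degenerate as $n\to\infty$.

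The paper avoids this difficulty entirely, and comparing the two is instructive. First it reduces, via Lemma \ref{DKP-interpolation} (dual solutions persist under intermediate/restricted transports), to a localized situation with compact disjoint supports and uniform geometry --- this is also what your ``locally uniform'' constants implicitly require. Then it fixes a single direction $v_0$ such that the starting points whose lightlike initial velocities lie within angle $\phi_0/4$ of $v_0$ form a set $A'$ of positive $\mathcal{L}_H$-measure, takes a density point $p$ of $A'$, and uses Fubini in polar coordinates to find one ray in the cone direction on which $A'$ has $1$-dimensional measure bounded below; the $n$ chain points are then selected on this single segment by the elementary spacing Lemma \ref{lemmadots}, so no step-by-step density control is needed. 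Most importantly, the blow-up is not read off at the chain endpoints but on a \emph{fixed} macroscopic ball $B_{\e_3\sin(\phi_0/4)}(p)\subset J^-(y_n)$, guaranteed by the uniform cone estimate for $\partial J^-$ along lightlike geodesics; since $p\in\supp\mu$, this ball has positive $\mu$-measure for every $n$, and the endpoint-distribution estimate you were missing becomes unnecessary. If you replace your random-chain construction by this ``one direction, one ray, fixed terminal ball'' device, your argument closes; without it, the proof is incomplete. (For case (ii) the paper, unlike your parallel treatment, disintegrates $\mu$ along a foliation by spacelike hypersurfaces and reduces to case (i); your direct ambient-cone variant is plausible but would need the same repaired chain construction.)
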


The theorem generalizes \cite[Corollary 3.6]{berpue} and \cite[Theorem C]{BPP}. 
Note that Theorem \ref{thm2} is proven indirectly and relies on a very similar construction as the
$1+1$-dimensional example in Section \ref{1-dim-counterexample}.

The following two theorems are the second main result of this article. The first theorem has a counterpart in 
the work of the second author, see \cite[Theorem 2.13]{suhr16} and the second theorem is a solution to
the {\it relativistic Monge problem}: For $(\mu,\nu)\in \mathcal{P}^+_\tau(M)$ find a Borel-mesurable map $F\colon M\to M$ such that $\pi:=(\id,F)_\sharp \mu$ is
an optimal coupling of $\mu$ and $\nu$. 

Note that the proofs are independent of \cite[Theorem 2.13]{suhr16} and rely only on a straightforward geometric argument, see \cite[Proposition 3.22]{suhr16}.

\begin{definition}\label{defachro}
A set $A\subset M$ is {\it achronal} if $c_L|_{A\times A}\ge 0$. In case $c_L|_{A\times A}\equiv \infty$ one says that $A$
is {\it acausal}. 
\end{definition}

It is not difficult to see that any time slice $\{\tau = \tau_0\}$ is acausal. The definition is in accordance with the classical definitions of 
acausal and achronal sets in Lorentzian geometry.

\begin{theorem}[Existence and uniqueness for achronal targets]\label{thm4}
Let $(\mu,\nu)\in \mathcal{P}^+_\tau(M)$ such that $\mu$ is absolutely continuous 
with respect to the Lebesgue measure on $M$ and $\nu$ is concentrated on an achronal set.
Then there exists a unique dynamical optimal coupling $\Pi$ such that $(\ev_t)_\sharp \Pi$ is absolutely 
continuous for $t\in[0,1)$ and the optimal couplings $(\ev_t,\ev_1)_\sharp \Pi$ are induced by transport maps.
\end{theorem}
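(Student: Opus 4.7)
The plan is first to obtain a dynamical optimal coupling $\Pi$ with $\supp\Pi\subset\Gamma$ via Proposition \ref{dynoptcou}, and then to establish three things: (a) $\Pi$ is concentrated on time-affine geodesics that do not branch at any interior time, (b) $\mu_t:=(\ev_t)_\sharp\Pi$ is absolutely continuous for $t\in[0,1)$, and (c) $\Pi$ is the only dynamical optimal coupling with these properties. Steps (a) and (c) exploit the achronality of $\supp\nu$ via the strong non-branching lemma (Lemma \ref{lem:strong non-branching}), while (b) relies on the weak measure contraction estimate (Lemma \ref{L1}) alluded to in the introduction.

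For (a) the key claim is: if $\gamma_1,\gamma_2\in\supp\Pi$ satisfy $\gamma_1(t)=\gamma_2(t)$ for some $t\in(0,1)$, then $\gamma_1=\gamma_2$. I would argue by contradiction using the standard swapping trick. The concatenations $\tilde\gamma:=\gamma_1|_{[0,t]}\ast\gamma_2|_{[t,1]}$ and $\hat\gamma:=\gamma_2|_{[0,t]}\ast\gamma_1|_{[t,1]}$ are causal curves connecting $\gamma_1(0)$ to $\gamma_2(1)$ and $\gamma_2(0)$ to $\gamma_1(1)$ respectively, hence
\begin{equation*}
c_L(\gamma_1(0),\gamma_2(1))+c_L(\gamma_2(0),\gamma_1(1))\le\mathcal{A}(\tilde\gamma)+\mathcal{A}(\hat\gamma)=\mathcal{A}(\gamma_1)+\mathcal{A}(\gamma_2).
\end{equation*}
Since $(\ev_0,\ev_1)_\sharp\Pi$ is optimal, hence $c_L$-cyclically monotone, the reverse inequality also holds; equality then forces $\tilde\gamma$ and $\hat\gamma$ to be minimizers of $\mathcal{A}$ and therefore genuine $\Phi^\mathbb{L}$-orbits by Proposition \ref{P1}. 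Applying Lemma \ref{lem:strong non-branching} to $\gamma_1$ and $\tilde\gamma$, which agree on $[0,t]$, yields $\gamma_1\equiv\tilde\gamma$ and hence $\gamma_1(1)=\gamma_2(1)$, contradicting $\gamma_1\neq\gamma_2$. Achronality of $\supp\nu$ enters to keep the rearranged costs $c_L(\gamma_1(0),\gamma_2(1))$ and $c_L(\gamma_2(0),\gamma_1(1))$ finite, so that the inequality above does not degenerate. From this claim, measurable selection produces Borel maps $F_t$ and $T_t$ with $(\ev_0,\ev_t)_\sharp\Pi=(\id,F_t)_\sharp\mu$ and $(\ev_t,\ev_1)_\sharp\Pi=(\id,T_t)_\sharp\mu_t$, establishing the transport-map statement.

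For (b) I would use that the restriction operator $r_t\colon\Gamma\to\Gamma$, $r_t(\gamma)(s):=\gamma(ts)$, pushes $\Pi$ forward to a dynamical optimal coupling of $\mu$ with $\mu_t$. The measure contraction estimate of Lemma \ref{L1} then yields a quantitative volume bound on $F_t$ which, combined with its injectivity from step (a), implies $\mu_t\ll\mathcal{L}$ for $t\in(0,1)$. For uniqueness, if $\Pi'$ is another dynamical optimal coupling with AC interpolations, the endpoint coupling $(\ev_0,\ev_1)_\sharp\Pi'$ coincides with that of $\Pi$ by the uniqueness clause of \cite[Theorem 2.12]{suhr16}; the stronger volume-sensitive non-branching property of \cite[Proposition 3.21]{suhr16} then identifies $\Pi$ and $\Pi'$ on the level of $\Gamma$.

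The main obstacle I expect is (b). Without Lipschitz regularity of the transport directions, one cannot differentiate $F_t$ directly, and absolute continuity must be extracted purely from the non-branching structure together with a volume contraction input. Controlling contraction rates uniformly as optimal geodesics approach the lightcone boundary is the delicate point, and it is precisely where the weak measure contraction property (or alternatively the $(K,N)$-convexity of the entropy of \cite{mccann18}) replaces the classical Jacobi-field computation.
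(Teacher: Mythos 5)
The pointwise claim at the heart of your step (a) is not available, and this is a genuine gap rather than a presentational one. Lemma \ref{lem:strong non-branching} is a measure-theoretic statement: for subsets of a cyclically monotone $S$ with disjoint endpoint sets it asserts $\mathcal{L}(S^{(1)}_\sigma\cap S^{(2)}_\sigma)=0$, not that two minimizers agreeing on $[0,t]$ coincide; the remark following it makes clear that the empty-intersection (pointwise) conclusion holds only when $S_1$ is \emph{acausal} (e.g.\ contained in a time slice), not merely achronal. In your swapping argument, once the concatenations are minimizers, uniqueness of geodesics only tells you that $\gamma_2(1)$ lies on the forward continuation of the geodesic carrying $\gamma_1$; if that continuation is lightlike, then $\gamma_1(1)\neq\gamma_2(1)$ with $c_L(\gamma_1(1),\gamma_2(1))=0$ is perfectly compatible with achronality of $\supp\nu$, so no contradiction arises. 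This is exactly the configuration $z\in B_{y_1,y_2}$ that the paper can exclude only up to an $\mathcal{L}$-null set (Lemma \ref{L20}). Consequently your maps $F_t$, $T_t$ are not produced by pointwise single-valuedness, and your order of deduction is circular: the null-set version of non-branching is only useful against a measure already known to be absolutely continuous, yet you derive $\mu_t\ll\mathcal{L}$ in (b) from the injectivity claimed in (a). (Also, achronality is not what keeps the rearranged costs finite --- causality of the concatenations does that; achronality is needed, and fails to suffice, precisely to rule out distinct lightlike-related endpoints.)

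Step (b) then conceals the actual technical content of the theorem. Lemma \ref{L1} contracts volume toward a \emph{single} target point; to obtain an interpolation inequality $\mathcal{L}(C_t)\ge f(t)\mathcal{L}(C_0)$ for an optimal coupling with a non-discrete achronal target one must sum over targets while keeping the intermediate sets essentially disjoint and then pass to a limit. The paper does this in two stages: approximation by finite target measures supported in a time slice (Lemma \ref{lem:finite-approximation}, Proposition \ref{P3}), uniqueness and transport maps for time-slice targets via the Selection Dichotomy of \cite{kell17} (Proposition \ref{P20}), and only then the extension to achronal targets by cutting with the sets $\Omega_{\tau_0,n}$ and interpolating through time slices --- and it explicitly notes that a direct approximation inside a general achronal set seems out of reach. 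Your proposal compresses all of this into one sentence, and outsources uniqueness to \cite[Theorem 2.12]{suhr16} and \cite[Proposition 3.21]{suhr16}, results the paper deliberately does not use here; even granting them, they do not by themselves give the absolute continuity of the interpolants $(\ev_t)_\sharp\Pi$ nor the uniqueness of the dynamical coupling subject to that property, which is the new content of Theorem \ref{thm4}.
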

\begin{remark}
Note that the Monge problem is in general highly non-unique even in the non-relativistic setting. 
In the non-relativistic setting the equivalent to being supported on a time slice would be to assume 
the second measure is concentrated in a level set of a dual solution to the Monge problem. However, 
such a condition depends on the first measure.
\end{remark}

\begin{theorem}[Solution to the relativistic Monge Problem]\label{thm5}
Let $(\mu,\nu)\in \mathcal{P}^+_\tau(M)$ such that $\mu$ is absolutely continuous with respect 
to the Lebesgue measure on $M$. Then there exists a Borel-measurable map $F\colon M\to M$ such that $\pi:=(\id,F)_\sharp \mu$ is an optimal coupling of $\mu$ and 
$\nu$.
\end{theorem}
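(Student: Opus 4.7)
The plan is to build $F$ from a dynamical optimal coupling by showing that such a coupling is automatically concentrated on a graph over its first marginal. Concretely, by Proposition \ref{dynoptcou} there exists a dynamical optimal coupling $\Pi$ of $\mu$ and $\nu$ with $\supp\Pi \subseteq \Gamma$. Disintegrating along the starting-point map gives
$$\Pi = \int_M \Pi_x\, d\mu(x),$$
where $\Pi_x$ is a probability measure on $\{\gamma \in \Gamma:\gamma(0)=x\}$. If one can prove that $\Pi_x$ is a Dirac mass for $\mu$-almost every $x$, then setting $F(x):=\gamma_x(1)$ for the $\Pi_x$-a.s.\ unique $\gamma_x$ produces the desired map: by construction $(\id,F)_\sharp\mu = (\ev_0,\ev_1)_\sharp\Pi$, which is an optimal coupling of $\mu$ and $\nu$. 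Borel measurability of $F$ then follows from a standard measurable-selection argument applied to $\supp(\ev_0,\ev_1)_\sharp\Pi$.

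The heart of the argument is therefore the Dirac claim. Suppose by contradiction that on a Borel set $E\subseteq M$ with $\mu(E)>0$ the measure $\Pi_x$ is supported on at least two distinct time-affine minimizers $\gamma^{(1)}_x\neq \gamma^{(2)}_x$. By measurable selection one extracts two Borel maps $x\mapsto \gamma^{(i)}_x$, $i=1,2$, defined on $E$. The strong non-branching property of Lemma \ref{lem:strong non-branching} then forces $\gamma^{(1)}_x(t)\neq \gamma^{(2)}_x(t)$ for every $t\in (0,1]$: two time-affinely parametrized minimizers sharing a point at two distinct times must coincide. Consequently the two ``branches'' above $E$ give rise to genuinely distinct continuations at every positive time.

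The plan is now to combine this branching with the absolute continuity of $\mu$ to contradict the optimality of $\Pi$. Using the reverse triangle inequality for $c_L$ along causal chains, one performs a ``swap'' of endpoints: for a positive-measure family of pairs $(x,x')\in E\times E$, the concatenations obtained by following $\gamma^{(1)}_x$ up to time $t$ then jumping to $\gamma^{(2)}_{x'}(t)$ (and vice versa) are strict improvements once the two branches separate, because a broken causal path from $x$ to $\gamma^{(2)}_{x'}(1)$ passing through $\gamma^{(1)}_x(t)$ has strictly larger (less negative) action than the $c_L$-minimizer unless it is itself a minimizing geodesic — and non-branching rules out the latter. Absolute continuity of $\mu$ is used to guarantee that such pairs exist in positive measure and that the swap can be realized as a new measurable coupling with the same marginals.

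The main obstacle is exactly this last step: turning the pointwise improvement at each branching $x$ into a global measurable improvement of $\Pi$ that retains both marginals. The delicate point is the simultaneous pairing of endpoints $\gamma^{(1)}_x(1)$ and $\gamma^{(2)}_{x'}(1)$ in a measurable way that is cost-decreasing on a set of positive $\Pi\otimes\Pi$-measure; this is where absolute continuity of $\mu$ and the Borel structure of $\Gamma$ are brought to bear, through Lusin-type measurable selection. Once the Dirac property is secured, $F$ is simply $\ev_1\circ\sigma$ for any Borel selection $\sigma(x)\in\supp\Pi_x$, and uniqueness of the selection $\mu$-almost everywhere makes $F$ well-defined independently of the selection.
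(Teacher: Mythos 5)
Your overall strategy rests on the claim that the disintegration $\Pi_x$ of a dynamical optimal coupling is a Dirac mass for $\mu$-almost every $x$, i.e.\ that \emph{every} optimal coupling from an absolutely continuous source is automatically induced by a map. Without any achronality assumption on $\nu$ this is false, and the theorem does not assert it: it only claims that \emph{some} optimal coupling is induced by a map (the paper explicitly remarks that optimal couplings are in general non-unique and not graphs without a relative form of achronality). The concrete failure point is your ``swap'' step. If the two branches $\gamma^{(1)}_x$ and $\gamma^{(2)}_x$ have their endpoints on a common causal geodesic through $x$ --- i.e.\ both targets lie on the same transport ray --- then the reverse triangle inequality for $c_L$ is an \emph{equality} along that ray, the broken path yields no strictly better cost, and in fact every causal coupling of measures concentrated on a single causal geodesic is optimal (this is Lemma \ref{L21} of the paper). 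So mass-splitting optimal couplings genuinely exist and no contradiction with optimality can be extracted; cyclical monotonicity of $\supp\pi$ simply does not force single-valuedness here. In addition, your appeal to Lemma \ref{lem:strong non-branching} is not available: that lemma requires the endpoint set $S_1$ to be achronal, which is exactly the hypothesis you do not have (it is the hypothesis of Theorem \ref{thm4}, not of Theorem \ref{thm5}).

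The correct route --- and the one the paper takes --- is a Sudakov-type reduction: pass to a maximal $c_L$-cyclically monotone hull $A_{max}$ of $\supp\pi$, use Lemma \ref{lem:A_x-geodesics} to show intermediate points of minimizers stay in $A_{max}$, remove the forward/backward branching sets $A^{\pm}$ (where Lemma \ref{lem:exclude-Aplus}, via the time-slice result Proposition \ref{P20} and the Self-Intersection Lemma, shows the transport is trivial), so that the reduced relation $R_{red}$ decomposes $M$ into transport rays, each the image of a single causal geodesic on which $\tau$ is injective. One then disintegrates $\mu$ and $\nu$ along the rays, checks the conditional source measures are non-atomic after pushing forward by $\tau$, and on each ray solves the one-dimensional problem by the monotone rearrangement; since along a single geodesic any causal coupling is optimal (Lemma \ref{L21}), gluing these monotone maps gives a measurable map $\Psi$ with $(\id\times\Psi)_\sharp\mu$ optimal. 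Your proposal replaces this construction with a uniqueness claim that does not hold, so the argument does not go through as written.
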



\section{Two Examples}\label{examples}

\subsection{An example with no dual solution}\label{1-dim-counterexample}

Let $M=\R^2$ and 
$$\mathbb{L}\colon TM\to \R,\; (x,v)\mapsto v_1^2-v_2^2$$ 
with
$$\mathcal{C}:=\{(x,v)\in TM|\; v_2\ge |v_1|\},$$
where $v=(v_1,v_2)$. It follows that $c_L$ is the negative Lorentzian distance on the $2$-dimensional Minkowski space. 
Fix the splitting 
$$\tau\colon M\to \R,\; (s,t)\mapsto t.$$

Denote with $i_0,i_1\colon \R\to M$, the maps $i_0(s):=(s,0)$ and $i_1(s):=(s,1)$, respectively and with 
$\mathcal{L}_1$ the Lebesgue measure on the real line $\R$. Consider the transport problem between 
$$\overline{\mu}:=(i_0)_\sharp(\mathcal{L}_1|_{[0,1]})\text{ and }\overline{\nu}:=(i_1)_\sharp(\mathcal{L}_1|_{[1,2]}).$$
The map 
$$\overline{T}\colon M\to M,\; (s,t)\mapsto (s+1,t+1)$$
induces a causal coupling $(\id,\overline{T})_\sharp \overline{\mu}$ of $\overline{\mu}$ and $\overline{\nu}$, i.e. $(\overline{\mu},\overline{\nu})\in 
\mathcal{P}^+_\tau(M)$.

\begin{prop}\label{propex}
The DKP for $(\overline{\mu}, \overline{\nu})$ does not have a solution.
\end{prop}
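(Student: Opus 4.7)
The plan is to first identify the optimal couplings, then use the square-root singularity of $c_L$ transverse to the lightcone to contradict the existence of a real-valued dual potential on $\supp\overline{\mu}$.

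First I would analyze the primal problem. Any coupling $\pi$ of $\overline{\mu}$ and $\overline{\nu}$ is concentrated on $[0,1]\times\{0\}\times[1,2]\times\{1\}$, which I identify with $[0,1]\times[1,2]$ via $((s,0),(t,1))\mapsto(s,t)$. Causality forces $t-s\le 1$ on $\supp\pi$, while the marginal constraints give $\int s\,d\pi=1/2$ and $\int t\,d\pi=3/2$, hence $\int(t-s)\,d\pi=1$. Combined with $t-s\le 1$ $\pi$-a.e., this forces $t=s+1$ $\pi$-a.e. Thus the unique coupling in $\mathcal{P}_\tau^+$, which is necessarily optimal, is induced by $\overline{T}$, and $c_L\equiv 0$ on its support.

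Next I would assume for contradiction that a $c_L$-convex $\psi\colon M\to\R\cup\{\infty\}$ solves the DKP. Setting $x_s=(s,0)$ and $y_s=(s+1,1)$, there is a set $A\subset[0,1]$ of full Lebesgue measure such that for every $s\in A$ one has $\psi(x_s)\in\R$ and $\psi^{c_L}(y_s)-\psi(x_s)=c_L(x_s,y_s)=0$. The definition of $\psi^{c_L}$ then yields $\psi(x)+c_L(x,y_s)\ge\psi(x_s)$ for every $x\in M$. Restricting $x$ to the horizontal line $\{x_2=0\}$, say $x=(r,0)$ with $r\in(s,1]$, and computing $c_L((r,0),(s+1,1))=-\sqrt{1-(s+1-r)^2}$, I obtain for $f(r):=\psi(r,0)$ the key inequality
\begin{equation*}
f(r)-f(s)\ge\sqrt{1-(1-(r-s))^2}=\sqrt{(r-s)(2-(r-s))}
\end{equation*}
valid for all $s\in A$ and $r\in(s,1]$; in particular for all $r\in A\cap(s,1]$ where $f$ is finite.

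Finally, I would exploit the square-root singularity as $r\downarrow s$. Fix any $s_0<s_*$ in $A$ and, using that $A$ has full measure, pick $s_0<s_1<\dots<s_n=s_*$ in $A$ with $s_k-s_{k-1}\in[\delta/(2n),3\delta/(2n)]$, where $\delta:=s_*-s_0$. Since $\sqrt{u(2-u)}\ge\sqrt{u}$ for $u\in[0,1]$, telescoping gives
\begin{equation*}
f(s_*)-f(s_0)\ge\sum_{k=1}^n\sqrt{(s_k-s_{k-1})(2-(s_k-s_{k-1}))}\ge n\sqrt{\delta/(2n)}=\sqrt{n\delta/2},
\end{equation*}
which tends to $\infty$ with $n$. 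This forces $f(s_*)=+\infty$, contradicting $s_*\in A$. The main obstacle is merely the first step—verifying that the mass must travel along the lightcone—after which the contradiction is driven entirely by the non-Lipschitz behavior of $c_L$ at $\partial\mathcal{C}$; the argument makes clear why strict timelikeness as in Definition \ref{defstricttime} is needed for Theorem \ref{thm1}.
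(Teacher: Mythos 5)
Your proposal is correct and follows essentially the same route as the paper: the contradiction is driven, exactly as in the paper's Lemma \ref{lemmadots} and Lemma \ref{lemmaex2}, by selecting $n$ points of the full-measure set with consecutive gaps of order $\delta/n$ and telescoping the square-root singularity of $c_L$ at the lightcone to force the potential to be infinite at a point where it must be finite. The one genuine (and welcome) simplification is your first step: identifying the unique finite-cost coupling via the first-moment identity $\int(t-s)\,d\pi=1$ combined with $t-s\le 1$ $\pi$-a.e., which replaces the paper's dyadic induction argument in Lemma \ref{lemmaex1}; the remaining minor care needed in your selection of the chain $s_0<s_1<\dots<s_n=s_*$ inside $A$ (feasibility of hitting $s_*$ exactly) is easily handled, e.g.\ by dropping the upper bound on the gaps and appending $s_*$ as the final point, since the increments are nonnegative.
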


The transport problem for $(\overline{\mu},\overline{\nu})$ is equivalent to the following transport problem on the real line: The restriction of $c_L$ to 
$$\{((s,0),(t,1))|\; s,t\in \R\}\subset \R^2\times \R^2$$
and the identification 
$$\{((s,0),(t,1))|\; s,t\in \R\}\cong \R\times \R,\; ((s,0),(t,1))\cong (s,t)$$
yield the cost function 
$$c\colon \R\times\R\to \R,\; 
(s,t)\mapsto \begin{cases} -\sqrt{1-(s-t)^2},&\text{ for }|s-t|\le 1\\ 
\infty,&\text{ for }|s-t|\ge 1\end{cases}
$$ 
and the probability measures $\overline{\mu}$ and $\overline{\nu}$ are identified with 
$$\mu=\mathcal{L}_1|_{[0,1]}\text{ and }\nu=\mathcal{L}_1|_{[1,2]},$$ 
respectively.

\begin{lemma}\label{lemmaex1}
If $\pi$ is a coupling of $\mu$ and $\nu$ with finite $c$-cost, then 
$$\pi= (\id,T)_\sharp \mu,$$ 
where $T\colon \R\to \R$, $s\mapsto s+1$.
\end{lemma}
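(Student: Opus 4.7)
The plan is to exploit the very restrictive shape of the effective support of any finite-cost coupling, and then squeeze $\pi$ onto a single line via the marginal constraints.

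First I would note that finite $c$-cost forces $\supp \pi$ to lie in
$$\Delta:=\{(s,t)\in\R^2\,|\, s\in[0,1],\; t\in[1,2],\; t-s\le 1\},$$
which is the closed triangle with vertices $(0,1)$, $(1,1)$ and $(1,2)$, the hypotenuse being the graph of $T$. The key structural observation is that within $\Delta$ the event $\{s\le a\}$ is \emph{contained} in $\{t\le a+1\}$: indeed $t\le s+1\le a+1$. Hence for every $a\in[0,1]$ one has the inclusion $\{s\le a\}\cap\Delta\subseteq\{t\le a+1\}\cap\Delta$.

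Next I would compute the two marginal masses. By the first marginal condition, $\pi(\{s\le a\})=\mu([0,a])=a$; by the second, $\pi(\{t\le a+1\})=\nu([1,a+1])=a$. Combined with the inclusion above, the two events agree up to a $\pi$-null set, hence
$$\pi\bigl(\{s>a,\;t\le a+1\}\bigr)=0\qquad\text{for every }a\in[0,1].$$
Letting $a$ run over a countable dense subset of $[0,1]$, I conclude that for $\pi$-almost every $(s,t)\in\Delta$ the implication $t\le a+1\Rightarrow s\le a$ holds for all rational $a$, which together with the triangle constraint $t\le s+1$ forces $t=s+1$.

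Thus $\pi$ is concentrated on the graph of $T\colon s\mapsto s+1$. Since the first marginal is $\mu$, standard disintegration (each fiber is a single point) gives $\pi=(\id,T)_\sharp\mu$, as claimed. I do not expect any real obstacle: the argument is just the monotone-rearrangement idea in the degenerate case where the triangular support leaves no room for any coupling but the graph one; the only care needed is to phrase the inclusion-of-events step so that the marginal identities can be combined cleanly, which the choice of rational $a$'s handles.
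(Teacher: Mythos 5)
Your proof is correct, and while the underlying mechanism is the same as in the paper --- the marginal constraints together with the causal constraint $t\le s+1$ leave room for nothing but the translation --- your execution is genuinely different. The paper argues on the level of the support and by induction: it first shows $\mu([0,\e])=\pi([0,\e]\times[1,1+\e])$, deduces $\pi([\e,1]\times\R)=\pi([\e,1]\times[1+\e,2])$, and then iterates over dyadic partitions to trap $\supp\pi$ in the union of squares $[k2^{-n},(k+1)2^{-n}]\times[1+k2^{-n},1+(k+1)2^{-n}]$, letting $n\to\infty$. You instead compare distribution functions in one shot: from the inclusion $\{s\le a\}\cap\Delta\subseteq\{t\le a+1\}\cap\Delta$ and the equal masses $\pi(\{s\le a\})=a=\pi(\{t\le a+1\})$ you get $\pi(\{s>a,\,t\le a+1\})=0$ for each $a$, and running $a$ over the rationals in $[0,1]$ together with $t\le s+1$ forces $t=s+1$ $\pi$-almost everywhere. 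Your route avoids the induction entirely and works with almost-everywhere statements rather than with the support (the paper's version pins down $\supp\pi$ itself, which here is an equivalent conclusion); the final step, that a coupling concentrated on the graph of a Borel map with first marginal $\mu$ equals $(\id,T)_\sharp\mu$, is standard and correctly invoked. All the individual verifications (the triangle $\Delta$ as the finite-cost region, the inclusion of sublevel sets, the marginal computations) check out.
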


\begin{proof}
Let $\pi$ be a coupling of $\mu$ and $\nu$ with finite cost. For every $\e>0$ one has 
$$\mu([0,\e])=\nu([1,1+\e])=\nu([-1,1+\e]).$$
The support of $\pi$ is contained in $\{(s,t)|\;|s-t|\le 1\}$ since it has finite cost. Therefore 
$$\mu([0,\e])=\pi([0,\e]\times \R)=\pi([0,\e]\times[1,1+\e]).$$ 
By complementary reasoning one
concludes that 
$$\pi([\e,1]\times \R)=\pi([\e,1]\times[1+\e,2]).$$
An induction over $n$ then implies that the support of $\pi$ is contained in 
$$\bigcup_{k=0}^{2^n}\left([k\cdot 2^{-n},(k+1)\cdot2^{-n}]\times [1+k\cdot2^{-n},1+(k+1)\cdot2^{-n}]\right)$$
for every $n\in \N$. The claim follows in the limit for $n\to\infty$.
\end{proof}

\begin{lemma}\label{lemmadots}
Let $[a,b]\subset \R$, $\e>0$ and a Borel measurable set $B\subset [a,b]$ be given with $\mathcal{L}_1(B)\ge \e(b-a)$. Then for all $n\in\N$ there exists 
$\{t_i\}_{1\le i\le n}\subset B$ with $t_1<\ldots <t_n$ and $t_{i+1}-t_i\ge \frac{\e}{2n}(b-a)$.
\end{lemma}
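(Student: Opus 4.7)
The plan is to prove Lemma \ref{lemmadots} by a pigeonhole-style argument applied to a partition of $[a,b]$ into intervals whose length equals the target spacing. Set $\delta := \e(b-a)/(2n)$ and partition $[a,b]$ into consecutive half-open intervals $I_1,I_2,\ldots,I_M$ of length $\delta$ (with the last one possibly shorter), where $M=\lceil(b-a)/\delta\rceil$. Let $N$ be the number of intervals $I_k$ with $\mathcal{L}_1(B\cap I_k)>0$. Since each such summand is at most $\delta$, the hypothesis $\mathcal{L}_1(B)\ge \e(b-a)$ immediately yields $N\ge \mathcal{L}_1(B)/\delta \ge 2n$.

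Next, enumerate these occupied intervals in order as $I_{k_1},\ldots,I_{k_N}$ with $k_1<\cdots<k_N$, and select every other one, namely $I_{k_1},I_{k_3},\ldots,I_{k_{2n-1}}$. From each chosen interval pick an arbitrary $t_i\in B\cap I_{k_{2i-1}}$; positivity of measure guarantees such points exist. The resulting $t_1<t_2<\cdots<t_n$ lie in $B$ by construction. For the spacing, the selection $k_{2i+1}\ge k_{2i-1}+2$ forces at least one whole unselected interval of length $\delta$ between $I_{k_{2i-1}}$ and $I_{k_{2i+1}}$, so a short calculation using $t_i<a+k_{2i-1}\delta$ and $t_{i+1}\ge a+(k_{2i+1}-1)\delta$ gives $t_{i+1}-t_i\ge\delta$, as desired.

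The only real subtlety — and the reason the bound carries a factor of $2$ in the denominator — is that one must skip every other occupied interval: two points lying in adjacent intervals of length $\delta$ may be arbitrarily close, so naively picking one point from each occupied interval does not enforce any positive separation. Requiring $2n$ occupied intervals rather than $n$ is precisely what the crude measure estimate $N\ge \mathcal{L}_1(B)/\delta=2n$ delivers, so the two ends of the argument meet with no further work.
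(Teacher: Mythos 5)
Your proof is correct, and it goes by a genuinely different mechanism than the paper's. The paper argues greedily via the cumulative measure function $t\mapsto \mathcal{L}_1(B\cap[a,t])$: it picks $t_1\in B$ with only a tiny amount of $B$-measure to its left, discards the window of length $\tfrac{\e}{2n}(b-a)$ after it, and repeats, observing that each step consumes at most $\tfrac{\e}{n}(b-a)$ of the measure of $B$, so the process cannot terminate before $n$ points have been chosen. You instead fix once and for all a partition of $[a,b]$ into intervals of length $\delta=\tfrac{\e}{2n}(b-a)$, count that at least $2n$ of them meet $B$ in positive measure (since each can carry at most $\delta$ of it), and then enforce the spacing by taking every other occupied interval; the estimate $t_i<a+k_{2i-1}\delta$, $t_{i+1}\ge a+(k_{2i+1}-1)\delta$ together with $k_{2i+1}\ge k_{2i-1}+2$ indeed gives $t_{i+1}-t_i\ge\delta$, and the needed count $N\ge 2n\ge 2n-1$ is available, so all steps close. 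Both arguments exploit exactly the same factor-of-two slack — the paper through the $2\delta$ of measure consumed per greedy step, you through skipping every second occupied interval — which is why both land on the constant $\tfrac{\e}{2n}$. Your static pigeonhole version is slightly easier to verify because it avoids the inductive bookkeeping of the running measure; the paper's greedy scan has the minor advantage of producing the points sequentially with explicit control on where the first one sits, but nothing in the application (the chain construction in Lemma~\ref{lemmaex2} and in the proof of Theorem~\ref{thm2}) requires that extra feature, so your route would serve equally well there.
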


\begin{proof}
Let $n\in\N$ be given. Consider the function $t\mapsto \mathcal{L}_1(B\cap [a,t])$ and choose $t_1\in B$ such that 
$$\mathcal{L}_1(B\cap [a,t_1])\in\left(0,\frac{\e}{2n}(b-a)\right).$$
Then one has
$$\mathcal{L}_1\left(B\cap \left[a,t_1+\frac{\e}{2n}(b-a)\right)\right)\le \frac{\e}{n}(b-a).$$ 
Next consider the function 
$$t\mapsto \mathcal{L}_1\left(B\cap \left[t_1+\frac{\e}{2n}(b-a),t\right]\right),\text{ for }t>t_1+\frac{\e}{2n}(b-a).$$ 
Choose $t_2\in B$ such that 
$$\mathcal{L}_1\left(B\cap \left[t_1+\frac{\e}{2n}(b-a),t_2\right]\right)\in \left(0,\frac{\e}{2n}(b-a)\right).$$ 
Then one has
$$\mathcal{L}_1\left(B\cap \left[a,t_2+\frac{\e}{2n}(b-a)\right]\right)\le 2\frac{\e}{n}(b-a)=2\frac{\e}{n}(b-a).$$
Continue inductively. For $k<n$ one has 
$$\mathcal{L}_1\left(\left[a,t_k+\frac{\e}{2n}(b-a)\right]\right)\le k\frac{\e}{n}(b-a)=\frac{k}{n}\e(b-a)\le \frac{n-1}{n}\e(b-a).$$
Thus one concludes $t_k+\frac{\e}{2n}(b-a)<b$ for all $k<n$. This shows that the construction does not 
terminate before $n$ points have been chosen. The claimed properties are clear from the construction.
\end{proof}

\begin{lemma}\label{lemmaex2}
There does not exists a $c$-convex function $\psi\colon \R\to \R\cup\{\infty\}$ with $\psi|_{[0,1]}\not \equiv \infty$ and 
$\psi^c(y)-\psi(x)=c(x,y)$ for $\pi$-almost all $(x,y)\in \R\times \R$, where $\pi$ is the coupling in Lemma \ref{lemmaex1}.
\end{lemma}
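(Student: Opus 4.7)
The plan is to argue by contradiction, showing that the dual equality along the lightlike support forces $\psi$ to grow too fast on $A$ to remain real-valued on a uniform scale. Let me first observe that the equation $\psi^c(s+1)-\psi(s)=c(s,s+1)=0$ is meaningful only when both sides are real, so the hypothesis produces a full-measure subset $A\subset[0,1]$ on which $\psi(s)\in\R$ and $\psi^c(s+1)=\psi(s)$.

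The key step is to extract a one-sided monotonicity-with-gap of $\psi$ along $A$. Combining $\psi^c(s+1)=\psi(s)$ with the tautological bound $\psi^c(y)\le\psi(x')+c(x',y)$ (immediate from the infimum defining $\psi^c$) at the pair $(x',y)=(s',s+1)$ for $s\in A$ and $s'\in(s,s+2]$, and using the direct computation $c(s',s+1)=-\sqrt{(s'-s)(2-(s'-s))}$, yields
\[
\psi(s')\ge\psi(s)+\sqrt{(s'-s)(2-(s'-s))}.\qquad(\star)
\]
Iterating $(\star)$ along any finite chain $t_1<\cdots<t_n$ in $A$, and using that $t_{i+1}-t_i\le 1$ forces $2-(t_{i+1}-t_i)\ge 1$, I would obtain
\[
\psi(t_n)-\psi(t_1)\ge\sum_{i=1}^{n-1}\sqrt{(t_{i+1}-t_i)(2-(t_{i+1}-t_i))}\;\ge\;\sum_{i=1}^{n-1}\sqrt{t_{i+1}-t_i}.
\]

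To turn the divergence of this sum into a contradiction, I would localise $\psi$ to a positive-measure subset on which it is bounded. Writing $A=\bigcup_{M\in\N}B_M$ with $B_M:=\{s\in A:|\psi(s)|\le M\}$ and using $\sigma$-additivity of Lebesgue measure, some $M$ satisfies $\varepsilon:=\mathcal{L}_1(B_M)>0$. Applying Lemma~\ref{lemmadots} to $B_M$ in $[a,b]=[0,1]$ produces, for every $n\in\N$, points $t_1<\cdots<t_n$ in $B_M$ with $t_{i+1}-t_i\ge\varepsilon/(2n)$. Inserting these into the chain estimate gives
\[
2M\;\ge\;\psi(t_n)-\psi(t_1)\;\ge\;(n-1)\sqrt{\varepsilon/(2n)},
\]
which fails for $n$ large, yielding the desired contradiction.

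The main obstacle lies in the middle step: the inequality $(\star)$ only encodes a one-sided gap, and $\psi$ is a priori unbounded on $A$, so the chain estimate taken at face value is not directly contradictory. The essential device is the restriction to the sub-level set $B_M$ of positive measure, which simultaneously bounds the endpoints $\psi(t_1)$ and $\psi(t_n)$ while still supplying enough points (via Lemma~\ref{lemmadots}) to make the sum blow up.
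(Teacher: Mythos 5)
Your argument is essentially the paper's: the same reduction to the dual (in)equalities along the graph of $T$, the same computation $c(s',s+1)=-\sqrt{(s'-s)(2-(s'-s))}\le-\sqrt{s'-s}$, and the same use of Lemma \ref{lemmadots} to produce a chain whose cost sum diverges like $\sqrt{n}$; only the endgame differs. The paper lets the chain drive the potential to an infinite value on an interval and contradicts the definition of a dual solution, whereas you localise to a sub-level set $B_M=\{s\in A:|\psi(s)|\le M\}$ of positive measure and derive the quantitative contradiction $2M\ge (n-1)\sqrt{\varepsilon/(2n)}$.

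The one point you should tighten is the $B_M$ step: you invoke $\sigma$-additivity for the sets $B_M$, but their measurability is not known, since $\psi$ is only assumed $c$-convex and no measurability of $\psi$ (hence of its sub-level sets) has been established; moreover Lemma \ref{lemmadots} is stated for Borel sets. This is repairable (Lebesgue outer measure is continuous along arbitrary increasing sequences, and the proof of Lemma \ref{lemmadots} goes through for outer measure), but it is simpler to avoid it altogether: fix a single point $s_0\in A$ with $s_0<1$ and chain inside $A\cap[s_0,t]$; your inequality $(\star)$ then gives $\psi(t)\ge\psi(s_0)+(n-1)\sqrt{\varepsilon(t-s_0)/(2n)}$ for every $n$, hence $\psi(t)=+\infty$ for all $t\in(s_0,1]$, contradicting the finiteness of $\psi$ on the full-measure set $A$. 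This variant needs no level sets and is exactly the paper's route (with the opposite sign convention, the paper produces the value $-\infty$ instead).
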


The existence of a solution to the dual problem is independent of an additive constant in the definition of the cost function, i.e. for $c':=c+1$ the pair 
$(\phi',\psi')$ solves the DKP for $c'$ iff $(\phi',\psi-1)$ solves the DKP for $c$.

\begin{proof}
Let $(\phi,\psi)$ be a solution to the DKP for $(\mu,\nu)$, i.e. $c(x,y)\ge \phi(x)+\psi(y)$ and 
$$\int c\, d\pi' =\int \phi\, d\mu +\int \psi\, d\nu= \int (\phi +\psi)\, d\pi'.$$
Thus one has $c=\phi+\psi$ $\pi'$-almost surely.

Let $s<t\in [0,1]$. By Lemma \ref{lemmadots} for every $n\in\N$ there exist $s\le t_1<\ldots t_n\le t$ with $t_{k+1}-t_k\ge \frac{t-s}{2n}$ and $\phi(t_k)+\psi(t_k+1)=0$.
Thus one has 
$$c\left(t_{k+1},1+t_k\right)\le c\left(0,1-\frac{1}{2n}(t-s)\right)\le -\sqrt{\frac{1}{2n}(t-s)}.$$
As in \cite[page 61]{villani} it follows that
\begin{align*}
\psi(s)&\le \psi(t)+\sum_{k=1}^{n} c\left(t_{k+1},1+t_k\right)\\
&\le \psi(t)-n\sqrt{\frac{1}{2n}(t-s)}=\psi(t)-\sqrt{\frac{n(t-s)}{2}}
\end{align*}
for all $n$. Therefore $\psi(s)=-\infty$ for all $s<1$. But this contradicts the definition of $\psi$.
\end{proof}

\begin{proof}[Proof of Proposition \ref{propex}]
The claim follows directly from Lemma \ref{lemmaex2} by reversing the identification 
$$\{((s,0),(t,1))|\; s,t\in\R\}\cong \R\times\R.$$
\end{proof}


\subsection{An example with non-Lipschitz dual solution}
An example is given of a strictly timelike pair $(\mu,\nu)$ for which the optimal coupling is not bounded away from $\partial J^+$. 
This counters the intuition that the optimal coupling of strictly timelike pairs is supported away from $\partial J^+$.

Let $0<\e<\frac 1 2$. Choose a function $\bar f\in C^{0,\frac{1}{2}}([0,5])\cap C^\infty ([0,5]\setminus \{2\})$ with 
\begin{itemize}
\item[(1)] $\bar f\equiv 1+\e$ on $[0,1]$, 
\item[(2)] $\bar f(x)> -c_L((x,0),(1,1))$ for $x\in [1,2)$, 
\item[(3)] $\bar f(2)=0$
\item[(4)] $\bar f(x)> c_L((3,0),(x,1))$ for $x\in (2,3]$, 
\item[(5)] $\bar{f}'<0$ near $2$.
\item[(6)] $\bar f\equiv \e-1$ on $[3,4]$ 
\end{itemize}
that induces a $\frac{1}{2}$-H\"older continuous function $f$ on $\R/5\Z$, smooth except at $[2]\in \R/5\Z$. 

Now consider $\R/5\Z\times \R$ with the inner product 
$$(\mathbb{L}=)g:=d\theta^2 -dt^2$$ 
for $(\theta,t)\in \R/5\Z\times\R$ where 
$$\mathcal{C}:=\{v|\; g(v,v)\le 0,\; dt(v)\ge 0\}.$$ 
The cost function $c_L$ for the pair $(g,\mathcal{C})$ is 
$$c_L((\eta,s),(\theta,t))=\begin{cases}-\sqrt{(t-s)^2-(\theta-\eta)^2},&\; s\le t,\; t-s\ge \theta-\eta\\
\infty,&\text{ else.}
\end{cases}$$ 
Define 
$$\phi\colon \R/5\Z\times \R \to \R\cup\{\infty\},\; \phi(y):= \inf \{f(\theta)+c_L((\theta,0),y)|\; \theta\in \R/5\Z\}.$$
\begin{lemma}
One has 
$$\phi^{c_L}(y)= \inf\nolimits_x\{\phi(x)+c_L(x,y)\}\equiv \phi(y)$$
for all $y\in \R/5\Z\times \R$. It follows that $\phi$ is $c_L$-concave. 
\end{lemma}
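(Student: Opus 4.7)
The plan is to prove the identity $\phi^{c_L}(y) \equiv \phi(y)$ by a two-sided inequality, after which the $c_L$-concavity of $\phi$ falls out for free: $\phi$ is by construction the $c_L$-transform of the auxiliary function $\tilde f$ on $\R/5\Z \times \R$ defined by $\tilde f(\theta, 0) := f(\theta)$ and $\tilde f \equiv +\infty$ off $\R/5\Z \times \{0\}$.

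First I would handle the easy direction $\phi^{c_L}(y) \le \phi(y)$ by testing the infimum defining $\phi^{c_L}(y)$ at $x = y$. The zero vector lies in $\mathcal{C}$, so the constant curve at $y$ is causal with vanishing action; equivalently, the explicit formula for $c_L$ displayed just above the statement yields $c_L(y, y) = 0$. This immediately gives $\phi^{c_L}(y) \le \phi(y) + c_L(y, y) = \phi(y)$.

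For the reverse inequality I would invoke the reverse triangle inequality for the Lorentzian time separation $-c_L$, which rewritten in terms of $c_L$ reads
\[
c_L((\theta, 0), y) \le c_L((\theta, 0), x) + c_L(x, y)
\]
for all $\theta \in \R/5\Z$ and $x, y \in \R/5\Z \times \R$, with the convention that the inequality is vacuous whenever either term on the right equals $+\infty$. This is obtained by concatenating minimizers produced by Proposition \ref{P1}. Adding $f(\theta)$ to both sides and taking the infimum over $\theta$ (noting that $c_L(x, y)$ does not depend on $\theta$) gives $\phi(y) \le \phi(x) + c_L(x, y)$ for every $x$; a further infimum over $x$ delivers $\phi(y) \le \phi^{c_L}(y)$.

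The main obstacle, such as it is, is bookkeeping around the $+\infty$ cases where some of the points involved are not causally related; in each of those cases the triangle inequality is vacuous and both inequalities above remain valid. Once $\phi^{c_L} \equiv \phi$ has been established, $c_L$-concavity of $\phi$ is automatic from its exhibition as the $c_L$-transform of $\tilde f$.
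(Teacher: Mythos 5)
Your proof is correct and follows essentially the same route as the paper: the inequality $\phi^{c_L}(y)\le\phi(y)$ from $c_L(y,y)=0$, and the reverse inequality from the triangle inequality for $c_L$ applied to points $(\theta,0)$, $x$, $y$. The only (harmless) difference is that you pass through infima directly, whereas the paper picks points $z$ and $\theta$ attaining the infima defining $\phi^{c_L}(y)$ and $\phi(z)$; your variant avoids even having to discuss attainment.
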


\begin{proof}
Indeed first since $c_L(x,x)=0$ one has 
$$\phi^{c_L}(y)=\inf\nolimits_x \{\phi(x)+c_L(x,y)\}\le \phi(y)$$
for all $y$. 

Fix $y\in \R/5\Z\times \R$ and choose $z\in \R/5\Z\times \R$ with $\phi^{c_L}(y)=\phi(z)+c_L(z,y)$. For $z$ choose $\theta\in \R/5\Z$ 
with $\phi(z)=f(\theta)+c_L((\theta,0),z)$. Then one has 
$$\phi(y)\le f(\theta)+c_L((\theta,0),y)\le f(\theta) +c_L((\theta,0),z)+c_L(z,y)=\phi(z)+c_L(z,y)=\phi^{c_L}(y)$$
by the triangle inequality for $c_L$. Thus one has 
$$\phi(y)=\inf\nolimits_x\{\phi(x)+c_L(x,y)\}=\phi^{c_L}(y)$$ 
for all $y$.
\end{proof}

As usual define 
$$\partial_c \phi :=\{(x,y)|\; \phi(y)-\phi(x)=c_L(x,y)\}\subseteq (\R/5\Z\times\R)\times (\R/5\Z\times\R)$$
and $\partial_c\phi_{x}:=p_2(\partial_c\phi \cap (\{x\}\times (\R/5\Z\times\R)))$. Note that for all $(\theta',t)$ with $t\ge 0$ there exists $\theta \in \R/5\Z$ with 
$(\theta',t)\in \partial_c\phi_{(\theta,0)}$ since $c_L$ is continuous on its domain. 

\begin{lemma}
For $\theta\neq [2]$ and $y\in \partial_c \phi_{(\theta,0)}$ with $t(y)>0$ one has $y\in I^+(\theta,0)$. Further for every $(\theta,t)\in\R/5\Z\times [0,1]$ with $\theta\neq [2]$ the set 
$\partial_c\phi_{(\theta,0)}\cap \R/5\Z\times\{t\}$ has exactly one element. 
\end{lemma}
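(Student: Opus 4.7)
The strategy is to exploit the mismatch between the smoothness of $\bar f$ at $\theta \neq [2]$ and the square-root ($\frac{1}{2}$-H\"older, not Lipschitz) behaviour of $c_L$ across the light cone from $(\theta,0)$. Part one is by contradiction via a lightcone perturbation; part two is the twist condition for the smooth cost inside $I^+$.

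For the first claim, assume toward contradiction that $y \in \partial_c\phi_{(\theta,0)}$ lies on $\partial J^+((\theta,0))$ with $t(y) > 0$. Then $c_L((\theta,0),y) = 0$ and $y = (\theta+\sigma t, t)$ for some $\sigma\in\{\pm 1\}$ and $t > 0$, and $\phi(y)=f(\theta)$. Set $\theta_\delta := \theta+\sigma\delta$ for small $\delta > 0$. A direct computation gives
\[
c_L((\theta_\delta, 0), y) = -\sqrt{t^2 - (t-\delta)^2} = -\sqrt{\delta(2t-\delta)},
\]
while smoothness of $\bar f$ at $\theta$ yields $f(\theta_\delta) = f(\theta)+O(\delta)$. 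For $\delta$ sufficiently small the $O(\sqrt{\delta})$ drop in cost beats the $O(\delta)$ change in $f$, so $f(\theta_\delta)+c_L((\theta_\delta,0),y) < f(\theta) = \phi(y)$, contradicting minimality of $\theta$ in the infimum defining $\phi(y)$. Hence $y \in I^+(\theta,0)$.

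For uniqueness, the case $t=0$ is immediate since the only point causally related to $(\theta,0)$ in the slice $\{t=0\}$ is $(\theta,0)$ itself, and $(\theta,0) \in \partial_c\phi_{(\theta,0)}$ trivially. For $t \in (0,1]$ suppose $y_i = (\theta_i', t) \in \partial_c\phi_{(\theta,0)}$ for $i = 1, 2$. By the first claim $|\theta_i' - \theta| < t$, so $\theta$ is interior to the domain of $\theta'' \mapsto f(\theta'') + c_L((\theta'',0), y_i)$, on which both summands are smooth near $\theta''=\theta$. Since $\theta$ minimizes this map, the first-order condition reads
\[
f'(\theta) = \frac{\theta_i' - \theta}{\sqrt{t^2 - (\theta_i' - \theta)^2}} \qquad (i = 1, 2).
\]
The map $u \mapsto u/\sqrt{t^2 - u^2}$ is a strictly increasing bijection $(-t,t) \to \R$ (the twist condition in $1{+}1$ Minkowski), so $\theta_1' = \theta_2'$, hence $y_1 = y_2$.

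The main obstacle is existence of an element in the slice $\partial_c\phi_{(\theta,0)}\cap (\R/5\Z\times\{t\})$. I would take the candidate $y := (\theta + f'(\theta)\,t/\sqrt{1 + f'(\theta)^2},\, t)$ produced by solving the first-order condition, for which $\theta$ is automatically a critical point of $\theta'' \mapsto f(\theta'') + c_L((\theta'',0), y)$. What remains is to upgrade this critical point to the \emph{global} minimizer. The difficulty is ruling out competing minimizers arising either from the kink of $\bar f$ at $[2]$ or from the periodic structure of $\R/5\Z$; this is precisely what conditions (2) and (4) on $\bar f$ are designed to achieve, in combination with the strict convexity of $\theta'' \mapsto c_L((\theta'',0), y)$ on its spatial argument (its second derivative equals $t^2/(t^2 - (\theta'-\theta'')^2)^{3/2}>0$), which prevents further interior critical points from competing with the one through $\theta$.
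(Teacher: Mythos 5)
Your proof of the first claim and of the uniqueness half of the second claim is correct and follows essentially the paper's route: the paper, too, rules out lightlike pairs by observing that moving the base point towards $y$ lowers the cost at rate $\sqrt{\delta}$ while $f$, being smooth away from $[2]$, changes only at rate $O(\delta)$, so $\theta$ cannot minimize $\eta\mapsto f(\eta)+c_L((\eta,0),y)$; and it obtains uniqueness from the first-order equation $f'(\theta)+\tfrac{\partial}{\partial\theta}c_L((\theta,0),(\theta',t))=0$ together with the fact that $u\mapsto u/\sqrt{t^2-u^2}$ is a strictly increasing bijection of $(-t,t)$ onto $\R$. Your explicit treatment of $t=0$ and the verification that the pair is timelike before differentiating are fine (indeed more careful than the printed argument).

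The genuine gap is the existence half of ``exactly one'': you candidly leave open that the critical point $\theta'=\theta+tf'(\theta)/\sqrt{1+f'(\theta)^2}$ gives a \emph{global} minimizer of $\eta\mapsto f(\eta)+c_L((\eta,0),y)$, and the justification you sketch does not close it. Strict convexity of $\eta\mapsto c_L((\eta,0),y)$ on $[\theta'-t,\theta'+t]$ says nothing about the sum: $f$ is an essentially arbitrary smooth function away from $[2]$, so $f+c_L((\cdot,0),y)$ may have several local, and a priori several global, minima; conditions (2) and (4) constrain $f$ only relative to the particular targets $(1,1)$ and $(\cdot,1)$ near the kink, not along every backward light cone. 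If for some $t$ and some target the global minimum were attained at two distinct base points, monotonicity of the argmin correspondence (the cost has a strict Spence--Mirrlees sign) would make every base point strictly between them absent from the slice $\partial_c\phi_{(\theta,0)}\cap\R/5\Z\times\{t\}$, so ``at least one element'' is a genuinely global statement about $f$ and cannot follow from the first-order condition alone. For comparison, the paper disposes of this direction in one sentence, asserting that the points of $\partial_c\phi_{(\theta,0)}$ at height $t$ are \emph{characterized} by the first-order equation: the necessity direction is exactly your uniqueness argument, while the sufficiency direction is precisely the global-minimality step you identify as missing; so your proposal reproduces everything the paper actually argues, but the convexity argument you offer in place of that asserted sufficiency would not survive scrutiny.
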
 

\begin{proof}
Indeed $y\in \partial_c \phi_{(\theta,0)}$ implies 
$$\inf\nolimits_{\eta}  \{f(\eta)+c_L((\eta,0),y)\}=\phi(y)=f(\theta)+c_L((\theta,0),y),$$ 
i.e. the function $\eta\mapsto  f(\eta)+c_L((\eta,0),y)$ has a minimum in $\theta$. If $c_L((\theta,0),y)=0$ then $\eta\mapsto c_L((\eta,0),y)$ falls off
to one side of $\theta\neq [2]$ faster than $f$ can rise by construction. Therefore in this case $\theta$ cannot be a minimum. Thus it follows that $c_L((\theta,0),y)<0$,
i.e. $y\in I^+((\theta,0))$. 

Now fix $\theta\neq [2]$ and $t\in [0,1]$. Then the equation 
$$\frac{\partial}{\partial\theta}f(\theta)+ \frac{\partial}{\partial\theta} c_L((\theta,0),(\theta',t))=0$$ 
has exactly one solution 
$\theta'$. Since by the previous paragraph the points in $\partial_c\phi_{(\theta,0)}$ are characterized as solutions to this equation, the second part of the claim follows.
\end{proof}

\begin{lemma}\label{L10}
For every neighborhood $U$ of $\partial J^+$ and every $t\in (0,1]$, the $1$-dimensional Lebesgue measure of 
$$\{\theta\in\R/5\Z\setminus\{[2]\}|\;((\theta,0),y_{(\theta,t)})\in U\}$$ 
is positive, where 
$y_{(\theta,t)}$ denotes the unique point in $\partial_c\phi_{(\theta,0)}\cap \R/5\Z\times\{t\}$.
\end{lemma}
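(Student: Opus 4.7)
The plan is to exploit the H\"older-$\tfrac{1}{2}$ behavior of $\bar f$ at $2$, which is forced by condition (2) together with $\bar f(2)=0$, to show that $|f'(\theta)|$ is unbounded on every left-neighborhood of $[2]$ in $\R/5\Z$. The optimality equation from the preceding lemma then translates this into transport directions that approach the lightlike direction on sets of positive measure.

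First, from condition (2) one has
$$f(\theta)\,>\,\sqrt{1-(\theta-1)^2}\,=\,\sqrt{(2-\theta)\theta}\,\geq\,\sqrt{2-\theta}$$
for $\theta\in[1,2)$, while $f(2)=0$. Combined with the smoothness of $f$ on $(1,2)$, this precludes any bound $|f'|\leq N$ on an interval $(2-\delta_0,2)\subset (1,2)$: such a bound would make $f$ Lipschitz near $2$ and force, via the mean value theorem on $[2-h,2-\epsilon]$ with $\epsilon\downarrow 0$, the estimate $f(2-h)\leq Nh$, contradicting $f(2-h)\geq\sqrt{h}$ for $h<N^{-2}$. Since $f'$ is continuous on $(2-\delta_0,2)$, the set
$$A_N\,:=\,\{\theta\in(2-\delta_0,2)\,:\,|f'(\theta)|>N\}$$
is open and non-empty for every $N>0$, hence has positive one-dimensional Lebesgue measure.

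Second, the characterization of $y_{(\theta,t)}=(\theta',t)$ from the preceding lemma reads $f'(\theta)=(\theta'-\theta)/\sqrt{t^2-(\theta'-\theta)^2}$. Solving for $u:=\theta'-\theta$ gives $|u|=|f'(\theta)|\,t/\sqrt{1+f'(\theta)^2}$, so
$$t-|u|\,=\,\frac{t}{\sqrt{1+f'(\theta)^2}\bigl(\sqrt{1+f'(\theta)^2}+|f'(\theta)|\bigr)}\,\longrightarrow\,0\quad\text{as }|f'(\theta)|\to\infty.$$
Since $((\theta,0),(\theta+u,|u|))\in\partial J^+$ whenever $|u|\leq t$, the pair $((\theta,0),(\theta+u,t))$ lies within distance $t-|u|$ of $\partial J^+$ in the product metric. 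By compactness of the relevant parameter range ($\theta\in[1,2]$, $|u|\leq t\leq 1$), given any neighborhood $U$ of $\partial J^+$ one finds $\eta>0$ such that $t-|u|<\eta$ implies $((\theta,0),(\theta+u,t))\in U$.

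Combining the two steps: given $U$ and $t\in(0,1]$, pick $\eta$ as above and then $N$ large so that $|f'(\theta)|>N$ forces $t-|u|<\eta$. Every $\theta\in A_N\subset\R/5\Z\setminus\{[2]\}$ then satisfies $((\theta,0),y_{(\theta,t)})\in U$, and $A_N$ has positive Lebesgue measure. The main obstacle is excluding a Lipschitz bound on $f$ near $[2]$: it rests on extracting the H\"older lower bound from condition (2) and arguing via the mean value theorem while avoiding the singular point $\theta=2$; the remainder is a direct consequence of the optimality equation and the topology of $\partial J^+$.
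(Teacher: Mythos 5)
Your proof is correct and takes essentially the same route as the paper: assumptions (2)--(3) force $|f'|$ to exceed any bound $N$ on a nonempty open (hence positive-measure) subset of every left neighborhood of $[2]$, and the stationarity equation $f'(\theta)+\partial_\theta c_L((\theta,0),y_{(\theta,t)})=0$ then places $y_{(\theta,t)}$ within distance $t-|u|\to 0$ of the light cone, hence inside $U$ by compactness. The only cosmetic difference is how positivity of measure is obtained: you use openness of $\{|f'|>N\}$ together with a mean-value argument at $\theta=2$, while the paper bounds the measure of $\{-f'\ge C\}$ from below by that of the superlevel set of $\partial_\theta c_L(\cdot,(1,1))$.
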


\begin{proof}
The Lebesgue measure of points $\theta$ such that $-f'(\theta)\ge C$ is bounded from below by the Lebesgue measure of the set of points with 
$\frac{\partial}{\partial\theta} c_L((\theta,0),(1,1))\ge C$ for $C$ sufficiently large by the assumptions (2) and (3) above. The last set has positive Lebesgue measure for every $C<\infty$. For every neighborhood $U$
of $\partial J^+$ there exists $C_U<\infty$ such that $\left|\frac{\partial}{\partial\theta} c_L((\theta,0),y)\right|\ge C_U$ for all $((\theta,0),y)\in U$. Now $y=y_{(\theta,t)}$ is the unique solution to 
the equation $\frac{\partial}{\partial\theta}f(\theta)+ \frac{\partial}{\partial\theta} c_L((\theta,0),y)=0$ with $t(y)=t$, and the claim follows.
\end{proof}

\begin{lemma}\label{L11}
There exists $\delta=\delta(t)>0$ such that  $\dist((\theta -t,t),\partial_c \phi_{(\theta,0)})\ge \delta$ 
for all $\theta\neq [2]$.
\end{lemma}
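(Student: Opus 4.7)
The plan is to exploit the explicit first-order characterization of the subdifferential provided by the preceding lemma and convert it into a quantitative lower bound on the distance.

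For $\theta\neq [2]$ and $t\in(0,1]$, the preceding lemma identifies the unique element $y_{(\theta,t)}=(\theta+u,t)$ of $\partial_c\phi_{(\theta,0)}\cap\R/5\Z\times\{t\}$ via the first-order condition $f'(\theta)=u/\sqrt{t^2-u^2}$, which solves explicitly as
$$u=u(\theta,t)=\frac{tf'(\theta)}{\sqrt{1+f'(\theta)^2}}\in(-t,t).$$
Since $(\theta-t,t)$ and $y_{(\theta,t)}$ share the time slice $\{t\}$, their Euclidean distance equals
$$|u(\theta,t)+t|=t\left(1+\frac{f'(\theta)}{\sqrt{1+f'(\theta)^2}}\right),$$
which is strictly positive for every $\theta\neq[2]$ because $f'(\theta)$ is a finite real number -- equivalently, because the transport ray $s\mapsto(\theta+su/t,s)$ starting at $(\theta,0)$ is strictly timelike.

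To upgrade this pointwise positivity to the asserted uniform lower bound $\delta(t)>0$, I would argue by contradiction: a sequence $\theta_n\neq[2]$ with $d(\theta_n,t):=|u(\theta_n,t)+t|\to 0$ must, by continuity of $d(\cdot,t)$ on $\R/5\Z\setminus\{[2]\}$ and its positivity there, satisfy $\theta_n\to[2]$ (along a subsequence) and, through the distance formula, $f'(\theta_n)\to-\infty$. Equivalently, the transport rays from $(\theta_n,0)$ collapse onto the left null direction. I would then extract a contradiction from this degeneration, combining the uniqueness of the subdifferential element at each smooth $\theta_n$ with the strict inequalities $(2)$ and $(4)$ together with the cusp condition $(3)$ in the construction of $\bar f$: the geometric picture is that $y_{(\theta_n,t)}$ would accumulate on $(2-t,t)$, which lies on the null geodesic through $(2,0)$, in a manner incompatible with the first-order characterization of $\partial_c\phi_{(\theta_n,0)}$ at the nearby smooth points.

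The main obstacle is precisely this limit analysis near the singular point $[2]$. Condition $(2)$ combined with $(3)$ actually forces $f'(\theta)\to-\infty$ as $\theta\to 2^-$, so no naive upper bound on $|f'|$ is available on all of $\R/5\Z\setminus\{[2]\}$; the uniform constant $\delta(t)$ therefore cannot be read off the first-order formula alone but must reflect the finer geometric structure of $\bar f$ and the light cone at $[2]$.
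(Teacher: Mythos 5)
Your starting point coincides with the paper's: the first-order identity $f'(\theta)=u/\sqrt{t^2-u^2}$, i.e.\ $u(\theta,t)=t f'(\theta)/\sqrt{1+f'(\theta)^2}$, and this does settle the region where $\theta$ stays outside a fixed neighborhood of $[2]$, since there $|f'|$ is bounded. The genuine gap is the case $\theta$ near $[2]$, which you leave as a hoped-for contradiction without supplying it --- and the contradiction you describe is not available along the route you propose. By your own formula the distance to the null point you are measuring from is $t\bigl(1+f'(\theta)/\sqrt{1+f'(\theta)^2}\bigr)$, and, as you yourself observe, conditions (2)--(3) force $f'(\theta_n)\to-\infty$ along a sequence $\theta_n\to 2^-$; along such a sequence this quantity tends to $0$, so no limit analysis of the ``finer geometric structure of $\bar f$'' near the cusp can restore a uniform lower bound within your bookkeeping. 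The ingredient the paper actually uses to close this case --- and which your argument never invokes --- is condition (5): near $[2]$ one does not need any bound on $|f'|$, only its \emph{sign}. The first-order identity reads $\frac{\partial}{\partial\theta}c_L((\theta,0),(\theta_t,t))=-f'(\theta)>0$ near $[2]$, while $\frac{\partial}{\partial\theta}c_L((\theta,0),(\theta',t))$ diverges with a definite sign as $(\theta',t)$ approaches the null point in question; hence the matched point cannot lie in a whole neighborhood (depending only on $t$) of that null point, with no further analysis of $\bar f$ required. That one-line sign argument, combined with boundedness of $f'$ away from $[2]$, is the entire proof.

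One further caveat: with your (carefully justified) sign convention $f'(\theta)=u/\sqrt{t^2-u^2}$, the null direction excluded by this sign mechanism near $[2]$ is the one reached with $u\to+t$, whereas $f'\to-\infty$ drives the matched point toward the other one; the paper instead asserts $\frac{\partial}{\partial\theta}c_L((\theta,0),(\theta',t))\to-\infty$ as $\theta'\downarrow\theta-t$, which is opposite to what your computation gives. So under your orientation the uniform bound one can actually prove is for the null point on the opposite side (with the concluding rotation taken in the corresponding direction), while under the paper's stated sign it is for $(\theta-t,t)$ as in the lemma. Whichever orientation is correct, the decisive missing step in your proposal is the same: replace the attempted degeneration analysis at the cusp by the sign condition (5) together with the one-signed blow-up of $\partial_\theta c_L$ at the cone.
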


\begin{proof}
For $\theta\neq [2]$ let $(\theta_t,t)\in \partial_c \phi_{(\theta,0)}$. By the Lemma \ref{L10} one has that 
$$\eta\mapsto c_L((\eta,0),(\theta_t,t))$$ 
is smooth at $\theta$ and 
$$\frac{\partial}{\partial\theta}f(\theta)+ \frac{\partial}{\partial\theta} c_L((\theta,0),(\theta_t,t))=0\Leftrightarrow\frac{\partial}{\partial\theta} c_L((\theta,0),(\theta_t,t))
=-\frac{\partial}{\partial\theta}f(\theta).$$ 
Since $\frac{\partial}{\partial\theta}f(\theta)$ is bounded outside every neighborhood of $[2]\in \R/5\Z$, the existence of $\delta$ follows there. By the assumption 
$\frac{\partial}{\partial\theta}f<0$ on a neighborhood of $[2]$ the bound follows in fact for all $\theta\neq [2]$ since $\frac{\partial}{\partial\theta} c_L((\theta,0),(\theta',t)) \to -\infty$ for $\theta'\downarrow \theta -t$. 
\end{proof}

Now consider the probability measure $\mu:= I_\sharp \mathcal{L}'$ where 
$$I\colon \R/5\Z\hookrightarrow \R/5\Z\times\R,\; \theta\mapsto (\theta, 0).$$ 
and $\mathcal{L}'$ is the normalized Lebesgue measure on $\R/5\Z$.
The following result is a reformulation of \cite[Proposition 3]{bebu1} adapted to the present situation. 

\begin{props}[\cite{bebu1}]
Let $\phi$ be a $c_L$-convex function, and let $\mu$ be a probability measure on $M$. Then there exists a probability measure 
$\nu$ on $M$ such that $\phi$ solves the DKP for $(\mu,\nu)$.
\end{props}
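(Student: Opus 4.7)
The plan is to realise $\nu$ as the push-forward $F_\sharp\mu$ along a measurable selection $F\colon M\to M$ of the $c_L$-subdifferential multifunction
\[
S(x):=\bigl\{y\in M\,\big|\,\phi^{c_L}(y)-\phi(x)=c_L(x,y)\bigr\}.
\]
Since $\phi$ is $c_L$-convex, it admits on $\{\phi<\infty\}$ the representation $\phi(x)=\sup_{y\in M}\{\phi^{c_L}(y)-c_L(x,y)\}$, so $S(x)$ is precisely the set of maximizers of this program. Once such an $F$ is in hand, $\pi:=(\id,F)_\sharp\mu$ and $\nu:=F_\sharp\mu$ form a coupling concentrated on $\partial_{c_L}\phi$, and applying Proposition \ref{P2a} to the admissible pair $(\phi,\phi^{c_L})$ will force $\pi$ to be optimal and every other optimal coupling to live on $\partial_{c_L}\phi$, which is the DKP condition for $\phi$.

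The work splits into (a) showing $S(x)\neq\varnothing$ on $\{\phi<\infty\}$ and (b) extracting a Borel selection. For (a), take a maximizing sequence $y_n$ with $\phi^{c_L}(y_n)-c_L(x,y_n)\to\phi(x)$. The values $\phi^{c_L}(y_n)$ must stay bounded below, for otherwise, using $-c_L\ge 0$, the objective would be bounded above by $\phi^{c_L}(y_n)\to-\infty$, contradicting convergence to the finite $\phi(x)$. Consequently $-c_L(x,y_n)$ is bounded, and the time-function bound $-d\tau\le L$ on $\mathcal{C}$ together with global hyperbolicity confines $(y_n)$ to a compact causal diamond, yielding a subsequential limit $y_\ast\in J^+(x)$. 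Upper semicontinuity of $-c_L(x,\cdot)$ on $J^+(x)$ combined with the defining inequality $\phi^{c_L}(y)\le\phi(x)+c_L(x,y)$ then forces $\phi^{c_L}(y_\ast)-c_L(x,y_\ast)\ge\phi(x)$, i.e.\ $y_\ast\in S(x)$.

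For (b), the graph of $S$ is Borel in $M\times M$ as the zero set of the Borel function $(x,y)\mapsto\phi(x)+c_L(x,y)-\phi^{c_L}(y)$, so a Jankov--von Neumann selection (or Kuratowski--Ryll-Nardzewski, applied to a closed-valued restriction) produces a universally measurable $F$ with $F(x)\in S(x)$ on $\{\phi<\infty\}$. Setting $\nu:=F_\sharp\mu$ and $\pi:=(\id,F)_\sharp\mu$ gives
\[
\int c_L\,d\pi=\int\bigl(\phi^{c_L}(F(x))-\phi(x)\bigr)\,d\mu(x)=\int\phi^{c_L}\,d\nu-\int\phi\,d\mu,
\]
whereas for any coupling $\tilde\pi$ of $(\mu,\nu)$ the weak duality Proposition \ref{P2a} yields $\int c_L\,d\tilde\pi\ge\int\phi^{c_L}\,d\nu-\int\phi\,d\mu$ with equality iff $\tilde\pi$ is concentrated on $\partial_{c_L}\phi$. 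Hence $\pi$ is optimal, and so are exactly those couplings concentrated on $\partial_{c_L}\phi$, which is the DKP assertion.

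The main obstacle is step (a). The non-finiteness and the discontinuity of $c_L$ across $\partial J^+$, stressed in the introduction, make the passage to the limit delicate: $y_\ast$ may a priori land on the lightcone of $x$, where $c_L(x,\cdot)$ jumps from $+\infty$ to finite values, so the semicontinuity argument has to be conducted in terms of $-c_L$ (which is upper semicontinuous on $J^+(x)$) rather than $c_L$ itself. A secondary technical point is checking the integrability conditions $\phi\in L^1(\mu)$ and $\phi^{c_L}\in L^1(\nu)$ needed to invoke Proposition \ref{P2a}, which is implicit in the DKP set-up but must be verified for the particular $\nu$ we construct.
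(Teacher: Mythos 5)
Your overall strategy -- measurably select $F(x)\in S(x)=\{y:\phi^{c_L}(y)-\phi(x)=c_L(x,y)\}$, set $\nu=F_\sharp\mu$, and conclude via weak duality -- is indeed the intended route (the paper gives no proof, citing Bernard--Buffoni, and in its application checks the contact condition by hand on the compact slices $\R/5\Z\times\{t\}$, where $c_L$ is continuous on its domain). The genuine gap is your step (a), nonemptiness of $S(x)$, which is exactly where the Lorentzian setting departs from the compact Tonelli setting of \cite{bebu1}. First, the sign is backwards: since $-c_L\ge 0$ on $J^+$, the objective $\phi^{c_L}(y_n)-c_L(x,y_n)$ is bounded \emph{below}, not above, by $\phi^{c_L}(y_n)$, so convergence to the finite value $\phi(x)$ does not prevent $\phi^{c_L}(y_n)\to-\infty$ compensated by $-c_L(x,y_n)\to+\infty$; hence neither the lower bound on $\phi^{c_L}(y_n)$ nor the boundedness of $-c_L(x,y_n)$ follows. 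Second, even if $-c_L(x,y_n)$ were bounded, this does not confine $y_n$ to a compact causal diamond: the Lorentzian distance is not proper (points arbitrarily far out near $\partial J^+(x)$ have distance close to $0$ from $x$), and the splitting estimate $-d\tau\le L$ only yields $\tau(y_n)-\tau(x)\ge -c_L(x,y_n)\ge 0$, i.e.\ a bound from below on $\tau(y_n)$, not from above, so the sequence may escape to infinity. Third, the limit step is not valid: upper semicontinuity of $-c_L(x,\cdot)$ is not enough, you also need $\limsup\phi^{c_L}(y_n)\le\phi^{c_L}(y_*)$, and $\phi^{c_L}=\inf_z\{\phi(z)+c_L(z,\cdot)\}$ is an infimum of functions that are only lower semicontinuous (they jump to $+\infty$ across $\partial J^+(z)$), so it need not be upper semicontinuous; the inequality $\phi^{c_L}(y)\le\phi(x)+c_L(x,y)$ you invoke only gives the trivial bound $\phi^{c_L}(y_*)-c_L(x,y_*)\le\phi(x)$, not the reverse. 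So the existence of contact points, i.e.\ $\partial_{c_L}\phi_x\neq\varnothing$ for $\mu$-a.e.\ $x$, is not established by your argument, and it is the heart of the matter.

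A secondary but real issue is the duality step: with $\mu$ arbitrary and $\phi$ merely $c_L$-convex you have no guarantee that $\tau\in L^1(\mu)\cap L^1(\nu)$ (needed even to place $(\mu,\nu)$ in $\mathcal{P}^+_\tau(M)$, where the DKP is defined), nor that $\phi\in L^1(\mu)$ and $\phi^{c_L}\in L^1(\nu)$; without these the identity $\int c_L\,d\pi=\int\phi^{c_L}\,d\nu-\int\phi\,d\mu$ and the ``equality iff concentrated on $\partial_{c_L}\phi$'' conclusion can degenerate into $\infty-\infty$ and Proposition \ref{P2a} cannot be applied as stated. You flag this but do not resolve it; either additional hypotheses (as implicitly present in the paper's application, where all supports are compact and $\phi$ is continuous) or a truncation/localization argument is needed to make the proposal into a proof.
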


By the proposition there exists a probability measure $\nu$ supported on $\R/5\Z\times \{1\}$ such that $\phi$ is optimal for the pair $(\mu,\nu)$. By Lemma \ref{L10} 
the transport is not bounded away from $\partial J^+$.

Now a rotation of $\R/5\Z$ in the negative direction leaves $\mu$ unchanged, but the coupling induced 
by $\phi$ is twisted into a coupling whose support has positive distance from $\partial J^+$ by Lemma \ref{L11}. Thus the pair $(\mu,\nu)$ is strictly timelike.

\section{Proofs}\label{proofs}

\subsection{Proof of Theorem \ref{thm1}}

Let $(\mu,\nu)\in \mathcal{P}^+_\tau(M)$ be strictly timelike. Further let $\pi$ be an optimal coupling of $\mu$ and $\nu$. Note that $\pi$ is causal 
since its cost is finite. Fix $(x_0,y_0)\in \supp \pi$. Define
\begin{align*}
\psi\colon J^-(\supp\nu)&\to\R\cup\{\pm\infty\},\\ 
x&\mapsto\sup\left\{\sum_{i=0}^k [c_L(x'_i,y'_i)-c_L(x'_{i+1},y'_i)]\right\}
\end{align*}
where the supremum is taken over all $k\in \N$ and all sequences  
$$\{(x'_i,y'_i)\}_{0\le i\le k+1}\subset\supp \pi\text{ with }x'_{k+1}=x\text{ and }(x'_0,y'_0)=(x_0,y_0).$$
One has $\psi(x_0)\ge c_L(x_0,y_0)-c_L(x_0,y_0)=0$. At the same time the right hand side of the above definition is nonpositive for $x=x_0$ 
by cyclic monotonicity, see \cite[Proposition 2.7]{suhr16}. Therefore one has $\psi(x_0)=0$. 

Next one shows that $\psi$ is real-valued and measurable on $\supp \mu$. Fix $(u,w)\in \supp\pi$. Consider for $k\in \N$ chains 
$\{(u_i,w_i)\}_{0\le i\le k}\subseteq \supp \pi$ with $u_{i+1}\in J^-(w_i)\cap \supp\mu$ for $0\le i\le k-1$ where $(u_0,w_0)=(u,w)$. Define 
$$A:=\{w_k|\; \{(u_i,w_i)\}_{0\le i\le k}\text{ as just defined}\}.$$
The claim is that 
$$p_2(p_1^{-1}(J^-(A))\cap \supp\pi)=A,$$ 
where $p_1,p_2\colon M\times M\to M$ are the canonical projections onto the first and second factor respectively.
It is easy to see that $A$ is contained in $p_2(p_1^{-1}(J^-(A))\cap \supp\pi)$. More precisely let $y\in A$. Choose 
$(x,y)\in\supp\pi$. Then $x\in J^-(y)\subset J^-(A)$ since $\pi$ is causal, i.e. $y\in p_2(p_1^{-1}(J^-(A))\cap \supp\pi)$. 
For the opposite inclusion consider $y\in p_2(p_1^{-1}(J^-(A))\cap \supp\pi)$. Then there exists $x\in M$ with $(x,y)\in p_1^{-1}(J^-(A))\cap \supp\pi$, 
i.e. $(x,y)\in \supp\pi$ and $x\in J^-(A)$. 
So there exists a chain 
$$\{(u_i,w_i)\}_{0\le i\le k}\subset \supp\pi$$ 
with $u_{i+1}\in J^-(w_i)\cap \supp\mu$ for $0\le i\le k-1$, $(u_0,w_0)=(u,w)$ and $x\in J^-(w_k)$. Now define a new chain 
$$\{(u_i,w_i)\}_{0\le i\le k+1}\subset\supp\pi$$ 
identical with the original chain for $i\le k$ and 
$$(u_{k+1},w_{k+1}):=(x,y).$$ 
Since $(x,y)\in \supp\pi$ this shows that $y\in A$. 

By the marginal property one has 
$$\mu(J^-(A))= \pi(p_1^{-1}(J^-(A))\cap \supp\pi)$$ 
and 
$$\nu(A)=\pi(p_2^{-1}(A))=\pi(p_2^{-1}(p_2(p_1^{-1}(J^-(A))\cap \supp\pi)))$$
by the above characterization of $A$. Since $B\subset p_2^{-1}(p_2(B))$ for any set $B\subset M\times M$ one knows that $\nu(A)\ge \mu(J^-(A))$. 
With the inclusion $\supp \pi \subseteq J^+$ one has on the other hand $\nu(A)\le \mu(J^-(A))$, i.e. $\nu(A)= \mu(J^-(A))$.

Consequently every causal coupling $\pi'$ of $(\mu,\nu)$ has to couple $J^-(A)$ with $A$, especially the coupling guaranteed by the definition of strict 
timelikeness. But that means $J^-(A)\cap \supp\mu$ is locally uniformly bounded away from $\partial J^-(A)$. Since $J^-(A)\cap \supp\mu$ is nonempty and 
open it has to be equal to $\supp\mu$ since $\supp\mu$ is connected. This implies $A=\supp\nu$ by the construction of $A$.

Now let $x\in \supp\mu$ be given. Choose $y\in\supp\nu$ with $(x,y)\in\supp \pi$. The above argument for the set $A$ with $(u_0,w_0)=(x,y)$ yields 
that there exists a finite chain $\{(u_i,w_i)\}_{0\le i\le k+1}\subset \supp\pi$ with $c_L(u_{i+1},w_i)<\infty$ and $(u_{k+1},w_{k+1})=(x_0,y_0)$. By definition of $\psi$ one has
$$\psi(x)+ \sum_{i=0}^k [c_L(u_i,w_i)-c_L(u_{i+1},w_i)]\le \psi(x_0)=0.$$
Since 
$$\sum_{i=0}^k [c_L(u_i,w_i)-c_L(u_{i+1},w_i)]>-\infty$$ 
one obtains $\psi(x)<\infty$.

Next consider the construction of $A$ with $(u_0,w_0)=(x_0,y_0)$. Then there exists a finite chain  $\{(u'_i,w'_i)\}_{0\le i\le k+1}\subset \supp\pi$ with 
$c_L(u'_{i+1},w'_i)<\infty$ and $(u'_{k+1},w'_{k+1})=(x,y)$. This time one has 
$$\psi(x_0)+ \sum_{i=0}^k [c_L(u'_i,w'_i)-c_L(u'_{i+1},w'_i)]\le \psi(x)$$
and it follows $\psi(x)>-\infty$.
Since $c_L$ is continuous and real-valued on $J^+$, one concludes that $\psi$ is measurable. 

Define $\zeta\colon \supp\nu\to \R\cup\{-\infty\}$, 
\begin{align*}
\zeta(y):=\sup \left\{\sum_{i=0}^k[c_L(x'_i,y'_i)-c_L(x'_{i+1},y'_i)] +c_L(x'_{k+1},y)\right\},
\end{align*}
where the supremum is taken over all $k\in\N$ and sequences 
$$\{(x'_i,y'_i)\}_{0\le i\le k+1}\in\supp\pi\text{ with }y'_{k+1}=y\text{ and }(x'_0,y'_0)=(x_0,y_0).$$ 

Then one has $\psi(x)=\sup_y \{\zeta(y)-c_L(x,y)\}$, i.e. $\psi$ is $c_L$-convex. It follows, like in step 3 of the proof of \cite[Theorem 5.10 (i)]{villani}
that 
$$\psi^{c_L}(y)-\psi(x) =c_L(x,y)$$ 
for $\pi$-almost all $(x,y)\in M\times M$. This finishes the proof of Theorem \ref{thm1}.

\subsection{Proof of Theorem \ref{thm2}(i)}

Let $\Pi$ be a dynamical optimal coupling between $\mu$ and $\nu$ and $\psi\colon M\to \R\cup\{\infty\}$ be a solution to the DKP for $(\mu,\nu)$. 
The proof is carried out via contradiction, i.e. one assumes that 
$$\Pi(\Gamma_0)>0$$ 
or equivalently $\Pi|_{\Gamma_0}\neq 0$. 

One has $(\ev_0)_\sharp\Pi|_{\Gamma_0}\le \mu$ 
and therefore 
$$(\ev\nolimits_0)_\sharp\Pi|_{\Gamma_0}\ll \mathcal{L}_H.$$ 
The goal is to find a set $V$ with positive measure relative to $\mu$ such that $\psi|_V\equiv -\infty$, i.e. contradicting 
the definition of a solution to the DKP for $(\mu,\nu)$. 

Since the problem is local one can assume that $\supp\mu$ and $\supp\nu$ are compact. 
The Borel measurable map $\Gamma\to TM$, $\gamma\mapsto \dot\gamma(0)$ induces a measure  on $\partial\mathcal{C}$
via the push forward of $\Pi|_{\Gamma_0}$. 

Since $\mu$ and $\nu$ have disjoint compact support  there exists a lower bound $\e_0>0$ on the distance between points in the supports. 
In order to illuminate the construction one can, by diminishing $\e_0$ and considering an intermediate transport, assume that 
\begin{itemize}
\item[(1)] there exists a submanifold chart $U\to \R^m$ of $H\cong \R^{m-1}\times \{0\}$ such that $\partial_m\in \inte \mathcal{C}$ everywhere and 
$$\supp\mu\cup\supp\nu \subset B_{3\e_0}(0)\subset \R^m\cong U,$$ 
\item[(2)] the Riemannian metric is induced by the euclidian metric on a convex neighborhood of $\supp\mu\cup\supp\nu$ and 
\item[(3)] $\exp^\mathbb{L}$ is a diffeomorphism from an open set in $T\R^m$ onto $B_{1}(0)\times B_{1}(0)$.
\end{itemize}

In order to justify these assumptions one has to show that the intermediate transport has a solution to the DKP for the transported measures.

\begin{lemma}\label{DKP-interpolation}
Let $(\mu,\nu)\in \mathcal{P}^+_\tau(M)$ such that the DKP for $(\mu,\nu)$ has a solution and let $\Pi$ be a dynamical optimal coupling of $\mu$ and $\nu$. Further let 
$\sigma_1,\sigma_2\colon \Gamma\to [0,1]$ be measurable with $\sigma_1\le\sigma_2$. Then there exists a solution of the DKP for the intermediate transport between 
$(\ev\circ\sigma_1)_\sharp \Pi$ and $(\ev\circ\sigma_2)_\sharp \Pi$. 
\end{lemma}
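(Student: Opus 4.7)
The plan is to realize the intermediate coupling directly as a pushforward of the dynamical optimal coupling $\Pi$ and to produce a dual potential for it by applying a single $c_L$-transform to the given potential $\psi$.

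Write $\mu_1 := (\gamma\mapsto\gamma(\sigma_1(\gamma)))_\sharp\Pi$, $\mu_2 := (\gamma\mapsto\gamma(\sigma_2(\gamma)))_\sharp\Pi$, and let $\tilde\pi$ be the pushforward of $\Pi$ under $\gamma\mapsto(\gamma(\sigma_1(\gamma)), \gamma(\sigma_2(\gamma)))$. Since $\sigma_1\le\sigma_2$ and every curve in $\Gamma$ is a causal minimizer, $\tilde\pi$ is a causal coupling and $(\mu_1,\mu_2)\in\mathcal{P}^+_\tau(M)$. Proposition~\ref{P1} applied to the subcurves $\gamma|_{[\sigma_1(\gamma),\sigma_2(\gamma)]}$ shows that they are themselves minimizers, whence the additivity
\begin{equation*}
c_L(\gamma(0),\gamma(1)) = c_L(\gamma(0),\gamma(\sigma_1)) + c_L(\gamma(\sigma_1),\gamma(\sigma_2)) + c_L(\gamma(\sigma_2),\gamma(1))
\end{equation*}
holds for $\Pi$-a.e.~$\gamma$, which in particular gives $\tilde\pi$ finite $c_L$-cost.

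Next I would set
\begin{equation*}
\tilde\psi(u) := \sup_{v\in M}\bigl\{\psi^{c_L}(v) - c_L(u,v)\bigr\},
\end{equation*}
so that $\tilde\psi$ is $c_L$-convex by construction, with witness $\zeta := \psi^{c_L}$. For $(u_0,v_0)\in\supp\tilde\pi$ pick $\gamma\in\supp\Pi$ with $u_0=\gamma(\sigma_1(\gamma))$, $v_0=\gamma(\sigma_2(\gamma))$ and $(\gamma(0),\gamma(1))\in\supp\pi$, so that $\psi^{c_L}(\gamma(1))=\psi(\gamma(0))+c_L(\gamma(0),\gamma(1))\in\R$. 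Using the above additivity together with the reverse triangle inequality for $c_L$ (valid for causal triples and trivial elsewhere under the convention $c_L=+\infty$ off $J^+$), the first step is to show
\begin{equation*}
\psi^{c_L}(v_0) = \psi(\gamma(0))+c_L(\gamma(0),v_0) \in \R,
\end{equation*}
exploiting for the lower bound that any competitor $x\in J^-(v_0)$ lies in $J^-(\gamma(1))$ so that the reverse triangle applies. Plugging $v=v_0$ into the definition of $\tilde\psi(u_0)$ then produces the lower bound $\tilde\psi(u_0)\ge\psi(\gamma(0))+c_L(\gamma(0),u_0)$; the matching upper bound follows from $\psi^{c_L}(v)\le\psi(\gamma(0))+c_L(\gamma(0),v)$ combined with the reverse triangle $c_L(\gamma(0),v)\le c_L(\gamma(0),u_0)+c_L(u_0,v)$ applied inside the outer supremum. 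A parallel computation gives $\tilde\psi^{c_L}(v_0)=\psi^{c_L}(v_0)$, and the duality $\tilde\psi^{c_L}(v_0)-\tilde\psi(u_0)=c_L(u_0,v_0)$ is then just the additivity of $c_L$ along $\gamma$.

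The identity $\tilde\psi(u)=\psi(\gamma(0))+c_L(\gamma(0),u)$ on $\supp\mu_1$ yields $\tilde\psi\in L^1(\mu_1)$ from $\psi\in L^1(\mu)$ and the integrability of $c_L(\gamma(0),\gamma(\sigma_1))$ with respect to $\Pi$, and similarly $\tilde\psi^{c_L}\in L^1(\mu_2)$. The weak Kantorovich duality of Proposition~\ref{P2a} then promotes $\tilde\pi$ to an optimal coupling and forces $\tilde\psi^{c_L}-\tilde\psi = c_L$ to hold almost surely with respect to \emph{every} optimal coupling of $(\mu_1,\mu_2)$, so $\tilde\psi$ is the sought DKP solution. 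The hardest step will be the upper bound on $\tilde\psi(u_0)$: the outer supremum over $v\in M$ mixes values of $\psi^{c_L}$ in $\R\cup\{\pm\infty\}$ with $c_L$ taking the value $+\infty$ off $J^+$, so a short case distinction is needed to ensure that only $v\in J^+(u_0)$ with $\psi^{c_L}(v)$ finite contribute, so that the causal chain $\gamma(0)\le u_0\le v$ is in force and the reverse triangle applies unconditionally.
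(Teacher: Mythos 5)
Your construction is essentially the paper's own argument, compressed into one step: since $\psi$ is $c_L$-convex one has $\tilde\psi=(\psi^{c_L})^{c_L}=\psi$, and your computations at the pair $(\gamma(\sigma_1),\gamma(\sigma_2))$ are exactly the paper's propagation of the endpoint identity $\psi^{c_L}(\gamma(1))-\psi(\gamma(0))=c_L(\gamma(0),\gamma(1))$ to interior points of the minimizer via the triangle inequality (an equality along $\gamma$), which the paper organizes instead as two special cases ($\sigma_1\equiv 0$ with potential $\psi$, then $\sigma_2\equiv 1$ with the $c_L$-concave potential $\psi^{c_L}$) composed in a third step. Two points need repair. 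First, the endpoint identity is only known $\pi$-almost everywhere, not on all of $\supp\pi$ (no continuity of $\psi$ is available), so instead of picking $\gamma$ with $(\gamma(0),\gamma(1))\in\supp\pi$ you should fix a set $\Sigma$ of full $\pi$-measure where the identity holds and argue for $\gamma\in(\ev_0,\ev_1)^{-1}(\Sigma)$, on which $\Pi$ is concentrated; this suffices since the conclusion is only needed $\tilde\pi$-almost surely. Second, your final appeal to Proposition \ref{P2a} uses $\psi\in L^1(\mu)$, which is not part of the definition of a DKP solution (only $\mu$-a.s.\ finiteness is assumed), so the promotion of the identity to every optimal coupling of $(\mu_1,\mu_2)$ is not justified as written; note, however, that the paper's proof also only establishes the relation along the interpolated coupling induced by $\Pi$, which is what is actually used later, so this extra step is not essential.
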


\begin{proof}
(i) First consider the special case $\sigma_1\equiv 0$. The assertion then claims that for $\sigma\colon \Gamma\to [0,1]$ the DKP has a solution for the
martingales $\mu$ and $\nu_\sigma:=(\ev\circ\sigma)_\sharp \Pi$. 

Set $\pi:=(\ev_0,\ev_1)_\sharp \Pi$ and let $\psi\colon M\to \R\cup \{\infty\}$ be a solution of the DKP, i.e. $\psi|_{\supp\mu} \not\equiv \infty$ and
\begin{equation}\label{E0}
\psi^{c_L}(y)-\psi(x)=c_L(x,y),\text{ $\pi$-almost everywhere.}
\end{equation}
Choose a set $\Sigma\subset \supp \pi$ of full $\pi$-measure where \eqref{E0} is satisfied. Then $\Pi$ is concentrated on $\Sigma_\Gamma :=(\ev_0,\ev_1)^{-1}(\Sigma)$.
By definition of $\psi^{c_L}$ one has 
$$\psi^{c_L}(y)-\psi(x)\le c_L(x,y),\text{ for all }x,y\in M.$$
Assume that there exists $\gamma\in \Sigma_\Gamma$ and $t\in [0,1]$ such that 
$$\psi^{c_L}(\gamma(t))-\psi(\gamma(0))< c_L(\gamma(0),\gamma(t)).$$
Then there exists $x\in M$ with 
$$\psi(x)+c_L(x,\gamma(t))<\psi(\gamma(0))+c_L(\gamma(0),\gamma(t))$$
by the definition of the $c_L$-transform $\psi^{c_L}$. Adding $c_L(\gamma(t),\gamma(1))$ to both sides and applying the triangle inequality, which is an equality on the right hand side, one obtains
$$\psi^{c_L}(\gamma(1))\le \psi(x)+c_L(x,\gamma(1))<\psi(\gamma(0))+c_L(\gamma(0),\gamma(1)),$$
which implies 
$$\psi^{c_L}(\gamma(1))-\psi(\gamma(0))< c_L(\gamma(0),\gamma(1)),$$
a contradiction. Therefore one has 
$$\psi^{c_L}(\gamma(t))-\psi(\gamma(0))= c_L(\gamma(0),\gamma(t))$$
for all $\gamma\in \Sigma_\Gamma$ and the lemma in the special case $\sigma_1\equiv 0$ is proved.

(ii) Second consider the case $\sigma_1\equiv 0$. By definition $\psi^{c_L}$ is a $c_L$-concave function and since $\psi$ is 
$c_L$-convex one has, cf. \cite{villani},
$$\psi(x)=\sup\{\psi^{c_L}(y)-c_L(x,y)|\; y\in M\}=(\psi^{c_L})^{c_L}(x)$$
for all $x\in M$.
This implies that the transport problem between $\mu$ and $\nu$ has a solution of the DKP if and only if there exists a $c_L$-concave function 
$\phi\colon M\to \R\cup\{-\infty\}$ with 
$$\phi(y)-\phi^{c_L}(x)=c_L(x,y),\text{ $\pi$-almost everywhere}$$
for all optimal coupling $\pi$ of $\mu$ and $\nu$. With an analogous argument as in case (i) one obtains 
$$\phi(\gamma(1))-\phi^{c_L}(\gamma(t))=c_L(\gamma(t),\gamma(1))$$
for all $\gamma\in \Sigma_\Gamma$ and $t\in [0,1]$ with the notation of the special case. Setting $\phi=\psi^{c_L}$ yields the assertion.

(iii) To complete the proof consider the succession of first the intermediate transport between $\sigma_1'\equiv 0$ and $\sigma_2'\equiv \sigma$ and second 
the intermediate transport between $\sigma_1''\equiv \sigma_1$ and $\sigma_2''\equiv 1$.
\end{proof}

For $v\in T\R^m\cong TU$ denote with $v_H$ the projection of $v$ along $\partial_m$ onto span$\{\partial_1,\ldots,\partial_{m-1}\}$. 
Further let $\gamma_v$ be the unique $\Phi$-orbit with $\dot\gamma_v(0)=v$ for $v\in \mathcal{C}$.

\begin{lemma} 
For $\e_0>0$ sufficiently small there exists $C_0<\infty$ and $\phi_0>0$ such that for all $v\in \partial\mathcal{C}\cap T^1B_{3\e_0}(0)$ and $t>0$ such that 
$$\dist(\gamma_v(0),\gamma_v(t))\in [\e_0,6\e_0]$$ 
the intersection 
$$P_{v,t}:= \partial J^-(\gamma_v(t))\cap (\{\gamma_v(0)\}+\R^{m-1}\times \{0\})$$
is a smooth hypersurface in $\{\gamma_v(0)\}+\R^{m-1}\times \{0\}$ with 
\begin{itemize}
\item[(a)] the norm of the second fundamental form bounded by $C_0$ and 
\item[(b)] $\angle(v_H,TP_{v,t})>\phi_0$.
\end{itemize}
\end{lemma}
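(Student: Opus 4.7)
The plan is to work in the coordinate chart $U\cong\R^m$ provided by assumptions (1)--(3) and to reduce everything to a compactness argument over the closed parameter set
\[\mathcal{K}:=\bigl\{(v,t)\colon v\in\partial\mathcal{C}\cap T^1B_{3\e_0}(0),\ \dist(\gamma_v(0),\gamma_v(t))\in[\e_0,6\e_0]\bigr\},\]
which, after shrinking $\e_0$ so that $\gamma_v(t)$ stays inside the domain where $\exp^{\mathbb L}$ is a diffeomorphism, is a compact subset of $TU\times\R$.

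First I would establish smoothness of $P_{v,t}$. By (3), $\partial J^-(\gamma_v(t))$ agrees on a neighborhood with the $\exp^{\mathbb L}_{\gamma_v(t)}$-image of the past null cone in $T_{\gamma_v(t)}M$, which is smooth away from the apex $\gamma_v(t)$. The condition $\dist(\gamma_v(0),\gamma_v(t))\ge\e_0$ keeps us uniformly bounded away from the apex, so $\partial J^-(\gamma_v(t))$ is a smooth null hypersurface on a neighborhood of $\gamma_v(0)$. Transversality with the horizontal affine hyperplane $\{\gamma_v(0)\}+H$ is automatic: $\partial_m\in\inte\mathcal{C}$ is timelike, hence cannot lie in a null tangent hyperplane, so $\partial_m\notin T_{\gamma_v(0)}\partial J^-(\gamma_v(t))$. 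The implicit function theorem then gives $P_{v,t}$ as a smooth hypersurface in $\{\gamma_v(0)\}+H$.

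For part (a) I would argue by continuity: the defining function of $\partial J^-(\gamma_v(t))$ depends smoothly on $(v,t)\in\mathcal{K}$ through the exponential map, and transversality ensures that its restriction to the horizontal slice is a smooth defining function for $P_{v,t}$ whose first and second derivatives depend continuously on $(v,t)$. The norm of the second fundamental form is therefore a continuous function on the compact set $\mathcal{K}$, and admits a uniform upper bound $C_0<\infty$.

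Part (b) is the geometric heart of the lemma. Since $\gamma_v$ is the null generator of $\partial J^-(\gamma_v(t))$ passing through $\gamma_v(0)$, one has $v\in T_{\gamma_v(0)}\partial J^-(\gamma_v(t))$. Writing $v=v_H+v^m\partial_m$ with $v^m>0$ (from future-directedness of $v$ combined with $\partial_m\in\inte\mathcal{C}$), the inclusion $v_H\in T_{\gamma_v(0)}\partial J^-(\gamma_v(t))$ would force $\partial_m$ into the same null hyperplane, a contradiction. Hence $v_H\notin TP_{v,t}$ pointwise and $\angle(v_H,TP_{v,t})>0$; together with a positive lower bound on $\|v_H\|$ (since $v$ is null of unit Riemannian norm, it cannot be parallel to the timelike direction $\partial_m$), compactness of $\mathcal{K}$ produces the uniform lower bound $\phi_0>0$. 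The main obstacle I expect is simply organizing the uniform bookkeeping on $\mathcal{K}$ and verifying the required smooth dependence through the exponential map; once the transversality between $H$ and the null tangent hyperplane is recorded, both claims reduce to continuity of smooth data on a compact parameter set.
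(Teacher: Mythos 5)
Your overall strategy (set up a compact parameter set $\mathcal{K}$ and win by continuity of smooth data) is genuinely different from the paper's proof, which instead rescales: with $r_\e(x)=\e x$ the Lagrangians $\mathbb{L}_\e=\e^{-2}r_\e^*\mathbb{L}$ converge in every $C^k$-topology on compacta to the constant Lagrangian $\mathbb{L}_0=\mathbb{L}(0)$, whose geodesics are straight lines and whose cones are affine, so (a) and (b) hold trivially for the model; $C^k$-convergence of the Euler--Lagrange flows then transfers the bounds for $\e_0$ small. Since $C_0,\phi_0$ are allowed to depend on $\e_0$, a direct compactness argument is in principle admissible, but as written yours has a genuine gap in the transversality step. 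You argue: $\partial_m$ is timelike, hence $\partial_m\notin T_p\partial J^-(\gamma_v(t))$, hence $\partial J^-(\gamma_v(t))$ is transversal to $\{\gamma_v(0)\}+\R^{m-1}\times\{0\}$. This implication is false: two hyperplanes of $\R^m$ are transversal iff they are distinct, and knowing that the null tangent hyperplane omits $\partial_m$ does not rule out that it \emph{equals} the horizontal hyperplane, which also omits $\partial_m$. What is actually needed is that $T_p\partial J^-(\gamma_v(t))$, which contains the (null) generator direction through $p$, cannot be horizontal, i.e.\ that the translated hyperplanes $\{\gamma_v(0)\}+\R^{m-1}\times\{0\}$ contain no causal vectors. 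That is not an assumption of the setup (only the slice $\R^{m-1}\times\{0\}=H$ itself is spacelike, and only $\partial_m\in\inte\mathcal{C}$ is assumed); it holds only after shrinking $\e_0$, by continuity of $\mathcal{C}$ from the center, and it is exactly the kind of ``uniformly close to the constant model'' information that the paper's rescaling supplies automatically.

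The same missing input is used silently at several other places: in (b), the decomposition $v=v_H+v^m\partial_m$ with $v^m>0$ does not follow from future-directedness of $v$ and $\partial_m\in\inte\mathcal{C}$ alone, but from spacelikeness of the horizontal plane at the relevant point; moreover (b) and the smoothness/transversality claims must be verified at \emph{all} points $p\in P_{v,t}$, not only at $\gamma_v(0)$, which requires running your generator argument with the null generator through $p$. Finally, for the continuity-on-$\mathcal{K}$ step you need $P_{v,t}$ to be compact, to stay inside the region where $\exp^{\mathbb{L}}$ is a diffeomorphism, and to keep a distance bounded below from the vertex $\gamma_v(t)$ (so that the cone is smooth there); none of this is automatic and each again needs the near-model structure for small $\e_0$ (e.g.\ a uniform lower bound on the vertical displacement $\tau$- resp.\ $x_m$-gain along causal curves of length $\ge\e_0$). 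So the compactness route can be repaired, but the transversality argument as stated fails, and the smallness of $\e_0$ must be used for more than keeping $\gamma_v(t)$ in the domain of the exponential map; the paper's blow-up to $\mathbb{L}_0$ is precisely the device that packages all of these uniformity statements at once.
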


\begin{proof}
Consider for $\e>0$ the map 
$$r_\e\colon B_1(0)\to B_\e(0),\; x\mapsto \e x.$$ 
The Lagrangians $\mathbb{L}_\e:= \frac{1}{\e^2}r_\e^*\mathbb{L}$ converge uniformly  to $\mathbb{L}_0:=\mathbb{L}(0)$
on every compact subset of $TB_1(0)$ for $\e\to 0$ in every $C^k$-topology. The Euler-Lagrange flow of $\mathbb{L}_\e$ then converges uniformly on compact subsets 
in every $C^k$-topology to the Euler-Lagrange flow of $\mathbb{L}_0$. Note that the Euler-Lagrange equation for $\mathbb{L}_0$ is $\ddot x=0$. Thus one has 
$\exp_x^{\mathbb{L}_0}(\mathcal{C}_0)=x+\mathcal{C}_0$ and 
therefore the assertion (a) and (b) hold for $\mathbb{L}_0$. Since the Riemannian metric $h$ is euclidian on $B_1(0)$ one has $\frac{1}{\e^2}r_\e^*h=h$. This together 
with the fact that 
$$\exp^{\mathbb{L}_\e}_x(V_{\e,x}\cap \partial\mathcal{C}_{\e,x})=\partial J^+_\e(x)\subset B_1(0)$$ 
for some neighborhood $V_{\e,x}$ of $0_x$ in $T_x B_1(0)$ one obtains the lemma for $\e>0$ sufficiently small.
\end{proof}

Statement $(\mathrm{b})$ is equivalent to requiring 
$$\angle (v_H,N_{v,t})\le \frac{\pi}{2}-\phi_0$$ 
where $\angle(v,w)$ denotes the euclidian angle between $v$ and $w$ and $N_{v,t}$ denotes the inward pointing unit normal to 
$P_{v,t}$. Property (a) implies that for $\e_1:=\frac{1}{\sqrt{C_0}}>0$ one has 
$$B_{\e_1}(\gamma_v(0)+\e_1N_{v,t})\cap (\{\gamma_v(0)\}+\R^{m-1}\times \{0\}) \subset J^-(\gamma_v(t))\cap (\{\gamma_v(0)\}+\R^{m-1}\times \{0\}).$$ 
This and (b) then imply that 
$$\{\gamma_v(0)\}+\text{Cone}\left(v,\frac{\phi_0}{2},\e_2\right)\subset J^-(\gamma_v(t))\cap (\{\gamma_v(0)\}+\R^{m-1}\times \{0\})$$
where 
$$\text{Cone}(v,\phi,\e):=\{w\in \R^{m-1}\times\{0\}|\angle(v_H,w)<\phi, |w|<\e\}$$
and
$$\e_2:=2\e_1\sin\frac{\phi_0}{2}.$$

Now for $p\in B_{3\e_0}(0)$, $v\in \partial \mathcal{C}_p$, $q\in \{p\}-\text{Cone}(v,\frac{\phi_0}{4},\e_2)$ and $w\in \partial 
\mathcal{C}_q$ with $\angle(w_H,v_H)<\frac{\phi_0}{4}$ one has 
$$B_{\dist(p,q)\sin\frac{\phi_0}{4}}(p)\cap (\{\gamma_v(0)\}+\R^{m-1}\times \{0\})\subset J^-(\gamma_w(t))$$
for $\dist(q,\gamma_w(t))\in [\e_0,3\e_0]$. 

Abbreviate 
$$A:=\supp [(\ev\nolimits_0)_\sharp(\Pi|_{\Gamma_0})].$$
Define a map 
$$T\colon \supp (\Pi|_{\Gamma_0})\to S^{m-2},\; \gamma \mapsto \dot\gamma(0)_H.$$
Since $(\pi_{TM}\circ T)_\sharp \Pi|_{\Gamma_0}=\mu|_A$ and $\mu(A)\neq 0$ one has 
$$T_\sharp (\Pi|_{\Gamma_0})\neq 0$$
where $\pi_{TM}\colon TM\to M$ denotes the canonical projection. Choose $v_0\in S^{m-2}$ such that 
$$T_\sharp (\Pi|_{\Gamma_0})(B_{\frac{\phi_0}{4}}(v_0))>0.$$ 
Then $A':= \pi_{TM}(B_{\frac{\phi_0}{4}}(v_0))$ has positive measure with respect to $\mathcal{L}_H$, since $\mu|_A\ll \mathcal{L}_H$ . 
Let $p$ be a Lebesgue point of $A'$, i.e. 
$$\lim\nolimits_{\delta\downarrow 0}\frac{\mathcal{L}_H(A'\cap B_\delta(p))}{\mathcal{L}_H(B_\delta(p))}=1.$$
Then there exist $\e_3,\e_4>0$ such that for the unique $v\in\partial\mathcal{C}$ with $v_H=v_0$
$$\mathcal{L}_H\left(A'\cap B_{2\e_3}(p)\setminus B_{\e_3}(p)\cap \left(\{p\}-\text{Cone}\left(v,\frac{\phi_0}{4},\e_2\right)\right)\right)\ge \e_4.$$
Choose polar coordinates $(r,\theta_1,\ldots ,\theta_{m-2})$ on $B_{2\e_3}(p))\subset \R^{m-1}\times \{0\}$ centered at $p$. By Fubini's Theorem there 
exists $(\eta_1,\ldots ,\eta_{m-2})$ such that 
$$\int_{\e_3}^{2\e_3} \chi_{A'}(r,\eta_1,\ldots ,\eta_{m-2}) rdr\ge \e_4.$$ 
Thus the $1$-dimensional Lebesgue measure 
$$\mathcal{L}_1(A'\cap \{r\in(\e_3,2\e_3),\theta_1=\eta_1, \ldots ,\theta_{m-2}=\eta_{m-2}\})\ge \frac{\e_4}{2\e_3}.$$

Recall Lemma \ref{lemmadots}:
\begin{lem}
Let $[a,b]\subset \R$, $\e>0$ and $B\subset [a,b]$ Borel measurable be given with $\mathcal{L}_1(B)\ge \e(b-a)$. Then for all $n\in\N$ there exists 
$\{t_i\}_{1\le i\le n}\subset B$ with $t_1<\ldots <t_n$ and $t_{i+1}-t_i\ge \frac{\e}{2n}$.
\end{lem}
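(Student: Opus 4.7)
The statement is a restatement of Lemma \ref{lemmadots} from Section \ref{1-dim-counterexample} (the displayed bound $\tfrac{\e}{2n}$ should be understood as $\tfrac{\e}{2n}(b-a)$ to match the earlier version, or equivalently as the normalized case $b-a=1$). Consequently, the plan is to reproduce the greedy selection strategy already used there: pick the points $t_1<\ldots<t_n$ one at a time, each from the portion of $B$ lying strictly beyond $t_{k-1}+\delta$, where $\delta:=\tfrac{\e}{2n}(b-a)$ is the target gap between consecutive points.

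More concretely, set $t_0:=a$. Inductively, I would consider the continuous non-decreasing function
\[
F_k(t):=\mathcal{L}_1\bigl(B\cap [t_{k-1}+\delta,\,t]\bigr),
\]
which starts at $0$ at $t=t_{k-1}+\delta$. Provided $B\cap [t_{k-1}+\delta,b]$ still has positive measure, $F_k$ eventually takes values in the interval $(0,\delta)$, and I can choose $t_k\in B$ such that $F_k(t_k)\in(0,\delta)$. By construction $t_k\in B$ and $t_k\ge t_{k-1}+\delta$, which is exactly the required separation.

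The only thing that needs checking is that the induction does not abort before step $n$ is reached. This follows from a straightforward bookkeeping estimate: after $k$ iterations, the mass of $B$ that has already been "consumed", namely $\mathcal{L}_1(B\cap [a,t_k+\delta])$, is bounded above by $k\delta=\tfrac{k\e}{2n}(b-a)$, because each increment $B\cap [t_{j-1}+\delta,t_j+\delta]$ contributes at most $\delta$ (the piece in $(t_j,t_j+\delta)$ has measure at most $\delta$ and the piece in $[t_{j-1}+\delta,t_j]$ was chosen to have measure at most $\delta$). For $k\le n-1$ this gives at most $\tfrac{n-1}{2n}\,\e(b-a) < \e(b-a) \le \mathcal{L}_1(B)$, so $B\cap [t_k+\delta,b]$ still carries positive measure and a next point $t_{k+1}$ can indeed be selected. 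The construction therefore terminates with exactly $n$ points having the claimed properties.

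No substantial obstacle is foreseen: the whole argument is a one-dimensional, purely measure-theoretic greedy selection, essentially identical to the original proof of Lemma \ref{lemmadots}. The only point requiring any care is the choice of $\delta$ and the uniform bookkeeping that ensures the process survives $n$ rounds, both of which are immediate from the hypothesis $\mathcal{L}_1(B)\ge \e(b-a)$.
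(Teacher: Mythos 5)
Your proposal is correct and uses essentially the same argument as the paper's proof of Lemma \ref{lemmadots}: the greedy selection of points from $B$ with measure bookkeeping to guarantee the process survives $n$ rounds, together with the correct reading of the gap as $\frac{\e}{2n}(b-a)$. The only slip is the claim that each increment $B\cap[t_{j-1}+\delta,t_j+\delta]$ contributes at most $\delta$ — your own parenthetical shows it is at most $2\delta$, so after $k\le n-1$ steps the consumed mass is at most $\frac{k}{n}\e(b-a)\le\frac{n-1}{n}\e(b-a)$, which is still strictly less than $\e(b-a)\le\mathcal{L}_1(B)$, exactly as in the paper, so the argument goes through unchanged.
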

Applying Lemma \ref{lemmadots} to 
$$B:=A'\cap \{r\in(\e_3,2\e_3),\theta_1=\eta_1, \ldots ,\theta_{m-2}=\eta_{m-2}\}$$ 
yields for every $n\in\N$ points 
$$x_1,\ldots x_n\in A'\cap \{r\in(\e_3,2\e_3),\theta_1=\eta_1, \ldots ,\theta_{m-2}=\eta_{m-2}\}$$ 
and $y_1,\dots,y_n\in M$ such that $(x_i,y_i)\in\supp \pi$ for the optimal coupling 
$\pi$ induced by $\Pi$, $x_{i+1}\in J^-(y_i)$, $\dist(x_i,x_{i+1})\ge \frac{\e_4}{4n\e_3}$ and $B_{\e_3\sin \frac{\phi_0}{4}}(p)\subset J^-(y_n)$. 
Using the exponential map of $\mathbb{L}$ one sees that there exists $C>0$ such that $c_L(x_{i+1},y_i)\le -\frac{C}{\sqrt{n}}$. Then one has for every point 
$x'\in B_{\e_3\sin \frac{\phi_0}{4}}(p)$
$$\psi(x')\le \psi(x_1)+\sum c_L(x_{i+1},y_i)\le \psi(x_1)-(n-1)\frac{C}{\sqrt{n}}\le \psi(x_1)-\frac{C}{2}\sqrt{n}$$
for all $n\in \N$ where $\psi$ denotes the solution to the DKP for $(\mu,\nu)$. Thus 
$$\psi|_{B_{\e_3\sin \frac{\phi_0}{4}}(p)}\equiv -\infty$$ 
therefore contradicting that $\psi$ is $c_L$-convex.

\subsection{Proof of Theorem \ref{thm3}(ii)}
One can prove Theorem \ref{thm3}(ii) in the same fashion as Theorem \ref{thm2}(i), but for the sake of avoiding repetition Theorem \ref{thm3}(ii) is reduced to 
Theorem \ref{thm2}(i).

Let  $f_\mu\in L^1(\mathcal{L})$ be the density of $\mu$ with respect to the Lebesgue measure $\mathcal{L}$ on $M$. 
If $\Pi(\Gamma_0)>0$ there exists a chart $U\to \R^m$ of $M$ such that $U$ is foliated by spacelike hypersurfaces
$\{H_s=\R^{m-1}\times\{s\}\}_{s\in\R}$ and $(\ev_0)_\sharp \Pi|_{\Gamma_0}(U)>0$. By \cite[Corollary 3.5]{suhr16} one can then assume that $\supp\mu$ is contained in $U$. 
By Fubini's Theorem the restriction of $f_\mu$ to $H_s$ is integrable with respect to the Lebesgue measure $\mathcal{L}_{m-1}$ on $\R^{m-1}$ for almost 
all $s\in\R$. For $(x_m)_\sharp\mu$-almost all $s\in \R$ one has 
$$m_s:= \int_{H_s}f_\mu d\mathcal{L}_{m-1} >0$$ 
where $x_m$ denotes the $m$-th
coordinate function on $\R^m$. For $s$ with $m_s\in (0,\infty)$ define $\mu_s$ by 
$$\mu_s(A):=\frac{1}{m_s}\int_{A\cap H_s} f_\mu d\mathcal{L}_{m-1}.$$ 
Note that $\mu_s$ is a probability measure on $H_s$. Consider the disintegration $\{\Pi_s\}_{s\in\R}$ of $\Pi$ along $X_m:=x_m\circ \ev_0\colon \Gamma\to\R$. 
Then one has $(\ev_0)_\sharp \Pi_s=\mu_s$ for $\mu_m$-almost all $s$. This can be seen as follows: Obviously one has $(X_m)_\sharp \Pi=\mu_m$ 
since $(\ev_0)_\sharp \Pi= \mu$. It follows that for all Borel measurable $B\subset M$ one has:
\begin{align*}
&\int_\R (\ev\nolimits_0)_\sharp \Pi_s(B)d\mu_m=\int_\R \Pi_s(\ev_0\nolimits^{-1}(B))d\mu_m\\
&=\int_\R \Pi_s(\ev_0\nolimits^{-1}(B))d(X_m)_\sharp \Pi=\Pi(\ev_0\nolimits^{-1}(B))=\mu(B)
\end{align*}
Thus one has 
$$\mu(.)=\int_\R (\ev\nolimits_0)_\sharp \Pi_s(.)d\mu_m$$ 
and the claim follows from the uniqueness part of the Disintegration Theorem. Finally define $\nu_s:=(\ev_1)_\sharp \Pi_s$. 

\begin{lemma}
$\Pi_s$ is an optimal dynamical coupling of $\mu_s$ and $\nu_s$ for $\mu_m:=(x_m)_\sharp\mu$-almost all $s\in\R$. Further $\psi$ solves the DKP for 
$(\mu_s,\nu_s)$ for $\mu_m$-almost all $s$.
\end{lemma}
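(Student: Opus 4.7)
The plan is to establish both claims via a standard disintegration/Fubini argument, using that push-forward commutes with disintegration and that the infimum in the Monge--Kantorovich problem localizes along the fibers of any measurable partition of the source.

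For optimality of $\Pi_s$, set $\pi_s:=(\ev_0,\ev_1)_\sharp \Pi_s$. First I would verify that $\pi_s$ is a coupling of $\mu_s$ and $\nu_s$: its first marginal is $(\ev_0)_\sharp \Pi_s=\mu_s$ by the construction in the excerpt, and its second marginal is $(\ev_1)_\sharp \Pi_s=\nu_s$ by definition of $\nu_s$. Since $(X_m)_\sharp\Pi=\mu_m$ and push-forward commutes with disintegration, one has $\pi=\int_\R \pi_s\,d\mu_m(s)$, and Fubini gives $\int c_L\,d\pi=\int_\R \int c_L\,d\pi_s\,d\mu_m(s)<\infty$, so the fiber cost is $\mu_m$-a.e. finite. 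Suppose now, for contradiction, that there is a Borel set $E\subset \R$ with $\mu_m(E)>0$ such that $\Pi_s$ fails to be an optimal dynamical coupling of $\mu_s$ and $\nu_s$ for every $s\in E$. By Proposition \ref{dynoptcou} an optimal $\widetilde\Pi_s$ exists on every fiber, and by the measurable selection theorem for optimal couplings (cf. \cite{villani}) such a family $\{\widetilde\Pi_s\}_{s\in \R}$ can be chosen in a Borel manner, with $\int c_L\,d(\ev_0,\ev_1)_\sharp\widetilde\Pi_s<\int c_L\,d\pi_s$ on $E$. Replacing $\Pi_s$ by $\widetilde\Pi_s$ on $E$ and leaving it unchanged on $\R\setminus E$, then integrating back, one produces a dynamical coupling $\widetilde\Pi$ of $\mu$ and $\nu$ with strictly lower cost than $\Pi$, contradicting the optimality of the latter.

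For the DKP statement, since $\psi$ solves the DKP for $(\mu,\nu)$ there is a Borel set $\Sigma\subset M\times M$ with $\pi(\Sigma^c)=0$ on which $\psi^{c_L}(y)-\psi(x)=c_L(x,y)$. From $\pi=\int \pi_s\,d\mu_m(s)$ one deduces $\pi_s(\Sigma^c)=0$ for $\mu_m$-a.e. $s$, so the identity holds $\pi_s$-almost surely. Analogously, $\psi<\infty$ holds $\mu$-almost surely, hence $\mu_s$-almost surely for $\mu_m$-a.e. $s$ by the disintegration $\mu=\int \mu_s\,d\mu_m(s)$ established in the excerpt. Since $c_L$-convexity is a property of the function $\psi$ itself and is independent of the marginals, it persists. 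Combining these three facts, $\psi$ solves the DKP for $(\mu_s,\nu_s)$ for $\mu_m$-a.e. $s$.

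The main obstacle is the measurable selection step in the optimality argument: one needs a Borel family of optimal couplings on the fibers. In the globally hyperbolic Lorentz--Finsler setting this follows from the continuity of $c_L$ on its effective domain and the standard Kuratowski--Ryll-Nardzewski-type selection arguments that underlie Proposition \ref{dynoptcou}; once that is in place, the rest is routine Fubini bookkeeping.
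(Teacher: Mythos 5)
Your argument is correct in substance but, for the first assertion, takes a genuinely different route from the paper. You prove optimality of $\Pi_s$ directly by gluing: if $\pi_s:=(\ev_0,\ev_1)_\sharp\Pi_s$ were non-optimal on a set $E$ of positive $\mu_m$-measure, a measurable selection of strictly better fiber couplings integrates to a coupling of $(\mu,\nu)$ beating $\pi$. The paper never invokes measurable selection: it proves the dual statement first, by contradiction --- if $\int c_L\,d\pi_s>\int\psi^{c_L}\,d\nu_s-\int\psi\,d\mu_s$ on a set $B$ of positive $\mu_m$-measure, then the gap is at least some $\delta>0$ on a non-negligible $B_\delta$, and integrating in $s$ (using $c_L\ge\psi^{c_L}-\psi$ pointwise elsewhere) contradicts that $\psi$ solves the DKP for $(\mu,\nu)$ --- and then deduces optimality of $\pi_s$ from the dual statement, because the admissible pair $(\psi,\psi^{c_L})$ certifies optimality of any fiber coupling attaining the dual value. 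Your route buys independence from $\psi$ in the localization of optimality (it works even when no dual solution exists), at the price of needing lower semicontinuity of $c_L$, measurability of $s\mapsto(\mu_s,\nu_s)$, and a selection theorem for optimal plans, all of which the paper's certificate argument makes unnecessary; conversely, your treatment of the dual statement via $\pi(\Sigma^c)=0$ and $\pi=\int\pi_s\,d\mu_m$ is more direct than the paper's integral computation. Two points should be made explicit: the paper's definition of a DKP solution requires the equality $\psi^{c_L}(y)-\psi(x)=c_L(x,y)$ almost surely with respect to \emph{every} optimal coupling of $(\mu_s,\nu_s)$, not only the disintegrated $\pi_s$; this does follow from what you prove, since by your first claim $\pi_s$ is optimal, so any optimal coupling of $(\mu_s,\nu_s)$ has the same cost, which equals the dual value, and the pointwise inequality then forces almost-sure equality --- but the step is needed. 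Also, in the gluing argument it suffices to select optimal \emph{plans} $\widetilde\pi_s$ rather than dynamical couplings, and one should note that $|c_L|\le\overline{\tau}$ on $J^+$ together with $\overline{\tau}\in L^1(\pi)$ gives the fiberwise integrability that lets the strict inequality survive integration over $E$.
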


\begin{proof}
The first assertion follows from the second since the pair $(\psi,\psi^{c_L})$ is admissible (see Section \ref{results}). Define $\pi_s:=(\ev_0,\ev_1)_\sharp \Pi_s$. Assume that $\psi$ does not 
solve the DKP for $(\mu_s,\nu_s)$ for $s$ in a set $B\subset \R$ of positive $\mu_m$-measure, i.e. 
$$\int_{M\times M} c_L(x,y)d\pi_s >\int_M \psi^{c_L}(y)d\nu_s -\int_M \psi(x) d\mu_s$$ 
for all $s\in B$. Then for some $\delta>0$ one has 
$$\int_{M\times M} c_L(x,y)d\pi_s -\delta>\int_M \psi^{c_L}(y)d\nu_s -\int_M \psi(x) d\mu_s$$
for $s$ in a smaller but $\mu_m$-non-negligible set $B_\delta\subset \R$. This implies that 
$$\int_{M\times M}c_L(x,y) d\pi -\delta \int_{B_\delta}m_s d\mu_m> \int_M \psi^{c_L}(y)d\nu -\int_M \psi(x) d\mu$$
since $c_L(x,y)\ge \psi^{c_L}(y)-\psi(x)$ for all $(x,y)$. Note that $\int_{A_\e} m_sd\mu_m>0$ since $B_\delta$ is $\mu_m$-non-negligible.  This contradicts the 
assumption that $\psi$ is a solution to the dual problem for $(\mu,\nu)$ since $\pi$ is minimal.
\end{proof}

Theorem \ref{thm2}(i) then yields that for almost all $s\in \R$ one has $\Pi_s(\Gamma_0)=0$. Since 
$$\Pi(\Gamma_0)=\int_\R m_s\Pi_s(\Gamma_0) ds$$ 
the claim follows. The last equation follows from the respective statement about $\mu_s$, i.e. 
$$\mu(A)=\int_\R m_s \mu_s(A)ds$$ 
for all measurable $A\subset \R^m$.

\subsection{Proof of Theorem \ref{thm4}}

The proof of Theorem \ref{thm4} relies on ideas of \cite{CH2015NBTrans}, see also 
\cite{Gigli2012,CM2016TransMapsMCP,kell17}. Note, however, there is no unique equivalent 
to the assumption of achronality resp. acausality of the support of the second measure. 
Indeed, the Monge problem is in general highly non-unique. However, the proof in the 
non-relativistic as well as the relativistic setting relies essentially on the following
two properties: Geodesics with endpoints in a given set are non-branching  
(Lemma \ref{lem:strong non-branching}) and that there is a weak form of the measure contraction property (Lemma \ref{L1}). 
The latter holds due to differentiability of the time function and the exponential map. 

For the proof it suffices to consider the case that both $\supp\mu$ and $\supp\nu$ are compact and disjoint. This follows from the observation that absolute continuity 
is equivalent to absolute continuity on every compact subset.

Recall that
$$\Phi^\mathbb{L}\colon \mathbb{U}\subset \R\times TM\to TM$$ 
denotes the Euler-Lagrange flow of 
$\mathbb{L}$. Set 
$$\{1\}\times \mathbb{V}:=(\{1\}\times TM)\cap \mathbb{U}.$$
By \cite[Proposition 3.14]{suhr16} the map 
$$\exp^\mathbb{L}\colon\mathbb{V}\to M\times M, \; v\mapsto (\pi_{TM}(v),\pi_{TM}\circ \Phi^\mathbb{L}(1,v))$$
is a $C^1$-diffeomorphism of a neighborhood of the zero section onto its image and smooth outside $T^0M$. Here 
$$\pi_{TM}\colon TM\to M$$ 
denotes the canonical projection. 
Set 
$$\exp_x\colon \mathbb{V}\cap TM_x\to M,\; v\mapsto \pi_{TM}\circ \Phi^{\mathbb{L}}(1,v).$$
Further since $\partial_t (\pi_{TM}\circ \Phi^\mathbb{L}(t,v))=\Phi^\mathbb{L}(t,v)$ and $\Phi^\mathbb{L}(1,tv)=\Phi^\mathbb{L}(t,v)$ one has 
\begin{equation}\label{E2}
d(\exp_x)_0= \partial_v (\pi_{TM}\circ \Phi^\mathbb{L}(1,v))|_{v=0} =\id\nolimits_{TM}
\end{equation}
via the canonical identification $TM_x\cong T(TM_x)_0$.

\begin{prop}\label{P1a}
Let $v\in \mathcal{C}_p:=\mathcal{C}\cap TM_p$ such that 
$$d(\exp_p)_v\colon T(TM_p)_v\to TM_{\exp_p^\mathbb{L}(v)}$$
is singular. Then for every $T>1$ the geodesic 
$$\eta_v\colon [0,T]\to M,\; t\mapsto \exp_p(tv)$$
is not $\mathcal{A}$-minimizing between its endpoints.
\end{prop}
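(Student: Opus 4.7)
The plan is the classical conjugate-point argument. Choose $0 \neq w \in \ker d(\exp_p)_v$ and consider the smooth variation through $\Phi^{\mathbb{L}}$-orbits
\[
\alpha(s,t) := \pi_{TM}\circ \Phi^{\mathbb{L}}(t,\, v + sw),
\]
so that $\alpha(s,0) = p$ and $\alpha(s,1) = \exp_p(v + sw)$. The variation vector field $J(t) := \partial_s \alpha(s,t)|_{s=0}$ is a Jacobi field along $\eta_v|_{[0,1]}$ with $J(0) = 0$ and $J(1) = d(\exp_p)_v(w) = 0$; it is non-trivial (else $w = 0$), so $J'(1) \neq 0$ by uniqueness for the Jacobi equation. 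The scaling property $\Phi^{\mathbb{L}}(1, tv) = \Phi^{\mathbb{L}}(t,v)$ gives $d(\exp_p)_v(v) = \dot\eta_v(1) \neq 0$, so $\ker d(\exp_p)_v$ avoids the radial direction and $J$ is transverse to $\dot\eta_v$.

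Fix $T > 1$ and extend $J$ to a piecewise smooth vector field $\widetilde J$ along $\eta_v|_{[0,T]}$ by $\widetilde J \equiv 0$ on $[1,T]$. Then $\widetilde J$ vanishes at the endpoints, solves the Jacobi equation on each smooth piece, and has a non-zero corner at $t = 1$ with jump $-J'(1)$. Applying the second variation formula for $\mathcal{A}$ along $\eta_v|_{[0,T]}$ to the family $\widetilde J + \epsilon \xi$, where $\xi$ is a smooth vector field compactly supported in a small window around $t = 1$ with $\xi(1) = J'(1)$, and integrating by parts on each smooth piece of $\widetilde J$, reduces the $O(\epsilon)$ contribution to a single non-zero corner term of definite sign. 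This sign is the wrong one for an $\mathcal{A}$-minimizer, so for small $\epsilon$ of the appropriate sign the second variation is strictly negative. Smoothing $\widetilde J + \epsilon \xi$ near $t = 1$ and exponentiating produces a causal curve from $p$ to $\eta_v(T)$ with strictly smaller action than $\eta_v|_{[0,T]}$.

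The main obstacle is verifying admissibility of the competing curve under the causality constraint, particularly in the lightlike case $v \in \partial \mathcal{C}_p$. In the timelike case $v \in \inte \mathcal{C}_p$, $\dot\eta_v$ stays in the open causal cone along the compact $[0,T]$, so any sufficiently small smooth perturbation remains causal and the argument proceeds verbatim, as in the classical second variation of action computation adapted to the Lorentz--Finsler setting, cf.\ \cite{besu17}. In the lightlike case one must orient the perturbation so that the deformed curve enters $\inte \mathcal{C}$ on a neighborhood of $t = 1$; the transversality of $J$ to $\dot\eta_v$ guarantees such an orientation, and the resulting curve is strictly timelike there, picking up strictly positive proper time at first order and therefore strictly smaller $\mathcal{A}$, as in the classical null-conjugate-point argument. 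In either case $\eta_v|_{[0,T]}$ admits a causal competitor of strictly smaller action and hence is not $\mathcal{A}$-minimizing.
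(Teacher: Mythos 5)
Your overall strategy is the same as the paper's: the paper disposes of Proposition \ref{P1a} by citation, using the classical conjugate-point argument of \cite[Proposition 7.4.1]{bcc} for $v\in\inte\mathcal{C}$ (where, as you note, $C^1$-small variations of timelike curves remain causal) and the Lorentz--Finsler null conjugate point theorem \cite[Proposition 6.8]{aj} for $v\in\partial\mathcal{C}$. Your timelike half is precisely that argument (broken Jacobi field, corner term of the wrong sign, smoothing) and is fine.

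The lightlike case, however, contains a genuine gap as written. The corner-term computation is based on ``the second variation formula for $\mathcal{A}$ along $\eta_v|_{[0,T]}$'', but along a lightlike geodesic this object does not exist: $L=-\sqrt{\mathbb{L}}$ is not differentiable on $\partial\mathcal{C}$ (the derivative of the square root blows up as $\mathbb{L}\to0$), so $\mathcal{A}$ has no first, let alone second, variation at $\eta_v$. The substitute you offer --- orient the perturbation using the transversality of $J$ to $\dot\eta_v$ so that the deformed curve enters $\inte\mathcal{C}$ near $t=1$ and ``picks up proper time at first order'' --- is not a proof: to beat $\mathcal{A}(\eta_v|_{[0,T]})=0$ you must exhibit a causal curve from $p$ to $\eta_v(T)$, with both endpoints fixed, that is timelike on a set of positive measure, and arranging $\mathbb{L}(\dot\alpha)>0$ along the whole competitor (not just in a window around the corner, while staying causal elsewhere) is exactly the delicate content of the null conjugate point theorem. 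That theorem is proved via the index form of the homogeneous energy $\mathbb{L}$ on the quotient bundle by the tangent direction, not by a sign choice in a broken-Jacobi-field variation of $\mathcal{A}$; transversality of $J$ to $\dot\eta_v$ at one instant does not control the causal character of the variation curves away from $t=1$. So in the null case you should either invoke \cite[Proposition 6.8]{aj} (as the paper does, together with \cite{minguzzi16} to match the definitions) or rerun the argument for the energy functional on the quotient bundle; as it stands, that step would fail.
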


\begin{proof}
In the case of $v\in \inte\mathcal{C}$ the claim follows mutatis mutandis as in \cite[Proposition 7.4.1.]{bcc}, since $C^1$-small variations of timelike curves
remain timelike.

The case $v\in \partial\mathcal{C}$ is the subject of \cite[Proposition 6.8]{aj}. Note that the definition of Lorentz-Finsler metrics therein is equivalent to the 
presently used by virtue of \cite{minguzzi16}. 
\end{proof}

\begin{cor}\label{C1a}
Let $K\subset M$ be compact. Consider the set $\mathcal{K}$ of $\mathcal{A}$-minimal causal $\mathbb{L}$-geodesics $\eta\colon [0,1]\to M$, i.e. $\eta(t)=
\exp_{\eta(0)}(t\dot\eta(0))$ and $\mathcal{A}(\eta)=c_L(\eta(0),\eta(1))$, with $\eta(0),\eta(1)\in K$. Then there exists a continuous function
$e\colon (0,1)\to \R_{>0}$ with 
$$\|d(\exp_{\eta(0)})_{t\dot\eta(0)}^{-1}\|\le e(t)$$
for all $\eta\in \mathcal{K}$ and $t\in [0,1)$. Further one has $\lim_{t\to 0}e(t)=1$.
\end{cor}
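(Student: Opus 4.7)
The plan is to combine Proposition \ref{P1a} with a compactness argument, exploiting that $\mathcal{A}$-minimizing causal geodesics with endpoints in a compact set form a compact family in a globally hyperbolic Lorentz-Finsler manifold.

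First I would establish pointwise invertibility. Fix $\eta\in\mathcal{K}$ and $t\in[0,1)$ and set $v:=t\dot\eta(0)$. The $\Phi^\mathbb{L}$-orbit
$$\eta_v(s):=\exp_{\eta(0)}(sv)=\eta(st)$$
is defined on $[0,1/t]$, and since $L$ is positively homogeneous of degree one the action is invariant under monotone reparametrizations, so $\eta_v$ remains $\mathcal{A}$-minimizing on $[0,1/t]$. Because $1/t>1$, the contrapositive of Proposition \ref{P1a} forces $d(\exp_{\eta(0)})_v$ to be non-singular. Hence the expression
$$e(t):=\sup_{\eta\in\mathcal{K}}\bigl\|d(\exp_{\eta(0)})_{t\dot\eta(0)}^{-1}\bigr\|$$
is at least a well-defined map $[0,1)\to(0,\infty]$, with $e(0)=1$ by \eqref{E2}.

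The second step is to show that $\mathcal{K}_0:=\{\dot\eta(0):\eta\in\mathcal{K}\}\subset TM$ is compact. Global hyperbolicity makes $(K\times K)\cap J^+$ compact, and a standard limit-curve argument of the type underlying Proposition \ref{P1} shows that any sequence in $\mathcal{K}$ admits a subsequence whose initial data converge in $TM$; continuous dependence of $\Phi^\mathbb{L}$ on initial conditions and lower semicontinuity of the action identify the limit as an element of $\mathcal{K}$. Combined with the continuity of $v\mapsto d(\exp_{\pi_{TM}(v)})_v$ on the compact set $\{sw:w\in\mathcal{K}_0,\,s\in[0,t_0]\}$ for any $t_0<1$, the first step yields a finite continuous upper bound on $\|d(\exp_{\eta(0)})_{t\dot\eta(0)}^{-1}\|$ over $[0,t_0]$. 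Varying $t_0\uparrow 1$ then produces the desired continuous $e\colon(0,1)\to\R_{>0}$.

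The normalization $\lim_{t\to 0}e(t)=1$ follows from \eqref{E2} via uniform continuity of $v\mapsto d(\exp_{\pi_{TM}(v)})_v$ on a compact neighborhood of the zero section above $\pi_{TM}(\mathcal{K}_0)$. The main obstacle is the compactness of $\mathcal{K}_0$, which must rule out degeneration of the $\Phi^\mathbb{L}$-parametrizations of the minimizers. This is most delicate for lightlike minimizers, where the Lagrangian value $L(\dot\eta(0))=c_L(\eta(0),\eta(1))$ vanishes and does not directly pin down $\dot\eta(0)$; here one falls back on the limit-curve machinery available for globally hyperbolic Lorentz-Finsler spacetimes in \cite{suhr16,besu17}.
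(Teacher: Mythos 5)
Your argument is correct and follows essentially the same route as the paper: pointwise non-singularity of $d(\exp_{\eta(0)})_{t\dot\eta(0)}$ for $t<1$ via the contrapositive of Proposition \ref{P1a}, the limit $e(t)\to 1$ via \eqref{E2}, and a uniform bound from compactness together with continuity of the differential of the exponential map. The only difference is one of detail: the paper dispatches the compactness step in a single sentence, whereas you spell out the limit-curve argument for the initial velocities, which is a reasonable elaboration rather than a new approach.
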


\begin{proof}
For a single $\mathcal{A}$-minimal geodesic $\eta\colon [0,1]\to M$ the claim follows directly from Proposition \ref{P1a}. Further by \eqref{E2} 
one has $e(t)\to 1$ for $t\to 0$.

Since $K$ is compact the corollary follows from the continuity of the differential of the exponential map.
\end{proof}

For $v\in \mathcal{C}$ consider the unique time affinely parameterized local $\mathcal{A}$-minimizer $\gamma_v\colon \R\to M$ with $\dot\gamma_v(0)=v$.
According to \cite[Section 3.5]{suhr16} $\gamma_v$ is an orbit of a flow on $\mathcal{C}\setminus T^0M$. Define the map 
$$\exp^\tau \colon \mathcal{C}\to M, \; v\mapsto \gamma_v(1).$$
Denote with $\exp^\tau_p$ the restriction of $\exp^\tau$ to $\mathcal{C}\cap TM_p$. The map satisfies $\exp^\tau(tv)=\gamma_v(t)$ which implies 
\begin{equation}\label{E2a}
d(\exp^\tau_p)_v\to \id\nolimits_{TM_p}
\end{equation}
for $v\to 0$ and
\begin{equation}\label{E2b}
d(\exp^\tau_p)_v(v)=\dot\gamma_v(1).
\end{equation}
Further by \cite[Proposition 3.5]{suhr16} there exists a smooth map 
$$s\colon \mathcal{C}\setminus T^0M\to [0,\infty)$$
such that $\exp^\tau(v)=\exp(s(v)v)$ for all $v\in \mathcal{C}\setminus T^0M$. One has
\begin{equation}\label{E2c}
d(\exp^\tau_p)_v=d(\exp_p)_{s(v)v}(ds_v\otimes v+s(v)\cdot \id\nolimits_{TM_p}).
\end{equation}

The following lemma is at the heart of the proof of Theorem \ref{thm4}. It is an easy consequence of differentiability
of the time function $\tau$ and the exponential map.

\begin{lemma}[weak measure contraction property]\label{L1}
Let $K\subset M$ be compact. Then there exists a function $f\colon(0,1)\to(0,1)$ with $\lim_{t\to 0} f(t)=1$ such that for all $y\in K$
and all measurable $A\subset K\cap J^-(y)$ it holds 
\[
\mathcal{L}(A_{t,y})\ge f(t)\mathcal{L}(A)
\]
where $A_{t,y}=\{ev_{t}\gamma\,|\,\gamma\in(ev_{0},ev_{1})^{-1}(A\times\{y\})\}$.
\end{lemma}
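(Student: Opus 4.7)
The plan is to construct, for each fixed $y\in K$, a Borel map $\Psi_{t,y}\colon K\cap J^-(y)\to M$ whose image is contained in $A_{t,y}$ and whose Jacobian is uniformly close to $1$ for small $t$; then the change of variables formula delivers the inequality. Using the Kuratowski--Ryll-Nardzewski selection theorem applied to the nonempty, compact-valued multifunction $(x,y)\mapsto\Gamma_{x\to y}$ (nonempty by Proposition \ref{P1}, compact by global hyperbolicity), I would pick a Borel selection $(x,y)\mapsto \gamma_{x,y}$ of time-affine $\mathcal{A}$-minimizers and set
$$\Psi_{t,y}(x):=\gamma_{x,y}(t)=\exp^\tau_x(tv(x,y)),\qquad v(x,y):=(\exp^\tau_x)^{-1}(y).$$
By construction $\Psi_{t,y}(A)\subseteq A_{t,y}$, so it suffices to bound $\mathcal{L}(\Psi_{t,y}(A))$ from below.

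Next I would establish $C^1$-dependence of $\Psi_{t,y}$ on $x$ together with a uniform Jacobian bound. Corollary \ref{C1a} gives that $d(\exp_x)_{sv}$ is invertible for $s\in[0,1)$, with uniformly bounded inverses, along all $\mathcal{A}$-minimizers with endpoints in $K$; by the relation \eqref{E2c} the same then holds for $d(\exp^\tau_x)_{sv}$. Consequently $v(x,y)$ is continuous on the compact set $\{(x,y)\in K\times K\,:\,x\in J^-(y)\}$, and $\Psi_{t,y}$ depends $C^1$ on $x$, the lightlike case being covered by the $v\in\partial\mathcal{C}$ part of Proposition \ref{P1a} (via \cite[Proposition 6.8]{aj}). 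Since $\Psi_{0,y}=\id$, continuity of the differential together with compactness yields a function $f\colon(0,1)\to(0,1)$ with $\lim_{t\to 0}f(t)=1$ such that
$$|\det D\Psi_{t,y}(x)|\ge f(t)\qquad\text{for all admissible }(x,y).$$

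Non-branching (Lemma \ref{lem:strong non-branching}) now implies that $\Psi_{t,y}$ is injective on $K\cap J^-(y)$: if two of the selected minimizers coincide both at parameter $t$ and at the endpoint $y$, they agree on a subinterval and hence, by non-branching, on all of $[0,1]$. The change of variables formula for injective $C^1$ maps then yields
$$\mathcal{L}(A_{t,y})\ge \mathcal{L}(\Psi_{t,y}(A))=\int_A |\det D\Psi_{t,y}(x)|\,d\mathcal{L}(x)\ge f(t)\mathcal{L}(A),$$
which is the claim.

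The principal obstacle is the uniformity of the $C^1$-regularity estimate across the timelike/lightlike boundary, since the behavior of $\exp^\tau$ at lightlike vectors is qualitatively different from the timelike case. This uniformity is precisely what Proposition \ref{P1a} together with Corollary \ref{C1a} encodes, so once these are in place the remainder is a routine compactness and continuous-dependence exercise.
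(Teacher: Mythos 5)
Your overall strategy (push $A$ forward along the selected geodesics toward $y$ via $\exp^\tau$ and bound a Jacobian using Corollary \ref{C1a} and \eqref{E2a}--\eqref{E2c}) is the same in spirit as the paper's, but the central step is asserted rather than proved, and as stated it fails. First, $v(x,y):=(\exp^\tau_x)^{-1}(y)$ is not well defined: minimizers from $x$ to $y$ need not be unique, so any Kuratowski--Ryll-Nardzewski selection of $\Gamma_{x\to y}$ is merely Borel and can jump at cut points; the claimed continuity, let alone $C^1$-dependence, of $x\mapsto v(x,y)$ does not follow. More seriously, Corollary \ref{C1a} only gives invertibility of $d(\exp_x)_{t\dot\gamma(0)}$ (hence of $d(\exp^\tau_x)_{t\dot\gamma(0)}$) for $t<1$; Proposition \ref{P1a} does \emph{not} exclude that $d(\exp^\tau_x)$ is singular at the full vector $v(x,y)$, i.e.\ the endpoint $y$ may be a conjugate point. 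So the implicit function theorem cannot be invoked to make $x\mapsto v(x,y)$ even locally Lipschitz, and without control on $\partial_x v$ the chain rule for $\Psi_{t,y}(x)=\exp^\tau_x(tv(x,y))$ gives $D\Psi_{t,y}= \partial_{\mathrm{base}}\exp^\tau(tv)+t\,d(\exp^\tau_x)_{tv}\,\partial_x v$, whose determinant is not bounded below by any function of $\|d(\exp^\tau_x)_{tv}^{-1}\|$ alone. The quantity you actually need to bound is the differential of the endpoint-constrained map $x\mapsto\gamma_{x\to y}(t)$ (a Jacobi-field estimate for fields vanishing at $y$, not at $x$), and this is precisely the content of the lemma; your proposal assumes it. The paper's proof is organized to avoid differentiating the selection: it works with the velocity set $T_{K,y}\subset TM$, writes $A_{t,y}=\exp^\tau\bigl(t(T_{K,y}\cap\pi_{TM}^{-1}(A))\bigr)$, and derives the measure bound from the uniform interior estimate \eqref{E2d} together with \eqref{E2a}, never using regularity of $x\mapsto v(x,y)$.

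Two smaller points. Your injectivity argument is misapplied: Lemma \ref{lem:strong non-branching} concerns two families with \emph{disjoint} endpoint sets and only yields a Lebesgue-null intersection, and two minimizers meeting at a single interior parameter and at the common endpoint $y$ do not a priori ``agree on a subinterval''. Injectivity of $x\mapsto\gamma_{x\to y}(t)$ for $t\in(0,1)$ is nevertheless true, but it should be argued directly: additivity of $c_L$ along minimizers makes the concatenation through the meeting point minimizing, hence (Proposition \ref{P1}) a reparametrized $\Phi^{\mathbb L}$-orbit, so the two curves lie on one orbit; the time-affine normalization gives $\tau(\gamma_i(t))=(1-t)\tau(x_i)+t\tau(y)$, forcing $\tau(x_1)=\tau(x_2)$ and hence $x_1=x_2$ since $\tau$ is strictly increasing along causal curves. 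Finally, the area/change-of-variables formula you invoke requires at least countably-Lipschitz (or a.e.\ differentiable, injective) structure of $\Psi_{t,y}$, which again presupposes the regularity that is missing. To repair the argument you would either have to follow the paper and phrase the estimate on the level of $\exp^\tau$ applied to the rescaled velocity set, or base the construction at $y$ (a past-directed $\exp^\tau_y$), where the absence of interior conjugate points along minimizers gives the needed non-degeneracy at the intermediate parameter.
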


\begin{proof}
From \eqref{E2b} and \eqref{E2c} follows that $d(\exp^\tau_p)_v$ is singular if and only if $d(\exp_p)_v$ is singular. By Corollary \ref{C1a}
and continuity of both $d\exp^\tau_p$ and $d\exp_p$ there exists a function $e^\tau \colon (0,1)\to \R_{>0}$ with 
\begin{equation}\label{E2d}
\|d(\exp^\tau_{\gamma(0)})_{t\dot\gamma(0)}^{-1}\|\le e^\tau(t)
\end{equation}
for any $\gamma\in (\ev_0,\ev_1)^{-1}(K\times \{y\})$. Now consider 
$$T_{K,y}:=\{v\in \mathcal{C}|\; \exists \gamma \in (\ev\nolimits_0,\ev\nolimits_1)^{-1}(K\times \{y\})\text{ with }\dot\gamma(0)=v\}.$$
It follows that 
$$A_{t,y}=\exp^\tau(t(T_{K,y}\cap (\pi_{TM})^{-1}(A))).$$
By equation \eqref{E2d} there exists a function $f\colon (0,1)\to (0,1)$ independent of $A$ with 
\[
\mathcal{L}(A_{t,y})\ge f(t)\mathcal{L}(A)
\]
for all $t\in (0,1)$. With property \eqref{E2a} one concludes $f(t)\to 1$ for $t\to 0$.
\end{proof}

For the following proposition observe that due to the fact that geodesics $\gamma$ with $(\tau\circ\gamma)'\equiv\mathsf{const}$ are uniquely defined by their initial 
velocity one knows that the image of such a geodesic is a one-dimensional rectifiable curve. In particular, 
it has zero measure with respect to the Lebesgue measure $\mathcal{L}$ on $M$. This implies that for distinct points $x,y\in M$ the set 
\begin{align*}
B_{x,y} := \{z\in J^-(y)\cap J^-(x) \,|\, &c_L(z,y)=c_L(z,x)+c_L(x,y)\,\\ &\textbf{or }\,c_L(z,x)=c_L(z,y)+c_L(y,x)\}
\end{align*}
has vanishing $\mathcal{L}$-measure. In particular, 
$$\mathcal{L}(A_{t,x} \cap A_{t,y})=0$$
for $x\neq y$. Note that if $x$ and $y$ are not causally related then  
$A_{t,x} \cap A_{t,y} = \varnothing$.

A more general statement of this form was obtained by the second author in \cite{suhr16}. 

\begin{lemma}\label{L20}
Let $B$ be a closed achronal set. Then the set 
\[
\bigcup_{x\ne y \in B} B_{x,y}
\]
has vanishing $\mathcal{L}$-measure.
\end{lemma}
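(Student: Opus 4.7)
The plan proceeds in three steps: extract a strong geometric consequence of $z \in B_{x,y}$, parameterize the resulting null orbits, and then run a dimension/measure-theoretic argument.

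First, I observe that by achronality of $B$, one has $c_L(x,y) \in \{0,+\infty\}$ for all distinct $x,y \in B$, so $B_{x,y}$ is nonempty only when $x, y$ are null related; without loss of generality, assume $x \prec y$ with $c_L(x,y) = 0$. For $z \in B_{x,y}$ the defining equation becomes $c_L(z,y) = c_L(z,x)$. If in fact $c_L(z,x) < 0$ (so $z$ is in the chronological past of $x$), then concatenating the chronological minimizer from $z$ to $x$ with the null minimizer from $x$ to $y$ produces a causal curve with a timelike corner at $x$, which by the standard Lorentz--Finsler push-up property can be strictly improved by a chronological curve from $z$ to $y$; this forces $c_L(z,y) < c_L(z,x)$, contradicting the triangle equality. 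Hence $c_L(z,x) = c_L(z,y) = 0$, and the rigidity of the reverse triangle inequality implies that $z, x, y$ lie on a single null $\Phi^{\mathbb{L}}$-orbit.

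Second, I localize: since $\mathcal{L}$-negligibility is a local property and $B$ is $\sigma$-compact, I may assume $B$ is compact inside a relatively compact coordinate chart. The rigidity above gives
\[
\bigcup_{x \ne y \in B} B_{x,y} \;\subseteq\; S \;:=\; \bigcup_{\gamma \in \mathcal{N}_B} \gamma,
\]
where $\mathcal{N}_B$ is the set of maximal null $\Phi^{\mathbb{L}}$-orbits in the chart meeting $B$ in at least two distinct points. Fix a level set $\Sigma := \tau^{-1}(\tau_0)$ lying above the chart; every such orbit can be labeled by $(\sigma,v) \in \Sigma \times \partial\mathcal{C}^{+}|_{\Sigma}$, its intersection with $\Sigma$ together with the past-pointing unit null direction there, and so $\mathcal{N}_B$ sits inside a $(2n-3)$-dimensional parameter manifold.

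Third, I would estimate $\mathcal{L}(S) = 0$ via a Fubini/coarea-style argument. The map sending a null-related pair $(x,y) \in B \times B$ with $x \prec y$ to its orbit label factors through a $2$-dimensional equivalence, namely reparametrization of $(x,y)$ along the common orbit. Combined with the fact that a closed achronal set in a globally hyperbolic Lorentz--Finsler spacetime lies in a Lipschitz hypersurface (so $B$ has Hausdorff dimension at most $n-1$) and with the Lipschitz estimate for the null exponential map from Corollary \ref{C1a}, this bounds $\dim_H \mathcal{N}_B$ and consequently $\dim_H S \le n - 1$, yielding $\mathcal{L}(S) = 0$.

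The main obstacle is the sharpness of this dimension count: the naive parameter count becomes saturated in higher $n$, so one has to exploit the finer Rademacher-differentiable structure of the Lipschitz hypersurface containing $B$. Specifically, at $\mathcal{H}^{n-1}$-a.e. regular point $x \in B$ the Clarke tangent cone is a causal hyperplane whose intersection with the null cone is at most one-dimensional; this sharply restricts the null directions at $x$ along which the orbit can revisit $B$, and is what makes the Hausdorff-dimension bound go through in every ambient dimension.
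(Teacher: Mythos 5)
Your Step 1 (achronality forces $c_L(x,y)\in\{0,\infty\}$, and the triangle-equality rigidity places $z,x,y$ on a single null $\Phi^{\mathbb{L}}$-orbit) is essentially sound, and your Step 2 containment is exactly the easy part of the argument: the paper does the same reduction in two lines, noting that $\bigcup_{x\neq y\in B}B_{x,y}$ is contained in the union of images of minimizers meeting $B$ at least twice (it does not even need the null refinement). The problem is Step 3, which is where the entire content of the lemma lives, and there you do not have a proof. In the paper this step is not argued from scratch at all: it is outsourced to \cite[Proposition 3.22]{suhr16}, which says precisely that for a closed achronal set the union of minimizers intersecting it twice meets every time slice $\{\tau=r\}$ in a set of vanishing $(n-1)$-dimensional Lebesgue measure; the lemma then follows by integrating over the slices of the splitting. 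Your proposal replaces this citation with a dimension count that you yourself concede fails for $n\ge 3$, followed by a gesture at Rademacher points of a Lipschitz hypersurface containing $B$.

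Concretely, two things break. First, the claimed ``$2$-dimensional equivalence'' (sliding $(x,y)$ along the common orbit) is not available: the null orbit through $x$ and $y$ need not stay in $B$ near either endpoint, so the fibres of the map $(x,y)\mapsto$ orbit are typically discrete, and you get no improvement on the trivial bound $\dim\mathcal{N}_B\le 2n-3$. Second, even granting the tangency restriction you invoke at $\mathcal{H}^{n-1}$-a.e.\ regular point (which itself requires an argument: one must show, using achronality and nearby points of $B$ or of an achronal Lipschitz extension of $B$, that a null geodesic leaving $x$ non-tangentially immediately enters $I^+$ of neighbouring points and hence can never return to $B$; this needs care at edge and non-differentiability points), the conclusion does not follow from a Hausdorff-dimension count: the remaining returning geodesics form, at best, an $(n-1)$-parameter family of curves, and such a family can perfectly well sweep a set of positive $\mathcal{L}$-measure in $M$. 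So the final sentence of your proposal asserts exactly the statement that still has to be proved. To repair the argument you would either have to cite \cite[Proposition 3.22]{suhr16} (as the paper does) or actually prove a slice-wise measure-zero statement for the union of null geodesics meeting $B$ twice, e.g.\ by the tangency/push-up mechanism carried out quantitatively on an achronal Lipschitz hypersurface containing $B$; the reference to Corollary \ref{C1a} does not supply this, since that corollary only bounds $\|d(\exp_{\eta(0)})_{t\dot\eta(0)}^{-1}\|$ for minimizers with endpoints in a compact set and gives no control on the set swept by the returning null geodesics.
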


\begin{proof}
For $z\in \cup_{x\neq y\in B} B_{x,y}$ there exist $x,y\in B$ with $z\in B_{x,y}$. Especially one has $c_L(z,x),c_L(z,y)<\infty$. By the definition of $B_{x,y}$ one concludes
that $c_L(x,y)<\infty$ or $c_L(y,x)<\infty$. 

Now let $\Gamma_B\subset \Gamma$ be the subspace of minimizers which intersect $B$ at least twice. By \cite[Proposition 3.22]{suhr16} the set 
$$\bigcup_{\gamma\in \Gamma_B} \gamma(\R)\cap\{\tau=r\}$$
has vanishing Lebesgue measure in $\{\tau=r\}$ for all $r\in\R$. Thus 
$$\bigcup_{\gamma\in \Gamma_B} \gamma(\R)$$
has vanishing Lebesgue measure in $M$. Since 
$$\bigcup_{x\ne y \in B} B_{x,y}\subset \bigcup_{\gamma\in \Gamma_B} \gamma(\R)$$
the claim follows.
\end{proof}

For a map $\sigma:\Gamma\to[0,1]$ and a geodesic $\gamma \in \Gamma$ one writes $\gamma_\sigma = \gamma(\sigma(\gamma))$. 
Let $S \subset \Gamma$ be a subset of the space of minimizing geodesics. Then for $s,t\in[0,1]$ one
defines 
$$S_{t,s} := (\ev\nolimits_s,\ev\nolimits_t)(S)\subset M\times M,$$
 $S_t := \ev_t(S)$ and $S_\sigma:=\{\gamma_\sigma\}_{\gamma\in S}\subset M$.

\begin{lemma}\label{lem:strong non-branching}
Let $S\subset \Gamma$ be such that $S_{0,1}$ is $c_L$-cyclically monotone. 
If $S_1$ is achronal then for all Borel-measurable maps $\sigma:S \to (0,1)$  one has 
$$\mathcal{L}(S^{(1)}_\sigma \cap S^{(2)}_\sigma)=0$$ 
for all Borel-measurable $S^{(1)},S^{(2)}\subset S$ with $S^{(1)}_1 \cap S^{(2)}_1 = \varnothing$.
\end{lemma}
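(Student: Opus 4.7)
The plan is to argue by contradiction. Suppose there are Borel $S^{(1)},S^{(2)}\subset S$ with $S^{(1)}_1\cap S^{(2)}_1=\varnothing$ and a Borel $\sigma\colon S\to(0,1)$ with $\mathcal{L}(S^{(1)}_\sigma\cap S^{(2)}_\sigma)>0$. For each $z$ in this intersection I pick $\gamma_i\in S^{(i)}$ with $\gamma_i(\sigma(\gamma_i))=z$. I would apply the $c_L$-cyclical monotonicity of $S_{0,1}$ to the two endpoint pairs, then split the right hand side through $z$ via the reverse triangle inequality, to obtain
\[
\begin{aligned}
c_L(\gamma_1(0),\gamma_1(1))+c_L(\gamma_2(0),\gamma_2(1))
&\le c_L(\gamma_1(0),\gamma_2(1))+c_L(\gamma_2(0),\gamma_1(1))\\
&\le \sum_{i=1}^2\bigl(c_L(\gamma_i(0),z)+c_L(z,\gamma_i(1))\bigr)\\
&= \sum_{i=1}^2 c_L(\gamma_i(0),\gamma_i(1)),
\end{aligned}
\]
the last equality because each $\gamma_i$ is a minimizer passing through $z$. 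Thus equality holds throughout, and the concatenation $\gamma_1|_{[0,\sigma(\gamma_1)]}\cdot\gamma_2|_{[\sigma(\gamma_2),1]}$ is an $\mathcal{A}$-minimizer from $\gamma_1(0)$ to $\gamma_2(1)$.

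Next I would invoke Proposition \ref{P1}: this concatenation is a $\Phi^\mathbb{L}$-orbit up to monotone reparametrization. A monotone reparametrization only rescales tangent vectors but cannot change their direction, so the image of the concatenation carries no corner at $z$, and the vectors $\dot\gamma_1(\sigma(\gamma_1))$, $\dot\gamma_2(\sigma(\gamma_2))$ must be positively proportional. An Euler-Lagrange orbit is determined by a point together with a tangent direction, so $\gamma_1$ and $\gamma_2$ lie on a common orbit $\eta$ through $z$. Because $\mathcal{C}$ and $\partial\mathcal{C}$ are both $\Phi^\mathbb{L}$-invariant, $\eta$ is either everywhere timelike or everywhere lightlike. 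If $\eta$ were timelike, then $\gamma_1(1)$ and $\gamma_2(1)$ would be distinct $\eta$-ordered points of $S_1$ with $c_L<0$ between them, contradicting the achronality $c_L|_{S_1\times S_1}\ge 0$; hence $\eta$ is lightlike.

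In the lightlike case the time-affine reparametrization $\tilde\gamma\in\Gamma$ of the $\eta$-segment from $\gamma_1(0)$ to $\gamma_2(1)$ is a minimizer whose image contains both distinct $S_1$-points $\gamma_1(1),\gamma_2(1)$ and also passes through $z$. Consequently
\[S^{(1)}_\sigma\cap S^{(2)}_\sigma\subset \bigcup_{\tilde\gamma\in\Gamma_{S_1}}\tilde\gamma(\R),\]
with $\Gamma_{S_1}\subset\Gamma$ denoting the elements whose image meets $S_1$ at least twice, as in the proof of Lemma \ref{L20}. The argument there, namely \cite[Proposition 3.22]{suhr16} applied slicewise in $\tau$, shows the right hand side has vanishing $\mathcal{L}$-measure, contradicting our assumption.

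The step I expect to require most care is the no-corner argument: Proposition \ref{P1} only guarantees smoothness up to monotone reparametrization, so I would have to spell out that a change in tangent \emph{direction} at $z$ cannot be absorbed by such a reparametrization (which rescales only magnitudes), thereby forcing the two one-sided tangent lines at $z$ to coincide. A smaller bookkeeping point is that Lemma \ref{L20} is stated for closed achronal sets while $S_1$ is only Borel, but only $\Gamma_{S_1}$ enters the measure-zero estimate, and that set is well-defined without closedness.
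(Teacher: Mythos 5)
Your proof is correct and follows essentially the same route as the paper: the equality chain from cyclic monotonicity plus the triangle inequality, the no-corner/local-uniqueness argument placing $\gamma_1,\gamma_2$ on a common orbit, and the conclusion that every point of $S^{(1)}_\sigma\cap S^{(2)}_\sigma$ lies on a minimizer meeting the achronal set $S_1$ twice, disposed of by the same measure-zero mechanism (\cite[Proposition 3.22]{suhr16}) that underlies Lemma \ref{L20} -- your explicit timelike/lightlike split merely makes visible the use of achronality that the paper leaves inside Lemma \ref{L20}. One small repair: in the lightlike case the segment from $\gamma_1(0)$ to $\gamma_2(1)$ contains $\gamma_1(1)$ only when $\gamma_1(1)$ precedes $\gamma_2(1)$ along the orbit; in the opposite order use the other concatenation, from $\gamma_2(0)$ to $\gamma_1(1)$, exactly as the paper's ``either $y_1$ is on the geodesic connecting $z$ and $y_2$ or $y_2$ is on the geodesic connecting $z$ and $y_1$'' alternative does.
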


\begin{remark}
If $S_1$ is additionally acausal, e.g. $S_1$ is contained in a time-slice $\{\tau=\tau_0\}$, then one even has $S^{1}_\sigma \cap S^{2}_\sigma = \varnothing$. 
\end{remark}
\begin{proof}
Let $z \in S^{1}_\sigma \cap S^{2}_\sigma$. Then there exist $\gamma_1 \in S^{(1)}$ and $\gamma_2 \in S^{(2)}$ such that 
$$c_L(x_i,y_i)=c_L(x_i,z)+c_L(z,y_i)$$
for $i=1,2$, $x_i:=\gamma_i(0)$ and $y_i:=\gamma_i(1)$.

Since $S_{0,1}$ is $c_L$-cyclically monotone one has
\begin{align*}
c_{L}(x_{1},y_{1})+c_{L}(x_{2},y_{2}) & \le c_{L}(x_{2},y_{1})+c_{L}(x_{1},y_{2})\\
 & \le(c_{L}(x_{2},z)+c_{L}(z,y_{1}))+(c_{L}(x_{1},z)+c_{L}(z,y_{2}))\\
 & =c_{L}(x_{1},y_{1})+c_{L}(x_{2},y_{2}).
\end{align*}
This shows that there is a minimizing geodesic connecting $x_1$ with $y_2$ and passing through $z$. As geodesics are locally unique, 
either $y_1$ is on the geodesic connecting $z$ and $y_2$ or $y_2$ is on the geodesic
connecting $z$ and $y_1$. Thus $z \in B_{y_1,y_2}$ with $y_1 \ne y_2$.

In particular, 
$$S^{1}_\sigma \cap S^{2}_\sigma \subset \bigcup_{y\ne y' \in S_1} B_{y,y'}.$$
Since $S_1$ is achronal it follows that 
$$\mathcal{L}(S^{1}_\sigma \cap S^{2}_\sigma)=0$$
by Lemma \ref{L20}.
\end{proof}

Now one combines Lemma \ref{lem:strong non-branching} with the weak measure contraction property to obtain the following.
\begin{prop}\label{P2}
Assume that
$$A\times\{y,z\}\subset\operatorname{supp}\mu\times\operatorname{supp}\nu$$
is $c_{L}$-cyclically monotone for an achronal two-point set $\{y, z\}$ and some measurable
set $A$. Then $A$ has vanishing Lebesgue measure.
\end{prop}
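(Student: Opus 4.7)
\medskip
\noindent\textbf{Proof plan.} The strategy is to argue by contradiction using the Lebesgue density theorem, combining the non-branching result of Lemma \ref{lem:strong non-branching} with the weak measure contraction property of Lemma \ref{L1}. By inner regularity of the Lebesgue measure one may assume without loss of generality that $A$ is compact; since $A\times\{y,z\}$ is $c_L$-cyclically monotone with finite cost, every $x\in A$ lies in $J^-(y)\cap J^-(z)$, so with $K:=A\cup\{y,z\}$ compact the hypotheses of Lemma \ref{L1} are in place. First I would let $S\subset\Gamma$ denote the set of all minimal time-affinely parametrized geodesics from a point of $A$ to a point of $\{y,z\}$, split as $S=S^{(y)}\cup S^{(z)}$ according to the endpoint. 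Then $S_{0,1}\subset A\times\{y,z\}$ is $c_L$-cyclically monotone by hypothesis, $S_1\subset\{y,z\}$ is achronal, and $S^{(y)}_1\cap S^{(z)}_1=\varnothing$.

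Applying Lemma \ref{lem:strong non-branching} with the constant map $\sigma\equiv t\in(0,1)$ gives
\[\mathcal{L}(A_{t,y}\cap A_{t,z})=\mathcal{L}(S^{(y)}_t\cap S^{(z)}_t)=0,\]
while Lemma \ref{L1} delivers $\mathcal{L}(A_{t,y}),\mathcal{L}(A_{t,z})\ge f(t)\mathcal{L}(A)$ for some $f$ with $f(t)\to 1$ as $t\to 0$. Consequently
\[\mathcal{L}(A_{t,y}\cup A_{t,z})\ge 2f(t)\mathcal{L}(A).\]
Suppose for contradiction $\mathcal{L}(A)>0$ and fix a Lebesgue density point $x_0\in A$ together with some $\epsilon\in(0,1/2)$. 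For all sufficiently small $r>0$ one has $\mathcal{L}(A\cap B_r(x_0))>(1-\epsilon)\mathcal{L}(B_r(x_0))$. Applying the preceding estimate to $A_r:=A\cap B_r(x_0)$ (whose pairing with $\{y,z\}$ remains cyclically monotone as a subset) yields
\[\mathcal{L}((A_r)_{t,y}\cup(A_r)_{t,z})\ge 2f(t)(1-\epsilon)\mathcal{L}(B_r(x_0)).\]
On the other hand, by the $C^0$-continuity of the time-affine exponential map together with the uniform bound on $|\dot\gamma(0)|$ over $\gamma\in S$ (the endpoints lying in the fixed compact $K$), there exists $C>0$ such that $\dist(\gamma(0),\gamma(t))\le Ct$ for all such $\gamma$ and $t\in[0,1]$. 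Hence $(A_r)_{t,y}\cup(A_r)_{t,z}\subset B_{r+Ct}(x_0)$ and one obtains
\[\mathcal{L}(B_{r+Ct}(x_0))\ge 2f(t)(1-\epsilon)\mathcal{L}(B_r(x_0)).\]
Letting $t\to 0$ with $r$ fixed forces $1\ge 2(1-\epsilon)$, contradicting $\epsilon<1/2$. Therefore $\mathcal{L}(A)=0$.

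The main obstacle is to extract an upper bound on $\mathcal{L}(A_{t,y}\cup A_{t,z})$ sharp enough to beat the factor-two lower bound furnished by non-branching plus the measure contraction property; this is exactly what the Lebesgue density theorem supplies, exploiting the near-identity behaviour of $\exp^\tau$ at small $t$ recorded in \eqref{E2a}. A minor bookkeeping point is to justify that Lemma \ref{lem:strong non-branching} applies in the present setting: once $A$ is compact this reduces to standard measurable selection, since $S$ is described as the inverse image of the measurable set $A\times\{y,z\}$ under the continuous map $(\ev_0,\ev_1)\colon\Gamma\to M\times M$.
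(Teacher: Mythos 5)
Your proposal is correct and follows essentially the same route as the paper: Lemma \ref{lem:strong non-branching} yields $\mathcal{L}(A_{t,y}\cap A_{t,z})=0$, Lemma \ref{L1} yields the factor-two lower bound $\mathcal{L}(A_{t,y}\cup A_{t,z})\ge 2f(t)\mathcal{L}(A)$, and a small-$t$ containment then forces the contradiction. The only (cosmetic) difference is the final localization: the paper contains $A_{t,y}\cup A_{t,z}$ in the $\epsilon$-neighborhood $A_\epsilon$ of the compact set $A$ and concludes $\mathcal{L}(A)\ge 2\mathcal{L}(A)$ directly, whereas you localize at a Lebesgue density point and compare with balls $B_{r+Ct}(x_0)$, which works just as well but with slightly more machinery.
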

\begin{proof}

By inner regularity of $\mathcal{L}$ one may assume $A$ is compact so that
for a fixed $\epsilon>0$ and $t$ sufficiently close to $0$ it holds
\[
A_{t,y}\cup A_{t,z}\subset A_{\epsilon}
\]
where $A_{\epsilon}$ is the $\epsilon$-neighborhood of $A$ with respect to the distance $dist$. Lemma \ref{lem:strong non-branching} implies that 
\[
\mathcal{L}(A_{t,y}\cap A_{t,z})=0\quad\text{for all }t\in[0,1).
\]
Then the weak measure contraction property yields
\begin{align*}
\mathcal{L}(A) & =\lim_{\epsilon\to0}\mathcal{L}(A_{\epsilon})\\
 & \ge\limsup_{t\to0}\mathcal{L}(A_{t,y}\cup A_{t,z})\\
 & =\limsup_{t\to0}\mathcal{L}(A_{t,y})+\mathcal{L}(A_{t,z})\\
 & \ge2\limsup_{t\to0}f(t)\mathcal{L}(A)=2\mathcal{L}(A)
\end{align*}
which can hold only if $A$ has zero measure.
\end{proof}

Lemma \ref{lem:strong non-branching} can be used to prove an interpolation inequality in form of the weak measure contraction 
property between any absolutely continuous measure and a causally related achronal discrete measure. In order
to prove such an interpolation inequality for general achronal target measures one needs to approximate the target measures via
finite measures which satisfy the achronality assumption. As such an approximation seems difficult, one proceeds in two steps: 
As measures supported in a time slice can be easily approximated one first proves the interpolation 
inequality for those measures. In a second step one uses this fact together with the strong non-branching property 
implied by Lemma \ref{lem:strong non-branching} to approximate general achronal target measures.

Given a subset $C\subset M\times M$ and $s,t\in (0,1)$ define
\[
C_{s,t}=\{(\ev\nolimits_s\gamma,\ev\nolimits_{t}\gamma)\,|\,\gamma\in(ev_{0},ev_{1})^{-1}(C)\}
\]
and $C_t=p_1(C_{t,t})$.

\begin{lemma}\label{lem:finite-approximation}
Assume $\pi$ is an optimal coupling with compact support between an absolutely continuous measure $\mu$ and a measure $\nu$ with 
support in a time-slice $\{\tau = \tau_0\}$. Then there is a sequence $\nu_n = \sum_{i=1}^{N_n}\lambda^n_i \delta_{x_i^n}$
such that $\operatorname{supp}\nu_n \subset \{\tau=\tau_0\}$ and the optimal couplings $\pi_n$ of $(\mu,\nu_n)$
converge weakly to an optimal coupling $\pi'$ of $(\mu,\nu)$. 
\end{lemma}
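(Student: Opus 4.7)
The plan is to approximate $\nu$ weakly by finite atomic measures $\nu_n$ supported in the time-slice $\{\tau = \tau_0\}$, and to obtain an optimal coupling for $(\mu,\nu)$ as a weak subsequential limit of optimal couplings $\pi_n$ for $(\mu,\nu_n)$. The main ingredients will be a Borel partition of $\supp\nu$, existence of optimal couplings via Proposition \ref{dynoptcou}, and a compactness/lower semi-continuity argument in the weak topology.

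Using compactness of $\supp\nu$, for each $n \in \N$ I would select a finite Borel partition $\{A_i^n\}_{i=1}^{N_n}$ of $\supp\nu$ with $\nu(A_i^n) > 0$ and $\max_i \diam(A_i^n) \le 1/n$. Picking representatives $x_i^n \in A_i^n$, weights $\lambda_i^n := \nu(A_i^n)$ and setting $\nu_n := \sum_i \lambda_i^n \delta_{x_i^n}$, one has $\supp\nu_n \subset \{\tau = \tau_0\}$, and the diameter condition forces $\nu_n \to \nu$ weakly.

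The main obstacle is to verify $(\mu,\nu_n) \in \mathcal{P}_\tau^+(M)$, so that Proposition \ref{dynoptcou} yields optimal couplings $\pi_n$. Disintegrating $\pi = \int \mu^y \otimes \delta_y\, d\nu(y)$ along the second marginal, the natural candidate coupling is
\[
\tilde{\pi}_n := \sum_i \Big(\int_{A_i^n} \mu^y\, d\nu(y)\Big)\otimes \delta_{x_i^n},
\]
which has marginals $\mu$ and $\nu_n$. Causality of $\tilde{\pi}_n$ amounts to $x_i^n \in J^+(x)$ for every $(x,y) \in \supp\pi$ with $y \in A_i^n$; this can fail when the $\pi$-transport $x \to y$ is lightlike, since $x_i^n$ and $y$ lie in the acausal slice $\{\tau = \tau_0\}$ and a perturbation within the slice can move $x$ out of $J^-(x_i^n)$. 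I would address this by a measurable selection of $x_i^n \in A_i^n$: for sufficiently fine partitions, the set of admissible representatives (those for which $\bigcup_{y \in A_i^n}\supp\mu^y \subset J^-(x_i^n)$) is non-empty away from a $\nu$-negligible set of indices, by openness of the strictly timelike relation combined with causal relatedness of $(\mu,\nu)$; the exceptional indices can be absorbed into neighboring pieces without affecting the weak limit.

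With causal $\tilde{\pi}_n$ and hence optimal $\pi_n$ in hand, the sequence $\{\pi_n\}_{n \in \N}$ is tight because its supports lie in a fixed compact subset of $M\times M$, so Prokhorov's theorem extracts a subsequence $\pi_{n_k}$ converging weakly to a probability measure $\pi'$, whose marginals are $\mu$ and $\nu$. Lower semi-continuity of $c_L$ (with $c_L = +\infty$ off the closed set $J^+$) yields $\int c_L\,d\pi' \le \liminf_k \int c_L\,d\pi_{n_k}$, and combining optimality of $\pi_{n_k}$ with the convergence $\int c_L\,d\tilde{\pi}_{n_k} \to \int c_L\,d\pi$ (from continuity of $c_L$ on the compact set $\supp\pi \subset J^+$ and dominated convergence as $\diam(A_i^n) \to 0$) gives $\int c_L\,d\pi' \le \int c_L\,d\pi$. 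Since $\pi$ is optimal for $(\mu,\nu)$, so is $\pi'$.
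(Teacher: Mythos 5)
Your overall compactness/lower-semicontinuity ending (tightness from compact supports, Prokhorov, l.s.c.\ of $c_L$, cost comparison with the candidate couplings) matches the way the paper concludes, and it is fine as far as it goes. The genuine gap is in the step you yourself flag as the main obstacle: the claim that, for fine enough partitions, one can select representatives $x_i^n\in A_i^n\subset\supp\nu$ with $\bigcup_{y\in A_i^n}\supp\mu^y\subset J^-(x_i^n)$ outside a $\nu$-negligible set of indices. Nothing in the hypotheses gives you any strict timelikeness to make an openness argument: the optimal coupling may be lightlike on a set of positive $\pi$-measure, or even purely lightlike, and then the admissible set of representatives can be empty for essentially \emph{all} cells, not just exceptional ones that could be absorbed. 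Concretely, if $y\in\{\tau=\tau_0\}$ receives lightlike transport from a source $x\in\partial J^-(y)$, the slice points $z$ with $x\in J^-(z)$ form a region touching $y$ only from the side the light ray comes from; if a single cell $A_i^n$ contains targets hit by light rays from different directions (which cannot be ruled out, in particular in dimension $\ge 2+1$), these constraints are incompatible for every choice of $x_i^n\in A_i^n$, no matter how fine the partition. This is exactly the obstruction the paper points out in the remark following Lemma \ref{lem:finite-approximation}: an approximation by finite measures whose atoms lie in $\supp\nu$ (or more generally in a fixed achronal set) is in general \emph{not} available for lightlike optimal couplings; it is only available when $\pi$ is supported in the interior of $J^+$, which is the situation your "openness" argument implicitly assumes.

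The paper circumvents this by not insisting that the atoms lie in $\supp\nu$. It first pushes the coupling forward by $(\id,\phi_\varepsilon)$, where $\phi_t$ is the flow of a timelike vector field $X$ with $d\tau(X)=1$, so that the resulting coupling is supported in $\{c_L<0\}$; it then lifts this to a dynamical coupling and reparametrizes each minimizer to stop where it crosses the slice $\{\tau=\tau_0\}$. This produces a surrogate target $\nu_\varepsilon$ supported in the slice, within Prokhorov distance $O(\varepsilon)$ of $\nu$, for which the coupling with $\mu$ is strictly timelike; now \emph{any} finite approximation $\nu_{n,\varepsilon}$ of $\nu_\varepsilon$ in the slice has eventually finite cost to $\mu$, and a diagonal sequence $\pi_{n_k,1/k}$ together with your compactness/l.s.c.\ argument yields a weak limit that is an optimal coupling of $(\mu,\nu)$ (note the lemma only requires convergence to \emph{some} optimal coupling $\pi'$, not to $\pi$). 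So the architecture of your proof is right, but the selection step as stated would fail precisely in the lightlike regime that the lemma must cover, and some device like the flow push-up and re-intersection with the slice is needed to repair it.
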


\begin{proof}
Let $X\in\Gamma(TM)$ be a vector field with $L(X)<0$ and $d\tau(X)=1$ everywhere and denote with $\phi_t$ the flow of $X$.
The coupling 
$$\pi_\e':=(\id,\phi_\e)_\sharp \pi$$ 
is supported in $\{c_L<0\}$. Recall that by \cite[Proposition 3.10]{suhr16} there exists a Borel map $S\colon J^+\to C^0([0,1],M)$ with $S(x,y)\in \Gamma_{x\to y}$. 
The push forward $\Pi_\e':=S_\sharp \pi_\e'$ is then a dynamical coupling. Consider the subset $\Gamma_{t_0}\subset \Gamma$ of minimizers $\gamma$ with 
$\tau\circ \gamma(0)\le t_0$ and $\tau\circ \gamma(1)\ge t_0+\e$.
The maps 
$$T_0\colon \Gamma_{t_0}\to [0,1],\; \gamma\mapsto T_0(\gamma)$$
such that $\tau(\gamma(T_0(\gamma)))=t_0$ and 
$$R\colon \Gamma_{t_0}\to \Gamma_{t_0},\; \gamma\mapsto [t\mapsto \gamma(T_0(\gamma)t)]$$
are continuous. Define 
$$\Pi_\e:=R_\sharp (\Pi_\e')\text{ and }\pi_\e:=(\ev\nolimits_0,\ev\nolimits_1)_\sharp \Pi_\e.$$
It then follows that $\supp \pi_\e \subset \{c_L<0\}$ and $\supp (p_2)_\sharp \pi_\e\subset \{\tau=\tau_0\}$.
Furthermore, the Prokhorov distance between $\nu_\epsilon:=(p_2)_\sharp \pi_\epsilon$ and $\nu$ tends to zero for $\e\to 0$. 

Observe that for any approximation by finite measures $(\nu_{n,\epsilon})$ of $\nu_\epsilon$ the $C_L$-cost between $\mu$ and $\nu_{n,\epsilon}$ 
is eventually finite and the distance between $\nu$ and $\nu_n$ is eventually less than $2\epsilon$. One may also assume
that $(\nu_{n,\epsilon})$ has support in $\{\tau=\tau_0\}$.

Denote by $\pi_{n,\epsilon}$ the $c_L$-optimal coupling of $(\mu,\nu_{n,\epsilon})$. 
Then 
$$\liminf_{n\to\infty}\int c_{L}d\pi_{n,\epsilon}\le\int c_{L}d\pi_{\epsilon}.$$ 
To conclude just observe that for a diagonal sequence $\pi_{(k)}=\pi_{n_k,\frac{1}{k}}$ 
one has $\pi_{(k)}\rightharpoonup \tilde{\pi}$ satisfying 
$$\int c_L d\tilde\pi \le \liminf_{k\to\infty} \int c_L d\pi_{(k)} \le \int c_L d\pi.$$ 
Since $\pi$ is optimal the $\tilde\pi$ must be optimal as well.
\end{proof}

\begin{remark}
If an optimal coupling $\pi$ is supported in the interior of $J^+$ then it is possible to obtain an approximation $\pi_n$ 
with finite target measures which have support in $\operatorname{supp}\left((p_2)_\sharp \pi\right)$. Thus it follows that it is possible
to keep the target approximation $\nu_n$ in a fixed achronal set $B$. Note, however, such an approximation
for purely lightlike optimal couplings is not always possible. It even seems difficult to prove Lemma \ref{lem:finite-approximation} under the 
assumption that $\supp\nu$ is achronal.
\end{remark}

\begin{prop}\label{P3}
Let $(\mu,\nu)\in \mathcal{P}^+_\tau(M)$ with $\mu$ being absolutely continuous and 
$\operatorname{supp}\nu \subset \{\tau=\tau_0\}$.
Then there is an optimal coupling $\pi$ of $(\mu,\nu)$ such that for all $c_L$-cyclically monotone sets $C\subset \operatorname{supp} \pi$ 
with $\pi(C)=1$ it holds 
\[
\mathcal{L}(C_{t})\ge f(t)\mathcal{L}(C_{0}).
\]
\end{prop}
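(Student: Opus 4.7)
The plan is to take $\pi$ to be the weak limit of optimal couplings $\pi_n$ between $\mu$ and finitely supported $\nu_n=\sum_i \lambda_i^n\delta_{x_i^n}$ with $\{x_i^n\}\subset\{\tau=\tau_0\}$ provided by Lemma \ref{lem:finite-approximation}, and to establish the inequality by combining the weak measure contraction property (Lemma \ref{L1}) with the strong non-branching property (Lemma \ref{lem:strong non-branching}).

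I would first verify the inequality for each $\pi_n$. Disintegrating $\pi_n=\sum_i \lambda_i^n(\mu_i^n\otimes\delta_{x_i^n})$, any $c_L$-cyclically monotone $C'\subset\supp\pi_n$ with $\pi_n(C')=1$ decomposes up to $\pi_n$-null sets as $C'=\bigcup_i(\tilde A_i^n\times\{x_i^n\})$ with $\tilde A_i^n\subset J^-(x_i^n)\cap\supp\mu$ and $\mu(\tilde A_i^n)=\lambda_i^n$. Lemma \ref{L1} applied on a compact containing $\supp\mu\cup\supp\nu$ gives
\[
\mathcal{L}\bigl((\tilde A_i^n)_{t,x_i^n}\bigr)\ge f(t)\,\mathcal{L}(\tilde A_i^n).
\]
Because $\{x_i^n\}_i\subset\{\tau=\tau_0\}$ is acausal, applying Lemma \ref{lem:strong non-branching} to $S:=(\ev_0,\ev_1)^{-1}(C')$ with $\sigma\equiv t$ yields pairwise $\mathcal{L}$-disjointness $\mathcal{L}\bigl((\tilde A_i^n)_{t,x_i^n}\cap(\tilde A_j^n)_{t,x_j^n}\bigr)=0$ for $i\ne j$. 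Using $C'_t=\bigcup_i(\tilde A_i^n)_{t,x_i^n}$ and $\mathcal{L}(C'_0)\le\sum_i\mathcal{L}(\tilde A_i^n)$, summing then gives $\mathcal{L}(C'_t)\ge f(t)\,\mathcal{L}(C'_0)$.

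The hard part is propagating this inequality to the limit $\pi$ for an arbitrary $c_L$-cyclically monotone $C\subset\supp\pi$ with $\pi(C)=1$, where $\nu$ may be continuous. I would argue directly on $\pi$ using a refining sequence of countable Borel partitions $\{B_k^{(\ell)}\}_k$ of $p_2(C)\subset\{\tau=\tau_0\}$ with mesh tending to zero. Setting $C^{(k,\ell)}:=C\cap(M\times B_k^{(\ell)})$ and $\tilde A_k^{(\ell)}:=p_1(C^{(k,\ell)})$, and choosing $y_k^{(\ell)}$ slightly in the future of $B_k^{(\ell)}$ via the vector field $X$ of Lemma \ref{lem:finite-approximation} so that $\tilde A_k^{(\ell)}\subset J^-(y_k^{(\ell)})$, Lemma \ref{lem:strong non-branching} gives pairwise $\mathcal{L}$-disjointness of the $C^{(k,\ell)}_t$ across $k$ and Lemma \ref{L1} gives $\mathcal{L}\bigl((\tilde A_k^{(\ell)})_{t,y_k^{(\ell)}}\bigr)\ge f(t)\,\mathcal{L}(\tilde A_k^{(\ell)})$. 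The analytic heart is to compare the true slice $C^{(k,\ell)}_t$ with the idealized slice $(\tilde A_k^{(\ell)})_{t,y_k^{(\ell)}}$: the $C^1$-continuity of $\exp^\mathbb{L}$ on compacts (Corollary \ref{C1a}) together with the diameter of $B_k^{(\ell)}$ bounds the symmetric difference by an error vanishing uniformly in $k$ as $\ell\to\infty$. Summing over $k$ and letting $\ell\to\infty$ yields $\mathcal{L}(C_t)\ge f(t)\,\mathcal{L}(C_0)$. The principal difficulty lies in this equicontinuous target-approximation, since Lemma \ref{L1} as stated requires a single fixed endpoint rather than a family of nearby ones.
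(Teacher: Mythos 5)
Your finite-target step is exactly the paper's: decompose the cyclically monotone set over the atoms $x_i^n$, apply the weak measure contraction property (Lemma \ref{L1}) to each $A^i$, and use acausality of the time slice together with Lemma \ref{lem:strong non-branching} to make the interpolated pieces $\mathcal{L}$-essentially disjoint, then sum. The gap is in the passage to a general target $\nu$: the step you yourself flag as the ``analytic heart'' is precisely the missing proof. Partitioning $p_2(C)$ into small cells $B_k^{(\ell)}$ and replacing each cell by a single point $y_k^{(\ell)}$ slightly to its future does not let you invoke Lemma \ref{L1}, because the true slice $C^{(k,\ell)}_t$ is the image of $\tilde A_k^{(\ell)}$ under the map $x\mapsto\gamma_{x\to y(x)}(t)$ with a merely measurable endpoint assignment $y(x)\in B_k^{(\ell)}$, while $(\tilde A_k^{(\ell)})_{t,y_k^{(\ell)}}$ uses a fixed endpoint along entirely different geodesics. $C^1$-continuity of $\exp^{\mathbb{L}}$ (Corollary \ref{C1a}) only gives Hausdorff-type closeness of these two sets, and closeness in the Hausdorff sense does not control the Lebesgue measure of a symmetric difference: a lower bound on $\mathcal{L}(C^{(k,\ell)}_t)$ for a small target cell is essentially as hard as the original statement restricted to that cell, unless you prove a genuine ``moving endpoint'' strengthening of Lemma \ref{L1} (injectivity plus a uniform bound on the inverse of the combined map for measurable endpoint selections), which you do not do. As written, the limit argument is a plan with its central estimate unproven and, in the form stated, unlikely to close.

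The paper's proof sidesteps this difficulty entirely. It takes $\pi$ as the weak limit of the couplings $\pi_n$ with finite targets from Lemma \ref{lem:finite-approximation}, arranges (after a slight restriction of the first marginals) that $\supp\pi_n\to\supp\pi$ in the Hausdorff metric, and then only needs the soft containment $C^{(n)}_t\subset (C_t)_\epsilon$ for $n$ large, which does follow from continuity of the interpolation. The inequality is then transferred by outer regularity: $\mathcal{L}(C_t)=\lim_{\epsilon\to0}\mathcal{L}((C_t)_\epsilon)\ge\limsup_n\mathcal{L}(C^{(n)}_t)\ge f(t)\limsup_n\mathcal{L}(C^{(n)}_0)=f(t)\mathcal{L}(C_0)$, i.e.\ the finite-target inequality is proved where Lemma \ref{L1} applies with fixed endpoints, and only an upper containment of slices is passed to the limit --- no control of symmetric differences is ever needed. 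Finally, a general $c_L$-cyclically monotone $C\subset\supp\pi$ of full $\pi$-measure (possibly non-closed) is handled by inner approximation with compact subsets $C^k$, restricting and renormalizing $\pi$, and taking $\limsup_k$. If you want to salvage your direct partition argument you would have to prove the moving-endpoint contraction estimate; the simpler fix is to replace that step by the support-convergence and $\epsilon$-neighborhood argument above.
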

\begin{proof}

First note if $\nu$ is a finite measure then the support $C=\operatorname{supp}\pi$ of any $c_L$-cyclically monotone
coupling $\pi$ satisfies the assumption. Indeed, the set of points $x \in C_0$ such that $(x,y),(x,y')\in C$ for 
distinct $y,y'\in C_1$ has zero $\mathcal{L}$-measure, i.e. $\mathcal{L}(A^i_{t,y_i} \cap A^j_{t,y_j})=0$
for $i\ne j$ and $t\in [0,1)$ where $\{y_i\}_{i=1}^n=C_1$ and $A^i = p_1((p_2)^{-1}(y_i))$. Observe now
\begin{align*}
\mathcal{L}(C_{t}) =\sum_{i=1}^{n}\mathcal{L}(A_{t,y_{i}}^{i})\ge\sum_{i=1}^{n}f(t)\mathcal{L}(A^{i})=\mathcal{L}(C_{0})
\end{align*}

For more general $\nu$ let $\pi$ be the weak limit of a sequence $\pi_n$ with $(p_2)_\sharp \pi_n$ finite 
as given by Lemma \ref{lem:finite-approximation}. Note by restricting the first marginal of $\pi_n$ slightly 
one can assume that the support of $\pi_n$ converges in the Hausdorff metric to the support of $\pi$. Note
that since $\pi_n$ converges weakly to $\pi$ one must have $\mu(C^{n}_0)\to 1$ where $C^{n}=\operatorname{supp}\pi_n$.

If $C=\operatorname{supp}\pi$ is $c_L$-cyclically monotone 
then for all $\epsilon>0$ and for sufficiently large $n\in\N$ it holds $(C^{(n)}_t) \subset (C_t)_\epsilon$. 
Since $C_t$ is compact and $C^{(n)}_0 = C_0$ one obtains
\begin{align*}
\mathcal{L}(C_{t}) & = \lim_{\epsilon\to 0} \mathcal{L}((C_t)_\epsilon) \\
                       & = \limsup_{n\to \infty} \mathcal{L}(C^{(n)}_t) \\
                       & \ge \limsup_{n\to \infty} f(t)\mathcal{L}(C^{(n)_t}) \\
                       & = f(t) \mathcal{L}(C_0). 
\end{align*}

If the support of $\pi$ is not $c_L$-cyclically monotone, one may find a $c_L$-cyclically monotone 
subset $C\subset \operatorname{supp}\pi$ of full $\pi$-measure and compact sets 
$C^{k}\subset C$ such that $\pi(C^{k})\to \pi(C)$ and $\mathcal{L}(C^{k}_0) \to  \mathcal{L}(C_0)$.

Denote by $\pi_k$ the coupling obtained by restricting $\pi$ to $C^{k}$ and renormalizing. 
Note that each $\pi_k$ is supported in $C^{k}$ and is given as a weak limit of an appropriate restriction of the
approximating sequence $\pi_n$. In particular, one sees that the claim of the proposition holds for $C^{k}$ so 
that one concludes with the following chain of inequalities
\begin{align*}
\mathcal{L}(C_{t}) & \ge\limsup_{k\to\infty}\mathcal{L}(C_{t}^{k})\\
 & \ge\limsup_{k\to\infty}f(t)\mathcal{L}(C_{0}^{k})=f(t)\mathcal{L}(C_{0}).
\end{align*}
\end{proof}

Combining the results above one obtains the existence and uniqueness of optimal transport maps
if the target is supported in a time-slice.

\begin{prop}\label{P20}
Between any absolutely continuous probability measure $\mu$ and any probability measure $\nu$ supported in a time-slice $\{\tau=\tau_0\}$ 
such that $(\mu,\nu)\in \mathcal{P}^+_\tau(M)$
there exists a unique $c_L$-optimal coupling $\pi$ and this coupling is induced by a transport map. 
\end{prop}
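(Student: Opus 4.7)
The plan is a Monge-style argument: first to show that the specific optimal coupling provided by Proposition \ref{P3} is concentrated on the graph of a Borel map, and then to deduce uniqueness by a convex combination argument together with the same reasoning applied to the midpoint of two putative optima.

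Let $\pi$ be the optimal coupling of $(\mu,\nu)$ furnished by Proposition \ref{P3} and set $C:=\supp\pi$, which is $c_L$-cyclically monotone. I argue by contradiction that the disintegration $\pi=\int\pi_x\,d\mu(x)$ is $\mu$-a.s. a Dirac measure. Otherwise, a standard measurable selection (using separability of the time slice $\{\tau=\tau_0\}$) yields disjoint Borel sets $B^{(1)},B^{(2)}\subseteq\{\tau=\tau_0\}$ together with a subset $A'$ of positive $\mu$-measure such that $\pi_x(B^{(i)})>0$ for all $x\in A'$ and $i=1,2$. The restrictions $C^{(i)}:=C\cap(M\times B^{(i)})$ are then $c_L$-cyclically monotone subsets of $C$ with disjoint second projections and with both first projections containing $A'$, so $\mathcal{L}(C^{(1)}_0\cap C^{(2)}_0)>0$ by $\mu\ll\mathcal{L}$.

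Two ingredients close the argument. Since the second marginal is concentrated in the achronal time slice, Lemma \ref{lem:strong non-branching} forces $\mathcal{L}(C^{(1)}_t\cap C^{(2)}_t)=0$ for every $t\in[0,1)$. Re-running the inner compact-approximation scheme from the proof of Proposition \ref{P3} on each normalized restriction $\pi|_{C^{(i)}}/\pi(C^{(i)})$—still an optimal coupling between an absolutely continuous marginal and a target supported in $\{\tau=\tau_0\}$—yields the weak measure contraction bound $\mathcal{L}(C^{(i)}_t)\ge f(t)\mathcal{L}(C^{(i)}_0)$ for $t$ close to $0$. Summing these bounds and using non-branching produces
$$\mathcal{L}(C^{(1)}_t\cup C^{(2)}_t)\ge f(t)\bigl(\mathcal{L}(C_0)+\mathcal{L}(C^{(1)}_0\cap C^{(2)}_0)\bigr),$$
while the compactness of $C_0$ forces $C^{(1)}_t\cup C^{(2)}_t\subseteq(C_0)_\varepsilon$ for $t$ small. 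Passing to the limit $t\to 0$ and using $f(t)\to 1$ gives $\mathcal{L}(C_0)\ge\mathcal{L}(C_0)+\mathcal{L}(C^{(1)}_0\cap C^{(2)}_0)$, contradicting $\mathcal{L}(C^{(1)}_0\cap C^{(2)}_0)>0$. Hence $\pi=(\id,T)_\sharp\mu$ for a Borel map $T$.

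For uniqueness, let $\pi'$ be any other optimal coupling. Then $\tilde\pi:=\tfrac12(\pi+\pi')$ is optimal and its support is $c_L$-cyclically monotone. Applying the argument above to $\tilde\pi$ forces it to be concentrated on a graph, and since a convex combination of two probability measures on a fiber is a Dirac only when both summands coincide with that Dirac, one concludes $\pi_x=\pi'_x$ $\mu$-a.s., hence $\pi=\pi'$. The main obstacle in the plan is the verification that the weak measure contraction bound from Proposition \ref{P3} transfers to the normalized restrictions $\pi|_{C^{(i)}}/\pi(C^{(i)})$ in the existence step, and to the midpoint $\tilde\pi$ in the uniqueness step: this reduces to checking that the finite-target approximation of Lemma \ref{lem:finite-approximation} and the subsequent inner compact-approximation adapt without change to these sub-marginals, since their targets continue to lie in $\{\tau=\tau_0\}$.
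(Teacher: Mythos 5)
Your architecture is essentially the paper's (contradiction, splitting the target side into two disjoint pieces charged by a common set of sources of positive measure, weak measure contraction plus Lemma \ref{lem:strong non-branching}, summation), but the step you yourself flag is a genuine gap, and it is not a routine verification. Proposition \ref{P3} is an existence statement: it produces \emph{one} optimal coupling, built as a weak limit of the finite-target approximations of Lemma \ref{lem:finite-approximation}, whose full-measure cyclically monotone carriers satisfy $\mathcal{L}(C_t)\ge f(t)\mathcal{L}(C_0)$. It does not assert the bound for a coupling handed to you, and in particular not for the normalized restrictions $\pi|_{C^{(i)}}/\pi(C^{(i)})$: re-running the approximation scheme for the marginals of such a restriction yields \emph{some} optimal coupling between those marginals, with no reason for it to coincide with the restriction itself -- identifying the two presupposes exactly the uniqueness you are trying to prove. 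For the existence step this might be patched (the proof of Proposition \ref{P3} does treat compact restrictions of \emph{its} coupling), but in your uniqueness step the defect is fatal as written: you need the interpolation bound for carriers of $\tfrac12(\pi+\pi')$, an arbitrary optimal coupling, and that is precisely what is unavailable. (A minor further point: your displayed inequality uses $\mathcal{L}(C_0)$ on the right, which requires $B^{(2)}$ to be the complement of $B^{(1)}$ in the slice; otherwise it should read $\mathcal{L}\bigl(C^{(1)}_0\cup C^{(2)}_0\bigr)$.)

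The paper's proof is arranged so that the contraction bound is never needed for the given coupling. Starting from an \emph{arbitrary} optimal $\pi$ that is not a graph, the Selection Dichotomy \cite[Theorem 2.3]{kell17} produces sub-couplings $\pi_1,\pi_2\ll\pi$ with the \emph{same} first marginal $\mu_K$ (arranged mutually absolutely continuous with $\mathcal{L}|_K$) and second marginals concentrated on disjoint sets $A_1,A_2$. Proposition \ref{P3} is then applied to the marginal pairs $(\mu_K,(p_2)_\sharp\pi_i)$ to obtain \emph{fresh} couplings $\tilde\pi_i$ carrying the contraction bound; since their targets remain in the disjoint sets inside the acausal time slice, Lemma \ref{lem:strong non-branching} makes the interpolants disjoint no matter what $\tilde\pi_i$ are, while the common first marginal forces both initial slices to exhaust $K$ up to $\mathcal{L}$-null sets, and summation gives $\mathcal{L}(K)\ge 2\mathcal{L}(K)$, a contradiction. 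Because this applies to every optimal coupling, ``every optimal coupling is induced by a map'' holds, and uniqueness follows at once by your own midpoint argument. Your hand-rolled splitting can be salvaged in the same spirit: keep $B^{(1)},B^{(2)}$ and $A'$, but replace the restrictions by P3-couplings between their marginals; since both densities $x\mapsto\pi_x(B^{(i)})$ are positive on $A'$, both first marginals are equivalent to $\mu$ on $A'$, so the new carriers' time-zero slices each contain $\mu$-almost every point of $A'$, and the contradiction goes through for an arbitrary optimal coupling, which is what the uniqueness statement requires.
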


\begin{proof}

Let $\pi$ be an optimal coupling for $(\mu,\nu)$ and choose a $c_L$-cyclically monotone
measurable set $C \subset \operatorname{supp}\pi$ of full $\pi$-measure.

We claim $\pi$ is induced by a transport map. Note that this implies that $\pi$ is unique.

Suppose the statement was wrong. Then the Selection Dichotomy in \cite[Theorem 2.3]{kell17} 
gives couplings $\pi_1,\pi_2\ll \pi$ which are supported on disjoint sets $K\times A_1$ and $K\times A_2 $
and their first marginals are equal to $\mu_K = \frac{1}{\mu(K)}\mu\big|_K$, where $K\subset M$ is compact. Since $\mu$ is absolutely continuous
one can additionally assume $\mu_K$ and $\mathcal{L}\big|_K$ are mutually absolutely continuous.

It is easy to see that all three measures $\pi_1$, $\pi_2$ and $\frac{1}{2}(\pi_1 + \pi_2)$ are optimal. Thus by Proposition \ref{P3}
there are optimal couplings $\tilde{\pi}_i$ between $(\mu_K, (p_2)_\sharp \pi_i)$ such that 
the couplings $\tilde{\pi}_i$ are concentrated on disjoint $c_L$-cyclically monotone sets $C^{i}$ satisfying  
\begin{align*}
\mathcal{L}(C_{t}^{i}) & \ge f(t)\mathcal{L}(C^{i}).
\end{align*}

Let $\epsilon>1$. Then $C^i_t \subset K_\epsilon$ for sufficiently small $t$ where $K_\epsilon$ denotes the $\epsilon$-neighborhood of $K$. Since the sets 
$C^{1}_0$ and $C^{2}_0$ are disjoint by Lemma \ref{lem:strong non-branching} the sets 
$C_{t}^{1}$ and $C_{t}^{2}$ are disjoint as well so that one obtains
\begin{align*}
\mathcal{L}(K) & =\lim_{\epsilon\to0}\mathcal{L}(K_{\epsilon})\\
 & \ge\limsup_{t\to0}\mathcal{L}(C_{t}^{1}\dot{\cup}\,C_{t}^{2})\\
 & =\limsup_{t\to0}\mathcal{L}(C_{t}^{1})+\mathcal{L}(C_{t}^{2})\\
 & \ge2\limsup_{t\to0}f(t)\mathcal{L}(C_{0}).
\end{align*}
which is a contradiction as $\mu_K(K)=\mu_K(C_0)=1$ and $\mu_K$ and $\mathcal{L}\big|_K$ are mutually absolutely continuous.
\end{proof}

Using Lemma \ref{lem:strong non-branching} one can extend the proposition to general achronal target measures.

\begin{prop}
The previous preposition also holds for probability measures $\nu$ supported in an achronal set. Furthermore,
for the unique dynamical optimal coupling $\Pi$ and any $c_L$-cyclically monotone set $C\subset\operatorname{supp}\pi$
with $\pi(C)=1$ where $\pi = (\ev_0,\ev_1)_\sharp\Pi$ it holds 
\[
\mathcal{L}(C_t) \ge f(t) \mathcal{L}(C_0).
\]
\end{prop}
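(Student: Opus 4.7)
The plan is to lift Propositions \ref{P20} and \ref{P3} from the time-slice setting to general achronal $\nu$ by installing a new approximation step in place of Lemma \ref{lem:finite-approximation}. The core estimates---the weak measure contraction property (Lemma \ref{L1}) and the strong non-branching property (Lemma \ref{lem:strong non-branching})---are already formulated for achronal targets, so once a suitable approximation is available the two proofs transfer essentially verbatim.

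First I would establish the measure contraction inequality $\mathcal{L}(C_t) \ge f(t) \mathcal{L}(C_0)$ for $\nu$ achronal and $C$ a $c_L$-cyclically monotone subset of $\supp\pi$ of full $\pi$-measure, following the template of Proposition \ref{P3}. I would approximate $\nu$ by finite measures $\nu_k = \sum_i \lambda_i^{(k)} \delta_{y_i^{(k)}}$ obtained by partitioning $\supp\nu$ into Borel pieces $B_i^{(k)}$ of diameter at most $1/k$ and selecting representatives $y_i^{(k)} \in B_i^{(k)} \cap \supp\nu$, so that $\supp\nu_k \subset \supp\nu$ remains achronal for every $k$. For such a finite achronal target the inequality is immediate: Lemma \ref{L1} gives the fiberwise bound $\mathcal{L}(A_{t,y_i^{(k)}}^{i}) \ge f(t) \mathcal{L}(A^{i})$, while Lemma \ref{lem:strong non-branching} forces $\mathcal{L}(A_{t,y_i^{(k)}}^{i} \cap A_{t,y_j^{(k)}}^{j}) = 0$ for $i \ne j$, and summation yields the claim. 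The limit $k \to \infty$ is then handled by the Hausdorff-convergence argument used at the end of the proof of Proposition \ref{P3}.

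The main obstacle is producing the approximating optimal couplings $\pi_k$ of $(\mu,\nu_k)$ together with the weak convergence $\pi_k \rightharpoonup \pi'$ to an optimal coupling of $(\mu,\nu)$. The remark following Lemma \ref{lem:finite-approximation} already handles the case $\supp\pi \subset \inte J^+$: continuity of $c_L$ on the interior of the causal cone ensures $(\mu,\nu_k)\in \mathcal{P}^+_\tau(M)$ for $k$ large, and the diagonal-sequence construction of Lemma \ref{lem:finite-approximation} goes through with the finite targets now constrained to $\supp\nu$. For the general case one must treat the lightlike part of $\pi$---where $\supp\pi$ meets $\partial J^+$---separately, since the naive flow-based approximation of Lemma \ref{lem:finite-approximation} destroys achronality. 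One way out is to disintegrate $\pi$ along the level sets of $c_L$, apply the above argument to the timelike piece, and deal with the lightlike piece directly via Lemma \ref{L1} and Lemma \ref{lem:strong non-branching} without first reducing to a finite target, exploiting that on $\{c_L=0\}$ every causal coupling with the prescribed marginals is automatically optimal.

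With the measure contraction inequality in hand, existence and uniqueness of the transport map follow by repeating the proof of Proposition \ref{P20} essentially verbatim. If a hypothetical optimal coupling $\pi$ failed to be induced by a map, the Selection Dichotomy would deliver sub-couplings $\pi_1,\pi_2$ supported on disjoint sets $K\times A_1$ and $K\times A_2$; both second marginals remain concentrated in $\supp\nu$ and are therefore still achronal, so the measure contraction inequality applies to the associated dynamical couplings $\tilde\pi_i$, while Lemma \ref{lem:strong non-branching} provides $\mathcal{L}(C_t^{(1)}\cap C_t^{(2)})=0$. The chain of inequalities at the end of the proof of Proposition \ref{P20} then contradicts the mutual absolute continuity of $\mu_K$ and $\mathcal{L}|_K$. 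I expect the approximation step in the lightlike regime to be the genuine difficulty, since the remark after Lemma \ref{lem:finite-approximation} explicitly flags this as the obstacle in the analogous time-slice question.
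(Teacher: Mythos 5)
There is a genuine gap, and it sits exactly where you suspect it does: the approximation step. Your plan needs, for a general achronal target $\nu$, a sequence of \emph{finite} measures $\nu_k$ supported in the (achronal) set $\supp\nu$ with causally admissible, $c_L$-optimal couplings $\pi_k$ of $(\mu,\nu_k)$ converging to an optimal coupling of $(\mu,\nu)$. Collapsing each small Borel piece $B_i^{(k)}$ onto a representative $y_i^{(k)}$ does not in general produce a causal coupling at all: mass that $\pi$ sends to points of $B_i^{(k)}$ on the boundary of the lightcone of its source need not be transportable to $y_i^{(k)}$, so $(\mu,\nu_k)$ may fail to lie in $\mathcal{P}^+_\tau(M)$. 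This is precisely what the remark after Lemma \ref{lem:finite-approximation} flags: the flow trick used there destroys achronality, the discretization works only when $\supp\pi$ lies in the interior of $J^+$, and for purely lightlike couplings no such approximation is available. Your fallback for the lightlike piece (``deal with it directly via Lemma \ref{L1} and Lemma \ref{lem:strong non-branching} without reducing to a finite target'') is not an argument but a restatement of the problem: Lemma \ref{L1} is a single-target estimate, and to convert it into $\mathcal{L}(C_t)\ge f(t)\mathcal{L}(C_0)$ one must decompose $C$ into at most countably many pieces with a common target (or a common target slice); the observation that every causal coupling of the lightlike marginals is optimal does not produce such a decomposition. Note also that the uniqueness half of your plan quotes the proof of Proposition \ref{P20} verbatim, which in turn invokes the Proposition \ref{P3}-type contraction property for the sub-couplings produced by the Selection Dichotomy — i.e.\ it presupposes exactly the statement whose proof is the gap.

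The paper avoids finite approximation of achronal targets altogether. After splitting off the diagonal part of $\pi$, the off-diagonal part is concentrated on sets $\{\tau\le\tau_0-\tfrac1n\}\times\{\tau\ge\tau_0+\tfrac1n\}$, so every transport geodesic crosses the time-slice $\{\tau=\tau_0\}$; the intermediate measure $\mu_\sigma$ obtained by stopping at that slice falls under Proposition \ref{P20}, and uniqueness of the coupling and of the dynamical coupling, as well as the graph structure of $\pi$, are then propagated from the slice to the endpoint via non-branching of geodesics (with Corollary \ref{cor:self-intersection} entering through the auxiliary lemmas). For the interpolation inequality one does not discretize $\nu$ in space but in time: a map $\chi\colon\Gamma\to(1-\e,1]$, constant in $\tau$ on geodesics sharing an endpoint, pushes the endpoints onto countably many time-slices; the slice-wise contraction applies on each slice, Lemma \ref{lem:strong non-branching} gives disjointness of the interpolants of different slices, and a compactness/$\e\to0$ argument transfers the bound to $C_t$ itself. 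If you want to salvage your route, you would have to supply an achronality-preserving analogue of Lemma \ref{lem:finite-approximation}, which the authors explicitly regard as out of reach.
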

\begin{proof}

Assume $\pi$ is a $c_L$-optimal coupling for $(\mu,\nu)$ and choose a $c_L$-cyclically
monotone measurable set $C\subset\operatorname{supp}\pi$ of full $\pi$-measure. 

Set $C^0 = \Delta \cap C$ and $C^{>0} = C \backslash \Delta$ where $\Delta$ is the diagonal
in $M \times M$. The intersection $C^0_0 \cap C^{>0}_0$ is $\mu$-negligible. 

Indeed, all points in the intersection $C^0_0 \cap C^{>0}_0$ would have a minimizer passing through that point
which intersects two (necessarily distincts) points in $C^0_0$ and $C^{>0}_1$. Hence the 
set must be $\mathcal{L}$-negligible which also shows $\mu(C^0_0 \cap C^{>0}_0)=0$.

Observe that the claim implies that $\pi$ is induced by a transport map if and only if $\pi$ restricted to 
$C^{>0}$ is induced by a transport map. If either of the cases holds then $\pi$ must be unique.

So without loss of generality one can assume that $\pi$ is concentrated away from the diagonal $\Delta$.
In this case $\pi$ must be concentrated on $\bigcup_{\tau_{0}\in\mathbb{Q},n\in\mathbb{N}} \Omega_{\tau_{0},n}$ where
\[
\Omega_{\tau_{0},n}=\left\{\tau\le\tau_0-\frac{1}{n}\right\}\times\left\{\tau\ge\tau_0+\frac{1}{n}\right\}.
\]

Furthermore, $\pi$ is induced by a transport map if and only if for each $\tau_0 \in \mathbb{Q}$ and $n \in \mathbb{N}$,
$\pi|_{\Omega_{\tau_0,n}}$ is either 
the zero measure or induced by a transport map. Thus one may assume that $\pi$ is supported in $\Omega_{\tau_0,n}$ for some 
$\tau_0 \in \mathbb{Q}$ and $n \in \mathbb{N}$. 

Let $\sigma:\Gamma \to (0,1)$ be measurable with $\tau(\gamma_\sigma)=\tau_0$ whenever 
$\gamma(0)\le\tau_0\le\gamma(1)$. Then given an optimal dynamical coupling $\Pi$ one obtains an intermediate measure
$\mu_\sigma$ which is supported in the time-slice $\{\tau=\tau_0\}$. By Proposition \ref{P20} for any
$\Pi$ there is a unique optimal coupling $\pi_\sigma$
between $\mu$ and $\mu_\sigma$ and a measurable map $T_\sigma$ such that 
$\pi_\sigma = (\operatorname{id}\otimes T_\sigma)_\sharp \mu$. 

We claim that $\Pi$ is unique among the dynamical couplings representing $\pi$. Assume $\mu_\sigma^{'}$, $\pi_\sigma^{'}$ 
and $T_\sigma^{'}$ are obtained from a distinct optimal dynamical coupling $\Pi'$. In this case the maps $T_\sigma$ 
and $T_\sigma^{'}$ do not agree on a set of positive $\mu$-measure. By construction the measure 
$\frac{1}{2}(\pi_\sigma+\pi_\sigma^{'})$ is the unique optimal coupling between $\mu$ and 
$\frac{1}{2}(\mu_\sigma+\mu_\sigma^{'})$ which is induced by a transport map. However, this is only possible 
if $T_\sigma$ and $T_\sigma^{'}$ agree $\mu$-almost everywhere. This is a contradiction and shows
that the dynamical coupling $\Pi$ representing $\pi$ is unique. 

Note that for $\pi$-almost all $(x,y)\in M\times M$ the point $T_\sigma(x)$ is on a geodesic connecting $x$ and $y$. 
Since the value of $T_\sigma$ is unique almost everywhere and geodesics are non-branching, for $\mu$-almost all $x\in M$ 
there can be at most one geodesics $\gamma$  with $\gamma_0=x$ and $\gamma_\sigma = T_\sigma(x)$. In particular, 
for $\mu$-almost all $x\in M$ there is a unique $(x,y)\in \operatorname{supp}\pi$. But then $\pi$ is induced by a 
transport map and hence the unique optimal coupling between $\mu$ and $\nu$.

It remains to show that the interpolation inequality holds as well: Let $\Pi$ be the unique dynamical optimal coupling 
and $\pi$ be the unique induced optimal coupling. 

Let $\chi: \Gamma \to (1-\epsilon,1]$ be a measurable map such that for a set $\Gamma'$
of full $\Pi$ measure the set $\tau\circ\chi(\Gamma')$ is countable and whenever $\gamma(1)=\eta(1)$ then 
$\tau(\gamma_\chi)=\tau(\eta_\chi)$.

Let $\mu_\chi$ be the intermediate measures obtained from $\chi$. Then $\mu_\chi$ is concentrated in countably many
time-slices $\{\tau=\tau_k\}_{k\in\mathbb{N}}$. Observe that the interpolation property holds when we 
restrict the coupling to $M \times \{\tau_k\}$. Since the endpoints for two different time-slices
are disjoint Lemma \ref{lem:strong non-branching} implies that the interpolated points never intersect. 
Thus if $C$ is a $c_L$-cyclically monotone subset of $\operatorname{supp}\pi$ of full $\pi$-measure 
then the set
\[
C^{\chi}=\{(\gamma_{0},\gamma_{\chi})\,|\,\gamma\in(\ev\nolimits_{0},\ev\nolimits_{1})^{-1}(C)\}
\]
is $c_L$-cyclically monotone and has full $(\ev_0,\ev_\chi)_\sharp \Pi$-measure and it holds 
\[
\mathcal{L}(C_{t}^{\chi})\ge f(t)\mathcal{L}(C_{0}).
\]

Via approximation it suffices to show the interpolation property assuming $C$ is compact. Observe now that for compact
$C$ and all $\delta>0$ it holds
\[
C^\chi_t \subset (C_t)_\delta
\]
for $\epsilon>0$ sufficiently small. Thus

\begin{align*}
\mathcal{L}(C_{t}) & =\lim_{\delta\to0}\mathcal{L}((C_{t})_{\delta})\\
 & \ge\limsup_{\epsilon\to0}\mathcal{L}(C_{t}^{\chi})\ge f(t)\mathcal{L}(C_{0}).
\end{align*}\end{proof}

\begin{proof}[Proof of Theorem \ref{thm4}]

The only thing that is left is to show that the intermediate measures $\mu_t = (\ev_t)_\sharp \Pi$ are absolutely
continuous. For this let $C = \supp \pi$ and assume $\mu_t$ was not absolutely continuous. Then there
 is a compact set $\tilde{C}\subset C$ such that  $\mu(\tilde{C}_0)=\mu_t(\tilde{C}_t)>0$ and 
$\mathcal{L}(\tilde{C}_t)=0$. In particular, $\mathcal{L}(\tilde{C}_0)>0$. However, the interpolation property shows 
$ 0 = \mathcal{L}(\tilde{C}_t)\ge f(t)\mathcal{L}(\tilde{C}_0)$ which is clearly a contradiction
and thus proving that $\mu_t$ is absolutely continuous.
\end{proof}

The following corollary turns out to be useful in the next section.

\begin{cor}[Self-Intersection Lemma]\label{cor:self-intersection}
If $\mu$ and $\nu$ are causally related, $\mu$ is absolutely continuous and $\nu$ is supported on an 
achronal set then for all sets $A$ of full $\mu$-measure there is a $t_0 \ll 1$ such that 
the intermediate measures $\mu_t$, $t\in (0,t_0)$ satisfy $\mu_t(A)>0$. In particular, $\mu$ 
and $\mu_t$ cannot be mutually singular.
\end{cor}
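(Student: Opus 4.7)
The plan is a proof by contradiction based on the absolute continuity of $\mu_{t}$ and the interpolation inequality from Theorem \ref{thm4}, assuming WLOG (as in the proof of Theorem \ref{thm4}) that $\supp\mu$ and $\supp\nu$ are compact. Writing $\mu=\rho\,\mathcal{L}$ with density $\rho$, the condition $\mu(A)=1$ gives $\mathcal{L}(\{\rho>0\}\setminus A)=0$, so $\mu_{t}(A)=\mu_{t}(\{\rho>0\})$ by absolute continuity of $\mu_{t}$. Using inner regularity of $\mu$, I pick a compact $K\subset A$ with $\delta\le \rho\le M$ on $K$ for some $0<\delta\le M<\infty$ and $\mu(K)>0$ (so $\mathcal{L}(K)>0$). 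It then suffices to show $\mu_{t}(K)>0$ for $t$ small.

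By Theorem \ref{thm4}, the unique dynamical optimal coupling $\Pi$ yields $\mu_{t}=(T_{t})_{\sharp}\mu$ for the Borel map $T_{t}(x):=\ev\nolimits_{t}(\gamma_{x})$, where $\gamma_{x}$ is the unique optimal geodesic in $\supp\Pi$ starting at $x$; by Lemma \ref{lem:strong non-branching} and uniqueness, $T_{t}$ is $\mu$-a.e.\ injective. The interpolation inequality established in the proof of Theorem \ref{thm4}, applied to $C=\supp\pi\cap(E\times M)$, yields $\mathcal{L}(T_{t}(E))\ge f(t)\,\mathcal{L}(E)$ for every measurable $E\subset\supp\mu$, with $f(t)\to 1$ as $t\to 0$. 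Compactness of $\supp\Pi$ in the $C^{0}$-topology on $\Gamma$ (from equicontinuity of the geodesics and compactness of the supports) gives $T_{t}\to\id$ uniformly on $\supp\mu$, so $T_{t}(K)\subset N_{\epsilon(t)}(K)$ with $\epsilon(t)\to 0$.

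The key step is a density bound on $\mu_{t}$ restricted to $T_{t}(K)$. For any Borel $B\subset T_{t}(K)$, injectivity of $T_{t}$ forces $T_{t}^{-1}(B)\cap\supp\mu\subset K$, so $\mu_{t}(B)=\mu(T_{t}^{-1}(B)\cap K)\le M\,\mathcal{L}(T_{t}^{-1}(B)\cap K)$. Applying the interpolation inequality to $E=T_{t}^{-1}(B)\cap K$ (so $T_{t}(E)=B$) gives $\mathcal{L}(T_{t}^{-1}(B)\cap K)\le \mathcal{L}(B)/f(t)$, hence the density $g_{t}$ of $\mu_{t}$ satisfies $g_{t}\le M/f(t)$ almost everywhere on $T_{t}(K)$. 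Now suppose for contradiction $\mu_{t}(A)=0$: then $g_{t}=0$ a.e.\ on $\{\rho>0\}\supset K$, so $\mu_{t}(T_{t}(K)\cap K)=0$. By injectivity $\mu_{t}(T_{t}(K))=\mu(K)$, whence
\[
\mu(K)=\mu_{t}(T_{t}(K)\setminus K)\le\frac{M}{f(t)}\,\mathcal{L}(N_{\epsilon(t)}(K)\setminus K),
\]
and the right-hand side tends to $0$ as $t\to 0$ because $\mathcal{L}(N_{\epsilon}(K)\setminus K)\to 0$ by continuity of $\mathcal{L}$ from above for the compact set $K$. This contradicts $\mu(K)>0$.

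The main obstacle is the pointwise density bound on $T_{t}(K)$: direct Jacobian computations are unavailable since $T_{t}$ is not known to be Lipschitz (cf.\ the example in Section \ref{examples}). The measure-theoretic argument above, combining essential injectivity from Lemma \ref{lem:strong non-branching} and uniqueness with the integrated interpolation bound, circumvents this without imposing any regularity on $T_{t}$.
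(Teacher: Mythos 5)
Your overall strategy is exactly the one the paper uses: restrict to a piece of $\mu$ with bounded density, deduce that the interpolated measure has density at most $M/f(t)$, use $f(t)\to 1$ together with the fact that the interpolation concentrates in a shrinking neighbourhood of the starting set, and conclude that $\mu_t$ must charge $\{\rho>0\}$. The paper simply delegates the two key ingredients to \cite[Prop.\ 5.15]{kell17} (the density bound along the transport) and \cite[Lemma 6.4]{kell17} (the self-intersection step); you try to prove them by hand, and it is precisely at these two points that your argument has gaps.

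First, the $\mu$-a.e.\ injectivity of $T_t$ is not delivered by ``Lemma \ref{lem:strong non-branching} and uniqueness''. That lemma only controls crossings of transport geodesics with \emph{disjoint} endpoint sets: it places the crossing point in the $\mathcal{L}$-null set $\bigcup B_{y,y'}$, and even then you must use the absolute continuity of $\mu_t$ to convert an $\mathcal{L}$-null set of crossing points into a $\Pi$-null set of geodesics (a step you do not state). The case of two distinct transport geodesics with the \emph{same} target $y$ meeting at time $t$ is not covered at all; excluding it needs a separate argument (e.g.\ cyclical monotonicity shows the concatenation is a minimizer, smoothness of minimizers rules out a genuine corner, and then the time-affine parametrization together with strict monotonicity of $\tau$ along causal curves forces the two starting points to coincide). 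Second, the inequality $\mathcal{L}(T_t(E))\ge f(t)\,\mathcal{L}(E)$ for \emph{every} measurable $E\subset\supp\mu$ is stronger than what Theorem \ref{thm4}'s proof provides: there the inequality is proved only for $c_L$-cyclically monotone $C$ with $\pi(C)=1$, and $C_t$ is by definition the set of time-$t$ points of \emph{all} time-affine minimizers joining pairs of $C$, so in general $C_t\supsetneq T_t(C_0)$; with $E=T_t^{-1}(B)\cap K$ the resulting bound involves $\mathcal{L}(C_t)$, which is not dominated by $\mathcal{L}(B)$, so your density estimate on $T_t(K)$ does not follow as written. Both gaps are reparable: restrict and renormalize the transport to $K$ (uniqueness from Theorem \ref{thm4} identifies the restricted interpolation with the corresponding restriction of $\Pi$), note that uniqueness of $\Pi$ forces the connecting minimizer to be unique for $\pi$-a.e.\ pair so that $C_t$ and $T_t(C_0)$ agree up to null sets, and then run the mass-balance argument for the restricted interpolation measure $\mu_t^K$ directly ($1\le \mu^K_t(K)+\tfrac{M'}{f(t)}\mathcal{L}(N_{\epsilon(t)}(K)\setminus K)$), which in fact dispenses with injectivity of $T_t$ altogether and is essentially the route of \cite{kell17} that the paper invokes.
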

\begin{proof}
By restricting $\mu$ we may assume $\mu$ has density by $M$. Then uniqueness of $\mu_t$ implies 
that the density of $\mu_t$ is bounded by $M\cdot f(t)^{-1}$, see \cite[5.15]{kell17}. 
Since $f(t) \to 1$ as $t \to 0$ we see that the densities of $\mu_t$ for sufficiently small 
$t$ can be uniformly bounded. Now the claim follows directly from the  Self-Intersection Lemma
in \cite[Lemma 6.4]{kell17}.
\end{proof}
\begin{remark}
The argument shows that for $\mu=g\mathcal{L}$ and $\mu_t = g_t\mathcal{L}$
one has the estimate 
$$g_t(\gamma_t) \le \frac{1}{f(t)} g(\gamma_0)$$ 
for $\Pi$-almost all $\gamma \in \Gamma$ where $\Pi$ is the unique optimal dynamical coupling
between $\mu$ and $\nu$.
\end{remark}

\subsection{Proof of Theorem \ref{thm5}}

The goal is to reduce the problem to the $1$-di\-men\-sion\-al case and then construct a map from that solution. The proof
is very similar to the proof of Bianchini-Cavalletti \cite{BiaCav} for general non-branching geodesic spaces. However, the
lack of a natural parametrization of lightlike geodesics prevents a direct application of their proof. One of the features of the proof will be to show how the 
time function $\tau$ and time-affinely parametrized geodesics can be used to overcome this obstacle and give a complete 
solution to the Monge problem in the relativistic setting. 

Note that the proof shows that the optimal coupling is in general non-unique without assuming 
some relative form of achronality. Indeed, in order to prove uniqueness using the reduction to 
the $1$-dimensional setting on a set of full measure there must be an almost everywhere defined injective 
map from the set of transport rays to $M$ which corresponds to the target of the transport.

By \cite[Proposition 2.7]{suhr16} one knows that the any optimal coupling is concentrated on a measurable $c_L$-cyclically 
monotone set $C$.

\begin{definition}
[Maximal $c_L$-cyclically monotone set] A set $A\subset M\times M$ is {\it maximal $c_{L}$-cyclically
monotone in a set $\Sigma\subset\{c_{L}\le0\}$}
if it is $c_L$-cyclically monotone and is maximal with respect to inclusion among subsets
of $\Sigma$.  
\end{definition}

It is not difficult to see that a maximal $c_{L}$-cyclically monotone set $A$ must be closed if $\Sigma$ is closed. One calls any
maximal element $A_{max}$ of a $c_{L}$-cyclically monotone set $A$ a {\it maximal hull}. Note that the maximal hull is in general not unique. 

\begin{lemma}
Every $c_{L}$-cyclically monotone set $A\subset\{c_{L}\le0\}$ is
contained in a maximal $c_{L}$-cyclically monotone set $A_{max}\subset\{c_{L}\le0\}$.
In particular, if $(\mu,\nu)\in\mathcal{P}^+_\tau(M)$, then any
optimal coupling is supported in a maximal $c_{L}$-cyclically monotone
set $A_{max}\subset\{c_{L}\le0\}$.
\end{lemma}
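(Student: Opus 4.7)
The plan is a direct application of Zorn's lemma. The finitary character of $c_L$-cyclic monotonicity makes the verification of the chain hypothesis essentially automatic, and the second assertion follows by combining the first with the $c_L$-cyclic monotonicity of the support of any optimal coupling established in \cite{suhr16}.

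First I would introduce the family
\[
\mathcal{F} := \{B \subset \{c_L \le 0\} \mid A \subset B \text{ and } B \text{ is } c_L\text{-cyclically monotone}\},
\]
partially ordered by inclusion. Since $A \in \mathcal{F}$, the family is nonempty. To invoke Zorn's lemma I must check that every totally ordered chain $\{B_\alpha\}_{\alpha \in I} \subset \mathcal{F}$ has an upper bound in $\mathcal{F}$. The natural candidate is the union $B := \bigcup_{\alpha \in I} B_\alpha$, which trivially satisfies $A \subset B \subset \{c_L \le 0\}$.

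The nontrivial point is to verify that $B$ itself is $c_L$-cyclically monotone. Here I would exploit that cyclic monotonicity is a property of finite subsets: given any finite collection $\{(x_i,y_i)\}_{i=1}^k \subset B$ and any permutation $\sigma$ of $\{1,\ldots,k\}$, the total ordering of the chain allows one to choose a single $\alpha \in I$ with $\{(x_i,y_i)\}_{i=1}^k \subset B_\alpha$. Cyclic monotonicity of $B_\alpha$ then gives $\sum_{i=1}^k c_L(x_i,y_i) \le \sum_{i=1}^k c_L(x_i, y_{\sigma(i)})$, hence $B \in \mathcal{F}$. Zorn's lemma produces a maximal element $A_{max} \supset A$, proving the first claim.

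For the second claim, let $\pi$ be an optimal coupling for $(\mu,\nu) \in \mathcal{P}^+_\tau(M)$. By \cite[Proposition 2.7]{suhr16}, cited just before the lemma, $\pi$ is concentrated on a $c_L$-cyclically monotone set, and its finite cost forces this set to sit inside $J^+ = \{c_L \le 0\}$. Applying the first part yields a maximal $c_L$-cyclically monotone set $A_{max} \subset \{c_L \le 0\}$ containing it; the support $\supp \pi$ is then contained in $A_{max}$, possibly after replacing the initial cyclically monotone set by its closure in $J^+$, which remains cyclically monotone by continuity of $c_L$ on $J^+$. The main (and only) subtlety is this last step, namely passing from ``$\pi$ is concentrated on'' to ``$\supp \pi$ is contained in''; this is resolved by the continuity of $c_L$ on $J^+$ already used elsewhere in the paper, so the proof is essentially formal.
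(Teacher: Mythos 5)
Your Zorn's lemma argument is exactly the paper's proof: the union of a totally ordered chain of $c_L$-cyclically monotone subsets of $\{c_L\le 0\}$ is again $c_L$-cyclically monotone because monotonicity is a condition on finite subfamilies, and the second assertion follows from \cite[Proposition 2.7]{suhr16} together with the observation that finite cost forces $\pi(\{c_L\le 0\})=1$ (intersect the full-measure cyclically monotone set with $\{c_L\le0\}$ if necessary). Up to this point the proposal is correct and identical in substance to the paper, which reads ``supported in'' as concentration, i.e.\ $\pi(A_{max})=1$; no passage to $\supp\pi$ is made or needed there.

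The final step you add is, however, genuinely wrong: the closure in $J^+$ of a $c_L$-cyclically monotone set need \emph{not} be $c_L$-cyclically monotone, and continuity of $c_L$ on $J^+$ does not rescue it. Continuity controls the unpermuted terms $c_L(x_i,y_i)$, but the permuted terms $c_L(x_i,y_{\sigma(i)})$ can jump from $+\infty$ down to $0$ as a pair leaves the complement of $J^+$ and lands on $\partial J^+$; $c_L$ is only lower semicontinuous there, while the inequality needs an upper bound on the limit. Concretely, in $2$-dimensional Minkowski space take $x_1=(0,0)$, $y_1=(0,2)$, $y_2=(3,5+\tfrac1{16})$ and $x_2^n=(-1-\tfrac1n,1)$ for $n\ge 16$. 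The set $C=\{(x_1,y_1)\}\cup\{(x_2^n,y_2)\}_{n\ge16}$ is $c_L$-cyclically monotone (every nontrivial cycle uses the spacelike pair $(x_2^n,y_1)$, so the permuted sum is $+\infty$), but its closure contains $(x_2,y_2)$ with $x_2=(-1,1)$, and for the two pairs $(x_1,y_1),(x_2,y_2)$ one has $c_L(x_1,y_1)+c_L(x_2,y_2)\approx -2.71$ while $c_L(x_1,y_2)+c_L(x_2,y_1)\approx -4.08+0$, violating monotonicity: the lightlike pair $(x_2,y_1)$ has cost $0$ although all its approximants had cost $+\infty$. So you cannot upgrade ``concentrated on $A_{max}$'' to ``$\supp\pi\subset A_{max}$'' this way (this is the same lightcone phenomenon that drives Section 3.1 and Theorem 3.3 of the paper). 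Since the lemma only asserts the measure-theoretic statement, simply drop this last step; with it removed, your proof coincides with the paper's.
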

\begin{proof}
Just observe that if $\{A_{i}\}_{i\in I}$ is a chain of $c_{L}$-cyclically
monotone sets in the closed set $\{c_{L}\le0\}$ then 
$$\tilde{A}=\bigcup_{i\in I}A_{i}$$
is maximal in $\{c_L\le0\}$ and $c_{L}$-cyclically monotone. Thus Zorn's Lemma gives
the existence of a maximal element $A_{max}$ with 
$$A\subset A_{max}\subset\{c_{L}\le0\}.$$
The last statement follows by observing that a coupling with finite
cost must have support in $\{c_{L}\le0\}$.
\end{proof}

Let $A_{max}$ be a maximal $c_L$-cyclically monotone hull of the support of an 
optimal coupling $\pi$ of $\mu$ and $\nu$ in $J^+$. Further let $\Pi$ be a dynamical optimal coupling of $(\mu,\nu)$. 

\begin{lemma}\label{lem:A_x-geodesics}
For any point $(x,y)\in A_{max}$ and any point $z\in \gamma\in \Gamma_{x\rightarrow y}$ one has $(x,z),(z,y)\in A_{max}$. 
\end{lemma}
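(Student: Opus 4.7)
The plan is to use the maximality of $A_{max}$ in $\{c_L\le 0\}$. Since $\gamma\in\Gamma_{x\to y}$ is an $\mathcal{A}$-minimizer and $z$ lies on $\gamma$, the action is additive along $\gamma$:
$$c_L(x,z)+c_L(z,y)=c_L(x,y),$$
with both summands non-positive. In particular $(x,z),(z,y)\in\{c_L\le 0\}$, so by maximality it is enough to verify that $A_{max}\cup\{(x,z)\}$ and $A_{max}\cup\{(z,y)\}$ are still $c_L$-cyclically monotone subsets of $\{c_L\le 0\}$; maximality then forces $(x,z),(z,y)\in A_{max}$.

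For $A_{max}\cup\{(x,z)\}$, I would take an arbitrary finite tuple $\{(u_i,v_i)\}_{i=1}^n$ from this set together with an arbitrary permutation $\sigma$. If all pairs lie in $A_{max}$ the inequality is immediate, so assume after reindexing $(u_1,v_1)=(x,z)$ while $(u_i,v_i)\in A_{max}$ for $i\ge 2$. The case $\sigma(1)=1$ reduces to cyclic monotonicity of the sub-tuple $\{(u_i,v_i)\}_{i\ge 2}\subset A_{max}$. Otherwise set $k:=\sigma^{-1}(1)\ne 1$. Replacing $(x,z)$ by $(x,y)\in A_{max}$ in the tuple and applying the $c_L$-cyclic monotonicity of $A_{max}$ under the same $\sigma$ yields an inequality that differs from the one to be proved by exactly $c_L(z,y)$ on the left (via additivity along $\gamma$) and by the replacement of the single term $c_L(u_k,y)$ on the right by $c_L(u_k,z)$. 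The resulting discrepancy is precisely controlled by the reverse triangle inequality
$$c_L(u_k,y)\le c_L(u_k,z)+c_L(z,y),$$
which holds for all triples in $M$ with the convention that a $+\infty$ on the right makes it trivial.

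The argument for $A_{max}\cup\{(z,y)\}$ is entirely symmetric: one substitutes $(x,y)\in A_{max}$ for $(z,y)$, applies $c_L$-cyclic monotonicity of $A_{max}$ to the modified tuple, and absorbs the resulting discrepancy using the triangle inequality $c_L(x,v_{\sigma(\ell)})\le c_L(x,z)+c_L(z,v_{\sigma(\ell)})$ for the index $\ell$ such that $u_\ell=z$ on the original left side. The only real issue is the careful bookkeeping of indices under the permutation $\sigma$; conceptually the proof reduces to the additivity of $c_L$ along the minimizer $\gamma$ and the global reverse triangle inequality for $c_L$, both of which are standard features of the time separation in a globally hyperbolic Lorentz--Finsler manifold.
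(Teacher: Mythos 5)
Your proof is correct and follows essentially the same route as the paper: both verify that adjoining $(x,z)$ (resp.\ $(z,y)$) to $A_{max}$ preserves $c_L$-cyclic monotonicity by substituting the pair $(x,y)\in A_{max}$, using the additivity $c_L(x,z)+c_L(z,y)=c_L(x,y)$ along the minimizer, the triangle inequality for $c_L$, and cancellation of the finite term $c_L(z,y)$ (resp.\ $c_L(x,z)$), after which maximality forces the pairs into $A_{max}$. The paper simply writes this out for the cyclic-shift inequality, whereas you phrase it for a general permutation; the content is identical.
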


\begin{proof}
Let $\{(x_i,y_i)\}_{1\le i\le N}\subset A_{max}$. Then one has 
\begin{align*}
c_L(x,z)&+c_L(z,y)+\sum_{i=1}^N c_L(x_i,y_i) =c_L(x,y)+\sum_{i=1}^N c_L(x_i,y_i)\\
&\le c_L(x,y_1)+\sum_{i=1}^{n-1} c_L(x_i,y_{i+1}) +c_L(x_N,y)\\
&\le c_L(x,y_1)+\sum_{i=1}^{n-1} c_L(x_i,y_{i+1}) +c_L(x_N,z)+c_L(z,y)
\end{align*}
where the next to last inequality follows from the cyclic monotonicity and the last inequality is the triangle inequality for $c_L$. This implies that 
$(x,z)\in A_{max}$. The other case is analogous. Note that $c_L(x,y)<\infty$ since $(x,y)\in J^+$ and thus $c_L(x,z),c_L(z,y)<\infty$.
\end{proof}

Consider the relation $R\subset M\times M$ with 
$$(x,y)\in R:\Leftrightarrow (x,y)\in A_{max} \text{ or }(y,x)\in A_{max}.$$
Set $R_{>1}:=\{(x,y)\in R|\; \exists z\neq x: (x,z)\in R\}$. Then one can assume without loss of generality that $R_{>1}$ has full measure relative to any 
optimal coupling. This follows from the observation that on $R\setminus R_{>1}$ all optimal transports are constant. It is assumed from here on that 
$R=R_{>1}$.

Next define the following two sets:
$$A^+:=\{x\in M|\; \exists z\neq w\in M: (x,z),(x,w)\in A_{max}\text{ and }(z,w)\notin R\}$$
and 
$$A^-:=\{y\in M|\; \exists x\neq z\in M: (x,y),(z,y)\in A_{max}\text{ and }(x,z)\notin R\}.$$

Assume the disintegration of $\pi$ with respect to the first projection
is given by 
\[
\pi=\mu\otimes\pi_{x}.
\]

\begin{lemma}\label{lem:exclude-Aplus}
For $\mu$-almost all $x\in A^{+}$ the measures $\pi_{x}$ are supported
in $\{(x,x)\}$.
\end{lemma}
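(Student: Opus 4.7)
The argument proceeds by contradiction. Setting
\[
B := \{x \in A^+ : \pi_x(M \setminus \{x\}) > 0\},
\]
I suppose $\mu(B) > 0$. The plan is to use Lemma~\ref{lem:A_x-geodesics} to ``thicken'' the configuration at each $x \in B$ into enough $A_{max}$-pairs that cyclic monotonicity forces $(Z(x), W(x)) \in R$, contradicting the definition of $A^+$.

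First I would apply measurable selection (Kuratowski--Ryll-Nardzewski) to the Borel relation encoding ``$y \in \supp \pi_x \setminus \{x\}$, $(x,y),(x,z),(x,w) \in A_{max}$, $z \neq w$, $(z,w) \notin R$,'' obtaining Borel maps $T, Z, W : B \to M$ realizing these properties. For each $x \in B$ I then fix time-affinely parametrized minimizers $\gamma^T_x, \gamma^Z_x, \gamma^W_x \in \Gamma$ from $x$ to $T(x), Z(x), W(x)$. Lemma~\ref{lem:A_x-geodesics} places all pairs of the form $(x, \gamma^\bullet_x(s))$ and $(\gamma^\bullet_x(s), \mathrm{endpoint})$ in $A_{max}$, providing a rich pool of pairs to insert into cyclic monotonicity inequalities.

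Next, I apply $c_L$-cyclic monotonicity to the 3-cycle
\[
\{(x, T(x)),\ (\gamma^Z_x(s), Z(x)),\ (\gamma^W_x(s), W(x))\} \subset A_{max}
\]
and to its ``swap'' obtained by interchanging the roles of $Z$ and $W$. Using the geodesic decomposition $c_L(x, Z(x)) = c_L(x, \gamma^Z_x(s)) + c_L(\gamma^Z_x(s), Z(x))$ (and similarly for $W$), the resulting inequalities should collapse, in the limit $s \to 0^+$, to a relation forcing one of $c_L(Z(x), W(x))$, $c_L(W(x), Z(x))$ to be finite and moreover compatible with insertion into $A_{max}$. By maximality of $A_{max}$ this places $(Z(x), W(x))$ or $(W(x), Z(x))$ in $A_{max}$, contradicting $(Z(x), W(x)) \notin R$.

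The principal obstacle is this last step. The cyclic-monotonicity inequality is only informative when the permuted costs are finite, so I have to verify that pairs such as $(\gamma^Z_x(s), W(x))$ and $(\gamma^W_x(s), T(x))$ are causally related; for $s$ small this should follow from global hyperbolicity and $(x, W(x)), (x, T(x)) \in J^+$, but the limiting behavior at the lightlike boundary will require care. Should the direct cyclic-monotonicity route prove fragile, my fallback is a measure-theoretic one: apply the weak measure contraction property (Lemma~\ref{L1}) and the Self-Intersection Lemma (Corollary~\ref{cor:self-intersection}) to the dynamical optimal coupling $\Pi$ restricted to $B$; the simultaneous existence of the non-trivial transport direction $T$ and of two further forward branches $Z, W$ at each $x \in B$ should force the intermediate measures $\mu_t = (\ev_t)_\sharp \Pi$ to over-concentrate mass near the branching loci for small $t$, contradicting the measure-contraction bound.
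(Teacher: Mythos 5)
Your primary route cannot work, and the obstruction is structural rather than technical. The lemma is a genuinely measure-theoretic statement: it holds only for $\mu$-almost every $x\in A^+$ and only because $\mu$ is absolutely continuous. Pointwise, a configuration in which $x$ carries two forward branches $Z(x)\neq W(x)$ with $(Z(x),W(x))\notin R$ \emph{and} a nontrivial transport target is perfectly compatible with $c_L$-cyclic monotonicity and with maximality of $A_{max}$ (take an atomic $\mu$ splitting its mass at $x$ along the two branches: the resulting coupling is cyclically monotone and optimal), so no argument that never uses absolute continuity of $\mu$ can prove the claim. Concretely, your $3$-cycle gives nothing: writing the cyclic-monotonicity inequality for $\{(x,T(x)),(\gamma^Z_x(s),Z(x)),(\gamma^W_x(s),W(x))\}$, inserting $c_L(x,Z(x))=c_L(x,\gamma^Z_x(s))+c_L(\gamma^Z_x(s),Z(x))$ and letting $s\to0^+$ collapses the inequality to the trivial identity $c_L(x,T(x))+c_L(x,W(x))\le c_L(x,W(x))+c_L(x,T(x))$; it does not force finiteness of $c_L(Z(x),W(x))$ or $c_L(W(x),Z(x))$, and indeed the two branch endpoints may simply fail to be causally related (and when a branch endpoint is only null-related to $x$, cross terms such as $c_L(\gamma^Z_x(s),W(x))$ are $+\infty$ and the inequality is vacuous). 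Finally, even a finite cross cost would not let you adjoin $(Z(x),W(x))$ to $A_{max}$: maximality requires cyclic monotonicity of the enlarged set against \emph{all} finite subsets of $A_{max}$, not against one $3$-cycle.

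Your fallback names the right tools but does not contain the mechanism, and it is not the paper's argument. The paper argues by contradiction as follows: by restriction one may assume $\mu(A^+)=1$ and $\operatorname{supp}\pi\subset\{\tau\le\tau_0-\varepsilon\}\times\{\tau\ge\tau_0+\varepsilon\}$; pushing the dynamical coupling to the slice $\{\tau=\tau_0\}$ yields an intermediate measure $\mu_\sigma$ supported in a time slice, so Proposition \ref{P20} applies and the coupling of $(\mu,\mu_\sigma)$ is unique and induced by a map $T_\sigma$; the branching structure defining $A^+$ then shows that for $\mu$-almost every $x\in A^+$ the geodesic from $x$ to $T_\sigma(x)$ meets $A^+$ at most in its initial point, hence the interpolants $\mu_{\sigma,t}$ give zero mass to $A^+$ for $t>0$; since $A^+$ has full $\mu$-measure this contradicts the Self-Intersection Lemma (Corollary \ref{cor:self-intersection}), which forces $\mu_{\sigma,t}(A^+)>0$ for small $t$. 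The contradiction is thus ``zero versus positive mass on $A^+$'', not an ``over-concentration near the branching loci'' as in your sketch, and reaching it requires three steps absent from your proposal: the reduction to $\mu(A^+)$ of full measure, the passage to a time slice where uniqueness of the transport map is available, and the verification that interpolated geodesics avoid $A^+$ after time $0$. As it stands the proposal has a genuine gap.
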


\begin{proof}
If $A^+$ is $\mu$-negligible there is nothing to prove. Therefore one can assume by \cite[Corollary 3.12]{suhr16} that $\mu(A^+)=1$. After possibly further 
restricting the transport problem one can suppose that 
\[
\operatorname{supp}\pi\subset\{\tau\le\tau_{0}-\varepsilon\}\times\{\tau\ge\tau_{0}+\varepsilon\}
\]
for a sufficiently small $\e>0$. Let now 
$$\Gamma_{\tau_0}:=\{\gamma\in \Gamma|\; \tau(\gamma(0))\le \tau_0\le \tau(\gamma(1))\}$$
and $\sigma:\Gamma_{\tau_0}\to[0,1]$ be the map defined by $\tau(\ev(\gamma,\sigma(\gamma)):=\tau_{0}$. 
Note that $\Pi$ is supported in $\Gamma_{\tau_0}$ under the above assumptions. Then 
$$\mu_{\sigma}:=(\ev\circ(\id,\sigma))_\sharp\Pi$$
is an intermediate measure of $\mu$ and $\nu$ which is supported
in the time-slice $\{\tau=\tau_{0}\}$. Thus by Proposition \ref{P20} there is a unique coupling
which, in addition, is induced by a transport map $T_{\sigma}$. The
assumption shows that for $\mu$-almost all $x\in A^{+}$ the (unique)
geodesics connecting $x$ and $T_{\sigma}(x)$ never intersects $A^{+}$.
Thus $\mu_{\sigma,t}(A^{+})=0$ for the any intermediate measure $\mu_{\sigma,t}$
between $\mu$ and $\mu_{\sigma}$. This, however, violates Corollary
\ref{cor:self-intersection}.
\end{proof}
\begin{remark}
Under the assumptions of Theorem \ref{thm5} it is even possible to show $\mathcal{L}(A^{+})=0$. For 
this one needs to know whether any coupling $\tilde{\pi}$
concentrated in $A_{\max}\cap(A^{+}\times\{\tau=\tau_{0}\})$ is
induced by a transport map. By Theorem \ref{thm4} a sufficient condition
would be that $\tilde{\pi}$ is optimal. 
\end{remark}

If $\pi_{x}=\delta_{x}\otimes\delta_{x}$ for all $x$ in a measurable
set $A\subset M$ then $\pi\big|_{A\times M}=(\operatorname{id}\times\operatorname{id})_{\sharp}\mu\big|_{A}$.
Thus in the following one will always assume that  $\pi_{x}\ne\delta_{x}\otimes\delta_{x}$
for $\mu$-almost all $x\in M$. In particular, the measures
$\mu$ and $(p_{1})_{\sharp}(\pi\big|_{M\times M\backslash\Delta})$
are mutually absolutely continuous. In combination with Lemma \ref{lem:exclude-Aplus} one concludes that $\mu(A^{+})=0$.

Given a symmetric relation $R\subset M\times M$ let the {\it domain of $R$} be defined by
$$\dom(R):=p_{1}(R).$$
In the following one will use the following short hand notation to
define a new symmetric relations $R'\subset R$: $R'$ {\it is given by $\dom(R')=A$} for a subset $A\subset\dom R$ if 
\[
R'=A\times A\cap R.
\]
One easily verifies that $\dom R'$ is indeed equal to $A$. Furthermore,
if $A$ is (Borel) measurable then $R'$ is (Borel) measurable or analytic
if $R$ is (Borel) measurable or analytic, respectively. 

Let $R_{red}$ be obtained by requiring $\dom(R_{red})=\dom(R)\setminus A^+\cup A^-$. Then one may verify that $R_{red}$ is an equivalence relation.

Decompose $\mu$ into two measures $\mu_1$ and $\mu_2$ such that $\mu_1$ is concentrated $\dom(R_{red})$ and $\mu_2$ on $A^-$. Choose an 
optimal coupling $\pi$ along the same decomposition $\mu = \mu_1 + \mu_2$. Denote the second marginals by $\nu_1$ and $\nu_2$, respectively. By 
the definition of $A^-$ one sees that $\pi_2$ is concentrated on the diagonal. Thus if one finds an optimal coupling $\tilde{\pi}_1$ between $\mu_1$ 
and $\nu_1$ which is induced by a transport map then $\tilde\pi = \tilde\pi_1 + \pi_2$ is an optimal coupling between $\mu_1$ and $\nu_1$ which is 
induced by transport maps. Thus one may assume $\mu(A^\pm)=0$.

\begin{lemma}
There exists a measurable projection $T\colon \text{dom}(R_{red})\to \text{dom}(R_{red})$ with $(x,T(x))\in R_{red}$. 
\end{lemma}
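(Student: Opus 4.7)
The plan is to select a canonical representative in each equivalence class of $R_{red}$ by using the time function $\tau$ to parametrize each class, and to verify measurability of the resulting projection.

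\textbf{Step 1 (Structure of the classes).} I would first argue that each equivalence class $[x]_{R_{red}}$ lies on a single time-affinely parametrized geodesic. Indeed, for $y, z \in [x]_{R_{red}}$, membership in $R_{red}$ gives $(y,z) \in A_{max}$ or $(z,y) \in A_{max}$. Combining Lemma \ref{lem:A_x-geodesics} with the non-branching property of time-affinely parametrized geodesics (available in both future and past directions since points in $\dom(R_{red})$ were explicitly removed from $A^+ \cup A^-$), one sees that any two points of $[x]_{R_{red}}$ lie on a common $\mathcal{A}$-minimizer. In particular $\tau$ is strictly monotone, hence injective, on $[x]_{R_{red}}$, and the image $\tau([x]_{R_{red}}) \subset \R$ is a non-degenerate interval (the reduction $R = R_{>1}$ ensures each class has at least two points).

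\textbf{Step 2 (Rational partition).} Enumerate $\Q = \{q_n\}_{n\in\N}$. For each $n$ set
\[
E_n := \{x \in \dom(R_{red}) \mid q_n \in \tau([x]_{R_{red}})\} = p_1\bigl(R_{red} \cap (M \times \tau^{-1}(q_n))\bigr).
\]
Each $E_n$ is saturated with respect to $R_{red}$ (it depends only on the class) and is analytic as a projection of an analytic set. Since every class meets $\Q$, the sets $D_n := E_n \setminus \bigcup_{k < n} E_k$ partition $\dom(R_{red})$ into saturated, universally measurable pieces.

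\textbf{Step 3 (Definition of $T$).} For $x \in D_n$, define $T(x)$ to be the unique $y \in [x]_{R_{red}}$ with $\tau(y) = q_n$; existence is the definition of $E_n \supset D_n$, and uniqueness is Step 1. The graph of $T$ restricted to $D_n$ is
\[
\{(x,y) \in D_n \times M \mid (x,y) \in R_{red},\; \tau(y) = q_n\},
\]
which is universally measurable. Since the choice of $n$ and then of $T(x)$ depends only on $[x]_{R_{red}}$, the map satisfies $T \circ T = T$ and $(x, T(x)) \in R_{red}$.

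\textbf{Main obstacle.} The substantive step is Step 1: the justification that removing $A^+$ (resp.\ $A^-$) really forces all future- (resp.\ past-)relatives of $x$ to align along one geodesic, so that $\tau$ becomes injective on each class. Once this structural fact is in hand, Steps 2 and 3 are a routine application of the rational exhaustion and a measurable-graph argument, and no abstract selection theorem beyond analytic measurability is needed.
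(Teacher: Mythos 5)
Your construction is correct and follows the same skeleton as the paper's proof: index a countable partition of $\dom(R_{red})$ by the first rational $q_n$ whose time slice meets the equivalence class, then map each point to a representative of its class on that slice. The difference lies in how the representative is produced. The paper builds the partition inductively through auxiliary equivalence relations $R_n$ and then invokes an (uncited) measurable selection $S_r$ on the analytic set $R_{red}\cap(M\times\{\tau=r\})$, so it never needs uniqueness of the representative at this stage; you instead establish first that each class lies on a single geodesic on which $\tau$ is injective --- which in the paper is exactly the content of the \emph{subsequent} lemma on $R_x$, proved with the ingredients you name (Lemma \ref{lem:A_x-geodesics}, exclusion of $A^+\cup A^-$, non-branching, strict monotonicity of $\tau$ along causal curves) --- and this uniqueness lets you avoid any abstract selection theorem, reading measurability off the analytic graph, and yields a $T$ that is constant on classes and idempotent, a property the paper's selection-based $T$ need not literally have although it is what the later disintegration argument really wants. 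One caveat applies to both arguments and you should make it explicit: the coverage claim that every class meets some rational time slice \emph{inside} $\dom(R_{red})$ (your assertion that $\tau([x]_{R_{red}})$ is a non-degenerate interval; the paper's one-line appeal to non-constancy of the minimizers) requires checking that the intermediate points supplied by Lemma \ref{lem:A_x-geodesics} do not themselves lie in $A^+\cup A^-$, or otherwise arguing that enough of the class survives the reduction; the paper glosses over this point, so your identification of Step 1 as the substantive obstacle is apt, but the interval statement itself needs a short justification rather than an assertion.
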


\begin{proof}
Choose an enumeration $\{q_n\}_{n\in \N}$ of $\Q$. Define inductively disjoint relations $\{R_n\}_{n\in\N}$ as follows: Set  $\tilde{R}_0:=R_{red}$. Assume that $R_k$ for $k\le n$ has been
constructed. Define $R_{n+1}$ by 
$$(x,z)\in R_{n+1}:\Leftrightarrow (x,z)\in \tilde{R}_n\wedge\exists y,y'\in \{\tau=q_{n+1}\}: (x,y),(z,y')\in \tilde{R}_n.$$
$R_{n+1}$ is an equivalence relation since it is the intersection of two equivalence relations. 
Thus $\tilde{R}_{n+1}:=\tilde{R}_n\setminus R_{n+1}$ is an equivalence relation. Continuing one obtains a measurable partition $\{R_n\}_{n\in \N}$ of $R_{red}$. 
This follows from the initial assumption that all minimizer are non-constant. 

For all $r\in \R$ there exists a measurable selection 
$$S_r\colon p_1(R_{red}\cap (M\times \{\tau=r\}))\to \dom(R_{red})\cap \{\tau=r\}.$$ 
Define the map 
$$T\colon \dom R_{red}\to \dom R_{red},\; x\mapsto S_{q_n}(x)\text{ for }x\in R_n.$$
\end{proof}

Disintegrate $\mu$ along $T$, i.e. for $\mu_{red}:=(T)_\sharp \mu$ let $\{t_x\}_{x\in \dom(R_{red})}$ be the almost everywhere defined family of 
probability measures on $T(\dom(R_{red}))$ such that 
$$\mu=\mu_{red}\otimes t_x.$$

\begin{lemma}
For $\mu_{red}$-almost all $x\in M$ is the measure $\bar{t}_{x}=\tau_{\#}(t_{x})$
is non-atomic.
\end{lemma}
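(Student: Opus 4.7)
I argue by contradiction. Suppose there is a Borel set $E\subseteq T(\dom(R_{red}))$ with $\mu_{red}(E)>0$ on which every $\bar{t}_x$ admits an atom. Since each $\bar{t}_x$ is a finite Borel measure on $\R$ having at most countably many atoms, a standard measurable-selection argument combined with a countable decomposition of $\R$ into dyadic intervals provides, after replacing $E$ by a non-negligible Borel subset, constants $\e>0$ and $\delta>0$, a point $\tau_0\in\R$, and a Borel map $\sigma\colon E\to(\tau_0-\delta,\tau_0+\delta)$ with
\[
\bar{t}_x(\{\sigma(x)\})\ge \e\qquad\text{for every }x\in E.
\]

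The inequality $-d\tau(v)\le -|v|$ for $v\in\mathcal{C}$ (established via the splitting in Section \ref{results}) implies that $\tau$ is strictly increasing along every non-constant causal curve, and hence injective on every transport ray. Therefore for each $x\in E$ there is a unique point $y_x\in T^{-1}(x)$ with $\tau(y_x)=\sigma(x)$. Setting $Y:=\{y_x\mid x\in E\}\subset \dom(R_{red})$, the disintegration $\mu=\mu_{red}\otimes t_x$ yields
\[
\mu(Y)\ge \int_E t_x(\{y_x\})\,d\mu_{red}(x)\ge \e\,\mu_{red}(E)>0,
\]
and absolute continuity of $\mu$ forces $\mathcal{L}(Y)>0$.

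The contradiction then arises from the key structural observation that $Y$ meets every transport ray in exactly one point. Since transport rays lie on time-affinely parametrized $\mathbb{L}$-geodesics and the map $\exp^\tau$ is a $C^1$-diffeomorphism from a neighborhood of the zero section onto its image (see the proof leading to Corollary \ref{C1a}), one can locally foliate a neighborhood of $Y$ by geodesics of the form $s\mapsto \gamma_{v(p)}(s)$ where $p$ ranges over a piece of the transverse slice $\{\tau=\tau_0\}$ and $v(p)\in\mathcal{C}_p$ is the Borel direction field inherited from the transport map $T$. In these coordinates $Y$ becomes the graph of a Borel function over the transversal, and a Fubini argument in the parameter $s$ gives $\mathcal{L}(Y)=0$, contradicting the previous step.

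The principal technical obstacle is this final Fubini step, because the direction field $p\mapsto v(p)$ is only Borel and a direct $C^1$ change of variables is unavailable. This is overcome by a Luzin-type regularization on sets of large $\mu_{red}$-measure together with the smooth dependence of $\exp^\tau$ on the initial velocity; the resulting local $C^1$-parametrizations suffice to apply Fubini. This pattern is the one standard in the framework of \cite{BiaCav} and \cite{CH2015NBTrans}, whose strategy the proof of Theorem \ref{thm5} already adapts.
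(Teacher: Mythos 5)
Your first half is sound and parallels the paper's own first step: a measurable selection of atoms with a uniform mass lower bound $\e$ on a non-negligible set $E$, the identification of each atom with a single point $y_x$ on the ray through $x$ (using injectivity of $\tau$ on rays, which strictly speaking is the content of the subsequent lemma on $R_x$), and the conclusion $\mu(Y)\ge\e\,\mu_{red}(E)>0$, hence $\mathcal{L}(Y)>0$.

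The gap is the final step, and it is not a mere technicality: the principle you invoke --- that a Borel set meeting each transport ray in exactly one point must be $\mathcal{L}$-negligible, proved by straightening the ray foliation and applying Fubini --- is false at this level of generality. This is precisely the gap in Sudakov's original approach to the Monge problem: by Larman's example there is a compact family of pairwise disjoint straight segments in $\R^3$ (so geodesics of the flat metric, exactly the situation your local model produces) together with a selection of one interior point per segment whose union has positive Lebesgue measure. The obstruction is that the direction field inherited from an optimal coupling is only Borel, and a Lusin-type regularization gives continuity on a large set but no Lipschitz or Jacobian control, which is what a change of variables/Fubini argument actually needs; moreover the ``large set'' is large for a measure on the transversal that need not see the bad rays. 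Neither \cite{BiaCav} nor \cite{CH2015NBTrans} argues this way: the regularity that rescues the conclusion must come from optimality, via evolution/measure-contraction estimates along the rays. That is exactly the route the paper takes here: from the selected atoms one builds a sub-coupling $\pi'\ll\pi$ induced by a map onto the atom set, and then the intermediate measures of $\pi'$ (pushed into a time slice, where Proposition \ref{P20} applies) would be mutually singular with respect to the source, contradicting the Self-Intersection Lemma (Corollary \ref{cor:self-intersection}), which rests on the weak measure contraction property (Lemma \ref{L1}). To repair your proof you should replace the foliation--Fubini step by an argument of this type; as written, the ``principal technical obstacle'' you defer is the actual heart of the lemma, and the proposed Lusin fix does not overcome it.
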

\begin{proof}
By the assumptions the statement holds for $\pi$ if it holds for
$\pi$ restricted to $M\times M\backslash\Delta$. In particular,
one can assume that for $\pi$-almost all $(x,y)\in M\times M$ one has $\tau(x)<\tau(y)$. 

Assume now for a set $A$ of positive $\mu_{red}$-measure the measure
$\bar{t}_{x}$ has atoms for all $x \in A$. Then there is a compact set $K\subset M$ of positive
$\mu$-measure such that the map 
$$K\to\mathcal{P}(\mathbb{R}),\; x\mapsto\bar{t}_{x}$$ 
is weakly continuous. Thus the function
$$F\colon K\times\mathbb{R}\to[0,1],\; F(x,r)=\bar{t}_{x}(\{r\})$$ 
is upper semi-continuous. In particular, the set 
\[
C=F^{-1}((0,1])
\]
is a Borel set and for each $(x,r)\in C$ the point $r$ is an atom
of $\bar{t}_{x}$. Applying the Selection Theorem \cite[Section 423]{Fremlin2006} to $C$ yields a measurable selection $T\colon p_1(C)\to C$ such that $(x,T(x))\in C$ for all $x\in p_1(C)$.
In particular, $\mu_{red}\big|_K\otimes\delta_{T(x)}$ is non-trivial. Since $t_x$ is atomic for all $x\in K$ one also has
\[
\mu_{red}\big|_K\otimes\delta_{T(x)}\ll\mu_{red}\otimes\bar{t}_{x}.
\]

Translating back to the coupling $\pi$ one sees that there exists a measurable
map $S\colon M\to M$ and a set $\tilde{K}$ of positive $\mu$-measure
such that 
$$\pi':=\frac{1}{\mu(K)}(\operatorname{id}\otimes S)_{\sharp}\mu\big|_{K}\ll\pi$$
and for all $x\ne y\in\tilde{K}$ one has $(x,T(y))\notin R_{red}$
and $\tau(x)<\tau(T(x))$. 

This implies that for any $\sigma:\Gamma\to(0,1)$ the intermediate
measures $\mu_{\sigma}^{'}$ of $(p_{1})_{\sharp}\pi'$ and $(p_{2})_{\sharp}\pi'$
would be mutually singular with respect to $(p_{1})_{\sharp}\pi'$. However, as
in the proof of Lemma \ref{lem:exclude-Aplus}, this yields a contradiction.
\end{proof}
\begin{remark}
A more involved proof shows that $\bar{t}_{x}$ is absolutely
continuous. As this strengthened statement is not needed the details are left to 
the interested reader.
\end{remark}

Disintegrating an optimal coupling $\pi$ along $T\circ p_1$ yields a family of probability measures $\{s_x\}$ such that 
$$\pi=\mu_{red}\otimes s_x,$$ 
where $s_x$ is a probability measure on 
$$(R_x\cap\dom(R_{red}))\times R_x\text{ with }R_x:=p_2((\{x\}\times M)\cap R).$$ 

\begin{lemma} For all $x\in T(\dom(R_{red}))$ the set $R_x$ is diffeomorphic to an interval and the 
time function $\tau$ is injective on $R_x$.
\end{lemma}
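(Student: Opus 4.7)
The plan is to show that $R_x$ coincides with the image of a single time-affinely parametrized minimizing geodesic through $x$, so that $\tau$ realizes a diffeomorphism between $R_x$ and an interval containing $\tau(x)$.

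First, the injectivity of $\tau|_{R_x}$ follows from the splitting bound $-d\tau(v)\le -|v|$, which gives $d\tau(v)\ge |v|>0$ on nonzero causal vectors and hence strict monotonicity of $\tau$ along every non-constant causal curve. Any two distinct points in $R_x$ are causally related by the definition of $R$, so they cannot share a $\tau$-level.

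Next, using $x\notin A^+$, I would show that the minimizer from $x$ to any $y_0\in R_x\cap J^+(x)$ with $y_0\neq x$ is unique. If $\gamma_A\neq\gamma_B$ were two such minimizers, then by Lemma \ref{lem:A_x-geodesics} every point on each lies in the forward fiber $\{z:(x,z)\in A_{max}\}$. Picking $z\in\gamma_A$ and $w\in\gamma_B$ at a common intermediate $\tau$-level, $x\notin A^+$ forces $(z,w)\in R$, and the $\tau$-injectivity from step one gives $z=w$. Running this through every intermediate $\tau$-level yields $\gamma_A=\gamma_B$. Symmetrically, $x\notin A^-$ gives uniqueness for minimizers ending at $x$.

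Cyclic monotonicity applied to $(x,y_1),(x,y_0)\in A_{max}$ together with $(y_1,y_0)\in A_{max}$ (which holds when $\tau(y_1)<\tau(y_0)$ by $x\notin A^+$ and step one), combined with the reverse triangle inequality, yields $c_L(x,y_0)=c_L(x,y_1)+c_L(y_1,y_0)$. Consequently any concatenation of minimizers $x\to y_1\to y_0$ is itself a minimizer from $x$ to $y_0$, smooth by Proposition \ref{P1}, and by the uniqueness in step two it coincides with the unique geodesic $\gamma_{y_0}$. Thus every $y_1\in R_x\cap J^+(x)$ with $\tau(y_1)<\tau(y_0)$ lies on $\gamma_{y_0}$, and letting $\tau(y_0)$ range over $\tau(R_x\cap J^+(x))$ shows that $R_x\cap J^+(x)$ is contained in a single future-directed time-affine geodesic emanating from $x$. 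The same argument for the past, together with the identity $c_L(y^-,y^+)=c_L(y^-,x)+c_L(x,y^+)$ obtained from cyclic monotonicity on $(y^-,x),(x,y^+)$, shows that $R_x\cap J^-(x)$ is a smooth extension of this geodesic across $x$.

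Finally, Lemma \ref{lem:A_x-geodesics} guarantees that whenever $y\in R_x$ the entire geodesic segment from $x$ to $y$ lies in $R_x$; hence $\tau(R_x)$ is a single interval containing $\tau(x)$, and the time-affine parametrization of the underlying geodesic turns the bijection $\tau\colon R_x\to\tau(R_x)$ into a diffeomorphism. The principal obstacle is step two: ruling out branching of minimizers at $x$ requires simultaneous use of $x\notin A^{\pm}$, Lemma \ref{lem:A_x-geodesics} and the strict monotonicity of $\tau$, which is precisely what the construction of $R_{red}$ is tailored to provide.
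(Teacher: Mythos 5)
Your proof is correct and follows essentially the same route as the paper's: it rests on Lemma \ref{lem:A_x-geodesics}, the hypothesis $x\notin A^+\cup A^-$, and the strict monotonicity of $\tau$ along causal curves, merely making explicit (via the equal-$\tau$-level comparison and cyclic monotonicity) the non-branching step that the paper invokes in one line. One small imprecision: the pairwise causal relatedness of distinct points of $R_x$ in your first step does not follow ``by the definition of $R$'' alone but needs $x\notin A^+\cup A^-$ (or transitivity through $x$ when the points lie on opposite sides), which you do invoke in the later steps anyway.
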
 
\begin{proof}
From Lemma \ref{lem:A_x-geodesics} and $x\in R_x$ one sees that $R_x$ is formed by the image of geodesics which contain $x$ and meet at most at their endpoints. As $x\in R_x$ is not in $A^+$ or $A^-$, it must be in the interior of $R_x$.
Thus, because geodesics are non-branching and the time function $\tau$ is strictly increasing along causal curves one sees that $R_x$
is the image of precisely one geodesic. 
\end{proof}

Define for $x\in T(\dom(R_{red}))$ the measures $r_x:=(p_2)_\sharp(s_x)$, i.e. 
$$\nu=\mu_{red}\otimes r_x.$$

Next one constructs a transport map for the optimal couplings between $t_x$ and $r_x$. 
By the previous Lemma the measures $t_x$ and $r_x$ are concentrated on a single geodesic 
such that the time function $\tau$ give a uniquely defined parametrization. Thus it suffices 
to solve the one-dimensional optimal transport problem between $t_x$ and $r_x$. First observe the following.

\begin{lemma}\label{L21}
Let $\gamma \in \Gamma$ and $\mu$, $\nu$ be causally related probability measures on $\gamma$. Then any causal coupling $\pi_\gamma$ is optimal. 
\end{lemma}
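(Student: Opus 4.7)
The plan is to show that $\int c_L\, d\pi_\gamma$ takes the same value for every causal coupling $\pi_\gamma$ of $\mu$ and $\nu$. Since at least one optimal coupling exists by Proposition~\ref{dynoptcou}, this common value must be the infimum, and hence every causal coupling is automatically optimal.

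The key ingredient is the additivity of $c_L$ along a single geodesic: because $\gamma$ is a minimizer and so is every restriction $\gamma|_{[s,t]}$, for all $0\le s\le t\le u\le 1$ one has
$$c_L(\gamma(s),\gamma(u)) = c_L(\gamma(s),\gamma(t))+c_L(\gamma(t),\gamma(u)).$$
The inequality $\le$ is the usual $c_L$-triangle inequality used already in the proof of Lemma~\ref{DKP-interpolation}; equality is realized by $\gamma$ itself. Moreover, since $(M,L)$ is globally hyperbolic and hence causal, the image $\gamma([0,1])$ carries no causal loop, so for points $x=\gamma(s),\,y=\gamma(t)\in\gamma([0,1])$ the relation $y\in J^+(x)$ forces $s\le t$. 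Consequently any causal coupling $\pi_\gamma$ is concentrated on $\{(\gamma(s),\gamma(t))\mid 0\le s\le t\le 1\}$.

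I would then introduce the potential
$$\phi\colon \gamma([0,1])\to\R,\qquad \phi(\gamma(t)):=c_L(\gamma(0),\gamma(t)).$$
By additivity, $c_L(\gamma(s),\gamma(t))=\phi(\gamma(t))-\phi(\gamma(s))$ whenever $s\le t$, and $\phi$ is continuous and therefore bounded on the compact set $\gamma([0,1])$; in particular $\phi\in L^1(\mu)\cap L^1(\nu)$. For any causal coupling $\pi_\gamma$ this gives
$$\int c_L\,d\pi_\gamma=\int\phi(y)\,d\nu(y)-\int\phi(x)\,d\mu(x),$$
and the right-hand side depends only on $\mu$ and $\nu$, proving the claim.

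The only point that might look delicate is the lightlike case, where $c_L\equiv 0$ on $\gamma([0,1])\times\gamma([0,1])$ and the potential $\phi$ vanishes identically; in that situation every causal coupling has cost $0$ and the statement is trivial, so this is not really an obstacle. The whole argument is really a transcription of the fact that on a single minimizer the cost collapses to a telescoping difference, and hence the transport problem along $\gamma$ is degenerate.
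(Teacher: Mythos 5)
Your proof is correct and rests on the same observation as the paper's: along a single minimizer the cost is additive, so the cost of any causal coupling telescopes and depends only on the marginals. The paper phrases this by passing to an affine parameter, so that $c_L$ becomes $c\,(t-s)$ for $s\le t$, and invoking cyclical monotonicity, while you package the same fact into the potential $\phi(\gamma(t))=c_L(\gamma(0),\gamma(t))$ and conclude directly that all causal couplings have equal cost; the difference is only in presentation.
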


\begin{proof}
Choose a monotone reparameterization $[0,1]\to[0,1]$ of $\gamma$ to an affine parameter. The cost function for $s,t\in [0,1]$ then is 
$$c_L(s,t)=\begin{cases} c(t-s),\text{ if }s\le t,\\ \infty\text{ else}\end{cases}$$
for some constant $c=c(\gamma)\le 0$. It is now easy to see that any causal coupling is cyclically monotone, i.e. optimal.
\end{proof}

By Lusin's Theorem one can assume that $x\mapsto (t_x,r_x)$ is continuous. Set 
$$m(x,a):=t_x(\tau^{-1}(-\infty,a])$$ 
and 
$$n(x,b):=r_x(\tau^{-1}(-\infty,b]).$$ 
With this define $\phi(x,a)=b$ if $b=\text{argmin}\{m(x,a)\le n(x,b)\}$. 
Observe that $\phi$ is measurable and $(T,\tau)$ is injective on $\dom(R_{red})$ 
so that there is a measurable map $\psi:\dom(R_{red})\to M$  such that
$\psi(y) = \phi(T(y),\tau(y))$ for $\mu_{red}$-almost all $y\in \dom(R_{red})$. 

Again by Lusin's Theorem one may assume $\psi$ is continuous. Define a set 
$\mathcal{T}\subset M\times M$ as follows
$$
\mathcal{T} = \{ (y,z) ~|~ (y,z)\in R, \psi(y)=\tau(z) \}.
$$
Note that $\mathcal{T}$ is analytic and for each $x\in \dom(R_{red})$ there
is exactly one $(x,y)\in \mathcal{T}$. Thus $\mathcal{T}$ agrees on $\dom(R_{red})\times M$
with the graph of a measurable function $\Psi : \dom(R_{red}) \to M$. 

The choice of $\phi$ implies $\Psi_\sharp t_x = r_x$. Thus
$(\operatorname{id}\times\Psi)_\sharp \mu $ is a coupling of $\mu$ and $\nu$.
Since $\Psi$ transports monotonously along each $R_x$
one sees that $\Psi$ is an optimal transport map between $t_x$ and $r_x$. As the 
initial coupling was optimal, we see that along each transport ray the cost is not change
In particular, the coupling $(\operatorname{id}\times\Psi)_\sharp \mu $ is optimal between $\mu$ and $\nu$. 
This finishes the proof of Theorem \ref{thm5}.

\end{document}